\newlength{\modCD@rowsep}
\renewenvironment{CD}[1][]
 {\modCD@rowsep=20\ex@ 
  \setkeys{modCD}{#1}%
  \CDat
  \bgroup\relax\let\ampersand@&\iffalse}\fi
\newtheorem{theorem}{Theorem}[section]
\newtheorem{lemma}[theorem]{Lemma}
\theoremstyle{definition}
\newtheorem{definition}[theorem]{Definition}
\newtheorem{example}[theorem]{Example}
\newtheorem{proposition}[theorem]{Proposition}
\theoremstyle{remark}
\newtheorem{remark}[theorem]{Remark}
\numberwithin{equation}{section}
\begin{document}

\title{Quasi-Elliptic Cohomology and its Power Operations}



\author{Zhen Huan}

\address{Zhen Huan, Department of Mathematics,
Sun Yat-sen University, Guangzhou, 510275 China} \curraddr{} \email{huanzhen@mail.sysu.edu.cn}
\thanks{The author was partially supported by NSF grant DMS-1406121.}


\subjclass[2010]{Primary 55}

\date{September 20, 2016}

\begin{abstract}
Quasi-elliptic cohomology is a variant of
Tate K-theory. It is the orbifold K-theory of a space of constant 
loops. For global quotient orbifolds, it can be expressed in terms
of equivariant K-theories. In this paper we show how this theory
is equipped with power operations. We also prove that the Tate
K-theory of symmetric groups modulo a certain transfer ideal
classify the finite subgroups of the Tate curve.
\end{abstract}

\maketitle 

\section{Introduction}

An elliptic cohomology theory is an even periodic multiplicative
generalized cohomology theory whose associated formal group is the
formal completion of an elliptic curve. It is an old idea of
Witten, as shown in \cite{LProceeding}, that the elliptic
cohomology of  a space $X$ is related to the
$\mathbb{T}-$equivariant K-theory of the free loop space
$LX=\mathbb{C}^{\infty}(S^1, X)$ with the circle $\mathbb{T}$
acting on $LX$ by rotating loops.

It is surprisingly difficult to make this precise, especially if
one wishes to consider equivariant generalization of this
construction. In this case the loop space $LX$  with the natural
rotation action is a rich orbifold. In this paper we offer a new
formulation between the loop space and Tate K-theory via a new
theory which we call quasi-elliptic cohomology.

Tate K-theory is the generalized elliptic cohomology associated to
the Tate curve. The Tate curve $Tate(q)$ is an  elliptic curve
over Spec$\mathbb{Z}((q))$, which is classified as the completion
of the algebraic stack of some nice generalized elliptic curves at
infinity. A good reference for $Tate(q)$ is Section 2.6 of
\cite{AHS}. We give a sketch of it in Section \ref{tatecurve}. The
relation between Tate K-theory and string theory is better
understood than for most known elliptic cohomology theories. The
definition of $G-$equivariant Tate K-theory for finite groups $G$
is modelled on the loop space of a global quotient orbifold, which
is formulated explicitly in Section 2, \cite{Gan07}. Its relation with string theory and loop space makes Tate K-theory itself a distinctive subject to study.

The idea of quasi-elliptic cohomology is  motivated by Ganter's
construction of Tate K-theory. 
It is not an elliptic cohomology  but from it we can recover the
Tate K-theory. This new theory can be interpreted in a neat form
by equivariant K-theories, which makes many constructions on it
easier and more natural than those on the Tate K-theories. Some
formulations can be generalized to other equivariant cohomology
theories. In addition, quasi-elliptic cohomology provides a method
that reduces facts such as the classification of geometric structures on the Tate curve into questions in representation theory.


\subsection{Loop Space}

Quasi-elliptic cohomology is  modelled on a version of equivariant
loop space. For background on orbifolds and Lie groupoids, we
refer the readers to Section 2, 3, \cite{LerStack} and
\cite{Moe02}.

For any  compact Lie group $G$ and a manifold $X$ with a smooth
$G-$action, there is a Lie groupoid $X/\!\!/G$ which is explained
in detail in Chapter 11, \cite{BTG}.
Smooth unbased loops in the orbifold $X/\!\!/G$ carries a lot of
structure: on the one hand, it includes loops represented by
smooth maps $\gamma: \mathbb{R}\longrightarrow X$ such that
$\gamma(t+1)=\gamma(t)g$ for some $g\in G$; other than the group
action by the loop group $LG:=\mathbb{C}^{\infty}(S^1, G)$, the
loop space also has the circle action by rotation. Lerman
discussed thoroughly in Section 3, \cite{LerStack} that the strict
2-category of Lie groupoids can be embedded into a weak 2-category
whose objects are Lie groupoids, 1-morphisms are bibundles and
2-morphisms equivariant diffeomorphisms between bibundles. Thus,
the free loop space of an orbifold $M$ is 
the category of bibundles from the trivial groupoid
$S^1/\!\!/\ast$ to the Lie groupoid $M$. We will write
$$Loop_1(X/\!\!/G):=Bibun(S^1/\!\!/\ast, X/\!\!/G),$$ which
is discussed in Definition \ref{loopspacemorphism}.
In Definition \ref{loopext3space}, we extend $Loop_1(X/\!\!/G)$ to
a groupoid $Loop_1^{ext}(X/\!\!/G)$ by adding rotations as
morphisms.

Especially we are interested in the ghost loops groupoid
$GhLoop(X/\!\!/G)$, which is defined to be the full subgroupoid of
$Loop^{ext}_1(X/\!\!/G)$ consisting of objects $(\pi, f)$ with the
image of $f$ contained in a single $G-$orbit. Ghost loops are
introduced by Rezk in his unpublished manuscript \cite{Rez16}. Another reference is Section 2.1.3, \cite{Huanthesis}.
This groupoid has several good properties. They are computed
locally in $X$. For instance, if $X=U\cup V$ where $U$ and $V$ are
$G-$invariant open subsets, then
$$GhLoop(X/\!\!/G)\cong GhLoop(U/\!\!/G)\cup_{GhLoop((U\cap
V)/\!\!/G)} GhLoop(V/\!\!/G).$$ So it satisfies a kind of
Mayer-Vietoris property. In addition, if $H$ is a closed subgroup
of $G$ and $X$ is the quotient space $G/H$, $GhLoop(X/\!\!/G)$ is
equivalent to $GhLoop(\mbox{pt}/\!\!/H)$. In other words, it has
the change-of-group property.

When $G$ is finite, $GhLoop(X/\!\!/G)$ is isomorphic to the full
subgroupoid $\Lambda(X/\!\!/G)$ of $Loop^{ext}_1(X/\!\!/G)$
consisting of constant loops. This groupoid $\Lambda(X/\!\!/G)$
can be regarded as an extended version of the inertia groupoid
$I(X/\!\!/G)$. Please see Definition \ref{inertiagroupoid} for
inertia groupoid.



\subsection{Quasi-elliptic cohomology}

For any compact orbifold groupoid $\mathbb{G}$, the orbifold
K-theory $K_{orb}(\mathbb{G})$ is defined to be the Grothendieck
ring of isomorphism classes of $\mathbb{G}-$vector bundles on
$\mathbb{G}$. In particular, $K_{orb}(X/\!\!/G)$ is $K_G(X)$. A
reference for orbifold K-theory is Chapter 3, \cite{ALRuan} and a
reference for equivariant K-theory is \cite{SegalequiK}.

Quasi-elliptic cohomology $QEll^*(X/\!\!/G)$ is defined to be the
orbifold K-theory of a subgroupoid $\Lambda(X/\!\!/G)$ of
$GhLoop(X/\!\!/G)$ consisting of constant loops.
When $G$ is a finite group,  $QEll^*_{G}(X)$ can be expressed in
terms of the equivariant K-theory of $X$ and its subspaces as
\begin{equation}QEll^*_G(X):=K_{orb}(GhLoop(X/\!\!/G))\cong \prod_{\sigma\in
G_{conj}}K^*_{\Lambda_G(\sigma)}(X^{\sigma})=\bigg(\prod_{\sigma\in
G}K^*_{\Lambda_G(\sigma)}(X^{\sigma})\bigg)^G,\label{defintro}\end{equation}
where $G_{conj}$ is a set of representatives of $G-$conjugacy
classes in $G$. The group $\Lambda_G(\sigma):=
C_G(\sigma)\times\mathbb{R}/\langle(\sigma, -1)\rangle$ acts on
the fixed point space $X^{\sigma}$ by $[g, t]\cdot m=g\cdot m$.
 In a coming paper by the author \cite{Huancoming}, we will present the construction of
$QEll^*_{G}(X)$ for any compact Lie group $G$.

$QEll_G(X)$ has the structure of a $\mathbb{Z}[q^{\pm}]-$algebra.
We have
\begin{equation}QEll^*_G(X)\otimes_{\mathbb{Z}[q^{\pm}]}\mathbb{Z}((q))=(K^*_{Tate})_G(X).
\label{tateqellequiv}\end{equation}

We formulate the K$\ddot{u}$nneth map, restriction map, change of
group isomorphism and transfer for $QEll$. In general, if $H^*$ is
an equivariant cohomology theory, then the functor $$X/\!\!/G
\mapsto H^*(GhLoop(X/\!\!/G))$$ gives a new equivariant cohomology
theory. Moreover, for each global cohomology theory, we can
formulate a new global cohomology theory via the ghost loops.

\subsection{Power operation}

One significant feature of quasi-elliptic cohomology is that it
has power operations, which was first observed by
Ganter, as shown in \cite{Gan07} and \cite{Gan13}. 
In Section \ref{poweroperation} we construct the total power
operation of quasi-elliptic cohomology. It satisfies the axioms
for equivariant power operations that Ganter gave in Definition
4.3 in \cite{Gan06}. For more details, please see Theorem
\ref{main1p}.

The power operation $\{\mathbb{P}_n\}_{n\geq 0}$ mixes the power
operation in $K-$theory with the natural operations of dilating
and rotating loops. The key point of the construction of the power
operation is an intermediate groupoid $d_{(\underline{g},
\sigma)}(X)$ with $(\underline{g}, \sigma)\in G\wr\Sigma_n$. It is
constructed from $\Lambda(X/\!\!/G)$ and isomorphic to $(X^{\times
n})^{(\underline{g}, \sigma)}/\!\!/
\Lambda_{G\wr\Sigma_n}(\underline{g}, \sigma)$.  
For more details of the construction, please see Section
\ref{s2complete}.

We illustrate what this power operation looks like by examples.
Let $G$ be the trivial group and $X$ a space. Let $(-)_k$ denote
the rescaling map defined in (\ref{lpok}).

When $n=2$, $\mathbb{P}_{(\underline{1}, (1)(1))}(x)=x\boxtimes x$
and $\mathbb{P}_{(\underline{1}, (12))}(x)=(x)_2$.

When $n=3$, $\mathbb{P}_{(\underline{1}, (1)(1)(1))}(x)=x\boxtimes
x\boxtimes x$, $\mathbb{P}_{(\underline{1},
(12)(1))}(x)=(x)_2\boxtimes x$, and $\mathbb{P}_{(\underline{1},
(123))}(x)=(x)_3$.

In these cases, the number of factors corresponds to the number of
cycles in the permutation and the rescaling map corresponds to the
length of each cycle. For more examples please see Example
\ref{pointsymmpower}.

For any equivariant cohomology theory $\{H^*_G( -)\}_G$ with an
$H_{\infty}$-structure in Ganter's sense, we can formulate a power
operation for the equivariant cohomology theories
$$\mathbb{H}^*_G(-):=\prod_{\sigma \in G_{conj}}
H^*_{\Lambda_G(\sigma)}(-)^{\sigma}$$ in the same way.

In addition, we can formulate the total power operation for the
orbifold quasi-elliptic cohomology in the sense of Definition 3.9,
\cite{Gan13}. The construction of the power operation is shown in
Section \ref{s2}.

\subsection{Classification of the finite subgroups of the Tate curve}

Though the general formulas for the power operations in $QEll_G$
are complicated, to understand it, it is useful to consider
special cases. It is already illuminating to consider the case
that $X$ is a point and $G$ is the trivial group, the power
operation has a neat form, as shown in Example
\ref{pointsymmpower}. It has a natural interpretation in terms of
the Tate elliptic curve.

In Section \ref{proofstrict} applying the power operation we prove
that the Tate K-theory of symmetric groups modulo the transfer
ideal classifies the finite subgroups of the Tate curve, which is
analogous to the principal result in Strickland \cite{Str98} that
the Morava $E-$theory of the symmetric group $\Sigma_n$ modulo a
certain transfer ideal classifies the power subgroups of rank $n$
of the formal group $\mathbb{G}_E$.

The finite subgroups of the Tate curve are classified by
$$\prod_{d|N}\mathbb{Z}((q))[q']/\langle q^d-q'^{\frac{N}{d}}
\rangle.$$

First we prove the parallel conclusion for quasi-elliptic
cohomology. \begin{theorem} \label{strickqell}
\begin{equation}QEll^0_{\Sigma_N}(\mbox{pt})/\mathcal{I}^{QEll}_{tr}\cong
\prod_{d|N}\mathbb{Z}[q^{\pm}][q']/\langle q^d-q'^{\frac{N}{d}}
\rangle,\label{ellc}\end{equation} where $\mathcal{I}^{QEll}_{tr}$
is the transfer ideal defined in (\ref{transferidealqec}) and $q'$
is the image of $q$ under the power operation $\mathbb{P}_N$. 
\end{theorem}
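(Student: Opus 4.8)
The plan is to compute $QEll^0(\mathrm{pt}/\!\!/\Sigma_N)$ explicitly using the decomposition (\ref{defintro}), identify the transfer ideal inside it, and then match the quotient with the product over divisors. From (\ref{defintro}) with $X=\mathrm{pt}$ and $G=\Sigma_N$ we get
\begin{equation*}
QEll^0(\mathrm{pt}/\!\!/\Sigma_N)\cong\prod_{\sigma\in(\Sigma_N)_{conj}}R(\Lambda_{\Sigma_N}(\sigma)),
\end{equation*}
where $R(-)$ denotes the representation ring and $\Lambda_{\Sigma_N}(\sigma)=C_{\Sigma_N}(\sigma)\times\mathbb{R}/\langle(\sigma,-1)\rangle$. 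Conjugacy classes $\sigma$ in $\Sigma_N$ are cycle types, i.e.\ partitions of $N$. The key reduction is that $\Lambda_{\Sigma_N}(\sigma)$ splits according to the cycles of $\sigma$: if $\sigma$ has $a_i$ cycles of length $i$, then $C_{\Sigma_N}(\sigma)=\prod_i(\mathbb{Z}/i\wr\Sigma_{a_i})$, and the $\mathbb{R}$-direction couples to $\sigma$ only through the product of the cyclic factors. First I would therefore reduce to understanding, for each cycle length $i$, the ``local'' contribution, which is the representation ring of a group of the form $(\mathbb{Z}/i\wr\Sigma_{a_i})\times\mathbb{R}/\langle(\sigma,-1)\rangle$; here $q$ corresponds to the canonical character of the $\mathbb{R}/\mathbb{Z}$-quotient.

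Next I would pin down the transfer ideal $\mathcal{I}^{QEll}_{tr}$ from its definition (\ref{transferidealqec}): it is generated by transfers from proper subgroups $\Sigma_i\times\Sigma_{N-i}$ (or more precisely their $QEll$-analogues $\Lambda$-groups). Quotienting by transfers is, on each representation-ring factor, the operation of killing everything induced from proper Young subgroups; by the classical analysis (as in Strickland's treatment of $E^0(B\Sigma_N)/I_{tr}$) the surviving classes are concentrated on the ``single cycle type'' strata, i.e.\ partitions of $N$ into equal parts of some size $d\mid N$. Concretely, only the conjugacy classes $\sigma$ that are products of $N/d$ disjoint $d$-cycles survive modulo transfer, for each divisor $d\mid N$. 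So the plan is: (a) show every factor indexed by a partition with at least two distinct part sizes lies in the transfer ideal; (b) show that within the factor for a single-part-size partition (parts of size $d$), the transfer ideal cuts it down to exactly one copy of $\mathbb{Z}[q^{\pm}]$-algebra; (c) compute that surviving algebra.

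For step (c): the centralizer of $N/d$ disjoint $d$-cycles is $\mathbb{Z}/d\wr\Sigma_{N/d}$, and $\Lambda_{\Sigma_N}(\sigma)=(\mathbb{Z}/d\wr\Sigma_{N/d})\times\mathbb{R}/\langle(\sigma,-1)\rangle$. The element $(\sigma,-1)$ identifies the diagonal generator of the $(\mathbb{Z}/d)^{N/d}$ with a rotation by $1/d$; writing $q'$ for the image of $q$ under $\mathbb{P}_N$, which geometrically is the rotation parameter on the loop space pulled back along the $N$-fold cover, one finds $q'$ is a $d$-th root of $q^{N/d}$ on this stratum — equivalently $QEll^0$ restricted here is $\mathbb{Z}[q^{\pm}][q']/\langle q^d-q'^{N/d}\rangle$. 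I would verify this by tracking how $\mathbb{P}_N$ acts: on the point, $\mathbb{P}_N$ sends a line $L$ to $L^{\boxtimes N}$ with its permutation action, and the rotation eigenvalue is scaled; the rescaling maps $(-)_k$ from (\ref{lpok}) make this precise. Assembling (a)--(c) over all $d\mid N$ gives the product $\prod_{d\mid N}\mathbb{Z}[q^{\pm}][q']/\langle q^d-q'^{N/d}\rangle$.

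The main obstacle I anticipate is step (b) — showing the transfer ideal kills \emph{exactly} the right amount on the single-part-size strata, leaving precisely $\mathbb{Z}[q^{\pm}][q']/\langle q^d-q'^{N/d}\rangle$ and not something larger or smaller. This requires a careful bookkeeping of which transfers from $\Lambda$-analogues of Young subgroups land in which factors, and an induction on $N$ (or on the number of parts) to control the $\Sigma_{N/d}$-part of the wreath product; the $\mathbb{R}$-direction must be carried along throughout, since it is what distinguishes this from the purely $K$-theoretic statement. I expect the cleanest route is to first establish the analogous statement with $q$ inverted, deduce it is the image of Ganter's Tate $K$-theory computation (Section 5.4 of \cite{Gan07}) under (\ref{tateqellequiv}), and then check integrality/flatness to descend from $\mathbb{Z}((q))$ back to $\mathbb{Z}[q^{\pm}]$.
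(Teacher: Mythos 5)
Your overall decomposition matches the paper's proof: split conjugacy classes of $\Sigma_N$ into those with mixed cycle lengths (killed completely by transfer from $\Sigma_r\times\Sigma_{N-r}$) versus those with $d$ cycles all of length $e$, $N=de$, and then analyze the surviving piece on each uniform stratum. That much is right, and you correctly flag step (b) as the real content. However, you do not actually close the gap, and your suggested shortcut does not work in this context. The paper's step (b) requires two concrete ingredients you don't supply. First, one needs a basis for $R\bigl(\Lambda_{\Sigma_e}(12\cdots e)\wr_{\mathbb{T}}\Sigma_d\bigr)$ (Theorem \ref{repfibwr}, the fibered-wreath analogue of the James--Kerber description), together with an explicit isomorphism
\begin{equation*}
\mathrm{Ind}^{\Lambda_{\Sigma_e}\wr_{\mathbb{T}}\Sigma_d}_{\Lambda_{\Sigma_e}\wr_{\mathbb{T}}(\Sigma_i\times\Sigma_{d-i})}\bigl((q^{a/e})^{\otimes i}\otimes(q^{a/e})^{\otimes(d-i)}\otimes D_{\tau'}\bigr)\;\cong\;(q^{a/e})^{\otimes d}\otimes D_{\mathrm{Ind}^{\Sigma_d}_{\Sigma_i\times\Sigma_{d-i}}\tau'},
\end{equation*}
which, combined with Atiyah's observation that every nontrivial irreducible of $\Sigma_d$ is induced from a proper Young subgroup, shows that everything except the classes $\{(q^{a/e})^{\otimes d}\}_{a=0}^{e-1}$ lies in the transfer ideal. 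Second, one needs to verify that the ring map $\Psi:\mathbb{Z}[q^{\pm}][x]\to K_{\Lambda_{\Sigma_N}(\sigma)}(\mathrm{pt})/\mathcal{I}^{QEll}_{tr}$, $x\mapsto q'$, has kernel exactly $\langle q'^e-q^d\rangle$; the paper does this by evaluating characters on elements $[(a_1,\dots,a_d;\beta),t]$ and invoking the nonvanishing of a Vandermonde determinant in $e$-th roots of unity. Your proposal asserts the surviving algebra ``is'' $\mathbb{Z}[q^{\pm}][q']/\langle q^d-q'^{N/d}\rangle$ but offers no argument that the relation ideal is not larger.

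The alternative route you suggest — prove the statement over $\mathbb{Z}((q))$ using Ganter's Tate $K$-theory and then ``descend'' — is problematic for two reasons. Within this paper the Tate classification (\ref{tatec}) is \emph{deduced from} (\ref{ellc}), not the other way around, and Section 5.4 of \cite{Gan07} constructs a power operation landing in $K_{Tate}(\mathrm{pt}/\!\!/\Sigma_N)/I^{Tate}_{tr}$ but does not independently establish the classification isomorphism. More seriously, the base change $\mathbb{Z}[q^{\pm}]\to\mathbb{Z}((q))$ is \emph{not} faithfully flat (for instance $q-1$ becomes a unit in $\mathbb{Z}((q))$), so there is no general descent principle that would recover a $\mathbb{Z}[q^{\pm}]$-algebra isomorphism from a $\mathbb{Z}((q))$-algebra isomorphism without establishing an integral structure by hand — which is essentially the Vandermonde-type argument you are trying to avoid.
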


Then applying the relationship between $QEll^*$ and Tate K-theory,
we obtain the main theorem.
\begin{theorem}
The Tate K-theory of symmetric groups modulo the transfer ideal
$I^{Tate}_{tr}$  defined in (\ref{transferidealtatek}) classifies
finite subgroups of the Tate curve. Explicitly,
\begin{equation}(K^0_{Tate})_{\Sigma_N}(\mbox{pt})/I^{Tate}_{tr}\cong
\prod_{d|N}\mathbb{Z}((q))[q']/\langle q^d-q'^{\frac{N}{d}}
\rangle,\label{tatec}\end{equation} where $q'$ is the image of $q$
under the power operation $P^{Tate}$ constructed
in Definition 5.10, \cite{Gan07}. 

\end{theorem}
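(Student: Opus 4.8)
The plan is to deduce the theorem from Theorem~\ref{strickqell} by applying the base change $-\otimes_{\mathbb{Z}[q^{\pm}]}\mathbb{Z}((q))$ and invoking the identification \eqref{tateqellequiv}. The first point is that $\mathbb{Z}((q))=\mathbb{Z}[[q]][q^{-1}]$ is flat over $\mathbb{Z}[q^{\pm}]$: the $q$-adic completion of the Noetherian ring $\mathbb{Z}[q]$ is flat over it, localization preserves flatness, and $\mathbb{Z}[q^{\pm}]$ is itself a localization of $\mathbb{Z}[q]$; hence $-\otimes_{\mathbb{Z}[q^{\pm}]}\mathbb{Z}((q))$ is an exact functor, and in particular it commutes with finite products and with passage to a quotient ring by an ideal.

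Now I would apply $-\otimes_{\mathbb{Z}[q^{\pm}]}\mathbb{Z}((q))$ to the isomorphism \eqref{ellc}. On the right-hand side, since the product over $d\mid N$ is finite and
$$\mathbb{Z}[q^{\pm}][q']/\langle q^{d}-q'^{N/d}\rangle\ \otimes_{\mathbb{Z}[q^{\pm}]}\ \mathbb{Z}((q))\ \cong\ \mathbb{Z}((q))[q']/\langle q^{d}-q'^{N/d}\rangle,$$
the base change is exactly $\prod_{d\mid N}\mathbb{Z}((q))[q']/\langle q^{d}-q'^{N/d}\rangle$, the ring that was recalled above to classify the finite subgroups of $Tate(q)$. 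On the left-hand side, \eqref{tateqellequiv} gives $QEll^{0}(\mbox{pt}/\!\!/\Sigma_{N})\otimes_{\mathbb{Z}[q^{\pm}]}\mathbb{Z}((q))=K^{0}_{Tate}(\mbox{pt}/\!\!/\Sigma_{N})$, and by flatness
$$\big(QEll^{0}(\mbox{pt}/\!\!/\Sigma_{N})/\mathcal{I}^{QEll}_{tr}\big)\otimes_{\mathbb{Z}[q^{\pm}]}\mathbb{Z}((q))\ \cong\ K^{0}_{Tate}(\mbox{pt}/\!\!/\Sigma_{N})\,\big/\,\big(\mathcal{I}^{QEll}_{tr}\cdot K^{0}_{Tate}(\mbox{pt}/\!\!/\Sigma_{N})\big).$$
It then remains to check two compatibilities: that the extended ideal $\mathcal{I}^{QEll}_{tr}\cdot K^{0}_{Tate}(\mbox{pt}/\!\!/\Sigma_{N})$ coincides with the transfer ideal $I^{Tate}_{tr}$ of \eqref{transferidealtatek}, and that the class $q'$ of \eqref{ellc} is carried, under $-\otimes\mathbb{Z}((q))$, to the image of $q$ under $P^{Tate}$. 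The first holds because both transfer ideals are generated by images of transfer maps from proper subgroups, and the transfers of Tate K-theory are the base changes along \eqref{tateqellequiv} of those of $QEll$ (the same naturality that yields \eqref{tateqellequiv}); the second is precisely the statement, recorded after Proposition~\ref{adamsqell}, that $\overline{P^{string}}_{N}$ is the unique extension of $\overline{P}_{N}$ along \eqref{tateqellequiv}, so that $P^{Tate}$ is compatible with $\mathbb{P}_{N}$. Granting these, the isomorphism \eqref{tatec} together with its identification of $q'$ is just the base change of \eqref{ellc}.

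The main obstacle is the first of the two compatibilities: one must rule out that $I^{Tate}_{tr}$ is strictly larger than the ideal generated by the image of $\mathcal{I}^{QEll}_{tr}$. Concretely this amounts to verifying that transfers in $QEll$ and in $K_{Tate}$ sit in a commuting square with the base-change maps $QEll^{0}(-)\to K^{0}_{Tate}(-)$, so that the image of a $K_{Tate}$-transfer is the $\mathbb{Z}((q))$-span of the image of the corresponding $QEll$-transfer. Once that square is known to commute --- which follows from the construction of Tate K-theory via \eqref{tateqellequiv} and of the transfer in Section~5.4 of \cite{Gan07} --- nothing further is needed, since flatness has already handled the quotient and the finite product; the theorem is then immediate.
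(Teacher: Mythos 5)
Your route — deduce \eqref{tatec} from the quasi-elliptic case \eqref{ellc} by applying $-\otimes_{\mathbb{Z}[q^{\pm}]}\mathbb{Z}((q))$ and the identification \eqref{tateqellequiv} — is precisely what the paper's introduction sketches (``First we prove the parallel conclusion for quasi-elliptic cohomology\dots Then applying the relationship between $QEll^*$ and Tate K-theory\dots we obtain the main theorem''), and you have correctly isolated the two compatibilities that make the deduction go: that the transfer ideal $I^{Tate}_{tr}$ is the extended ideal of $\mathcal{I}^{QEll}_{tr}$, and that the class $q'$ in \eqref{ellc} passes to the image of $q$ under $P^{Tate}$. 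The paper itself proves only \eqref{ellc} in detail and disposes of \eqref{tatec} with the single sentence ``The proof of \eqref{tatec} is similar,'' so at the level of detail the paper offers you are on equal footing; the power-operation compatibility is indeed the content of the remark following Proposition~\ref{adamsqell} and of Remark~\ref{stringyrelation}, and the transfer compatibility follows from the parallel constructions in Section~\ref{inductionqell} and \cite{Gan07}, exactly as you say.

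One small point: the flatness of $\mathbb{Z}((q))$ over $\mathbb{Z}[q^{\pm}]$, while true by the argument you give, is not actually needed. You are applying $-\otimes_{\mathbb{Z}[q^{\pm}]}\mathbb{Z}((q))$ to an \emph{isomorphism} \eqref{ellc}, and any additive functor preserves isomorphisms; right-exactness of the tensor product already gives $(A/I)\otimes_R S\cong (A\otimes_R S)/\overline{I}$ with $\overline{I}$ the extended ideal, and tensor commutes with the finite product $\prod_{d\mid N}$ unconditionally. What carries all the weight is the identification $\overline{\mathcal{I}^{QEll}_{tr}}=I^{Tate}_{tr}$ together with $q'\mapsto P^{Tate}(q)$, which you have rightly flagged as the crux.
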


Moreover, via the isomorphism in Theorem \ref{strickqell}, we can
define a ring homomorphism
\begin{align*}\overline{P}_N: &QEll_{G}(X) \buildrel{\mathbb{P}_N}\over\longrightarrow QEll_{G\wr\Sigma_N}(X^{\times N})
\buildrel{res}\over\longrightarrow QEll_{G\times
\Sigma_N}(X^{\times N})\\  &\buildrel{diag^*}\over\longrightarrow
 QEll_{G\times\Sigma_N}(X) \cong QEll_G(X)\otimes_{\mathbb{Z}[q^{\pm}]} QEll_{\Sigma_N}(\mbox{pt}) \\
  &\longrightarrow QEll_G(X)\otimes_{\mathbb{Z}[q^{\pm}]} QEll_{\Sigma_N}(\mbox{pt})/\mathcal{I}^{QEll}_{tr},\end{align*}
 as shown in Proposition
\ref{adamsqell}. 
Under the identification (\ref{tateqellequiv}), it extends
uniquely to the ring homomorphism
$$\overline{P^{string}}_N: (K_{Tate})_G(X)\longrightarrow
(K_{Tate})_G(X)\otimes_{\mathbb{Z}((q ))}
(K_{Tate})_{\Sigma_N}(\mbox{pt})/I^{Tate}_{tr}$$ constructed in
Section 5.4, \cite{Gan07}. In \cite{HUgpT} we construct the
universal finite subgroup of the Tate curve via the operation
$\overline{P}_N$.

\subsection{Acknowledgement}
I would like to thank Charles Rezk who is always a wonderful
advisor and very inspiring teacher. Most of this work was guided
by him and it is a great experience to work with him. I would also
like to thank Matthew Ando. We had many mathematical discussions,
which are also important for my work.
At last, I would like to thank the editors and the referee for spending time on reading this work and
for their constructive and deep suggestion.

\section{Models for orbifold loops and ghost loops} \label{introduction}
To understand $QEll^*_G(X)$, it is essential to understand the
orbifold loop space. In this section, we will describe several
models for the loop space of $X/\!\!/G$. In Definition
\ref{loopspacemorphism} we discuss $Loop_1(X/\!\!/G)$ and
introduce another model $Loop_2(X/\!\!/G)$  in Definition
\ref{loopspace3}.

The groupoid structure of $Loop_1(X/\!\!/G)$ generalizes $Map(S^1,
X)/\!\!/G$, which is a subgroupoid of it. Other than the
$G-$action, we also
consider the rotation by the circle group $\mathbb{T}$ on the objects 
and form the  groupoids $Loop_1^{ext}(X/\!\!/G)$ and
$Loop_2^{ext}(X/\!\!/G)$. The groupoid $Loop_2^{ext}(X/\!\!/G)$
has a skeleton
$$\mathcal{L}(X/\!\!/G):=\coprod_{g}{_1}\mathcal{L}{_g}
X/\!\!/L^1_gG\rtimes\mathbb{T},$$ where each ${_1}\mathcal{L}{_g}
X =\mbox{Map}_{\mathbb{Z}/l\mathbb{Z}}(\mathbb{R}/l\mathbb{Z}, X)$
with $l$ the order of $g$ is equipped with an evident
$C_G(g)-$action. 
$\mathcal{L} (X/\!\!/G)$ has the same space of objects as the
groupoid $L(X/\!\!/G)$ discussed in Definition 2.3, \cite{LUloop},
from which equivariant Tate K-theory is defined. 
It has richer morphisms. The circle group $\mathbb{T}$ acts on
$\mathbb{R}/l\mathbb{Z}$ by rotation, and so in principle on the
orbifold ${_1}\mathcal{L}_gX$.

The key groupoid $\Lambda(X/\!\!/G)$ in the construction of
quasi-elliptic cohomology is the full subgroupoid of $\mathcal{L}
(X/\!\!/G)$ consisting of the constant loops.  In order to unravel
the relevant notations in the construction of $QEll^*_G(X)$, we
study the orbifold loop space 
in Section \ref{orbifoldloop} and Section \ref{ghost}.



In Section \ref{orbifoldspre} we define $Loop_1(X/\!\!/G)$. In
Section \ref{orbifoldloop} we interpret the enlarged groupoid
$Loop^{ext}_1(X/\!\!/G)$ and introduce a skeleton
$\mathcal{L}(X/\!\!/G)$ of it. In Section \ref{ghost} we show the
construction of quasi-elliptic cohomology by ghost loops. In
Section \ref{lambdarepresentationlemma} we show the representation
ring of $\Lambda_G(g)$. In Section \ref{orbqec} we introduce the
construction of quasi-elliptic cohomology first in terms of
orbifold K-theory and then equivariant K-theory. We show the
properties of the theory in Section \ref{propertiesqec}.


\subsection{Loop space} \label{loopspace}


\subsubsection{Bibundles}\label{orbifoldspre}

A standard reference for groupoids and bibundles is Section 2 and
3, \cite{LerStack}. For each pair of Lie groupoids $\mathbb{H}$
and $\mathbb{G}$,  the bibundles from $\mathbb{H}$ to
$\mathbb{G}$ are defined in Definition 3.25, \cite{LerStack}. The
category $Bibun(\mathbb{H}, \mathbb{G})$ has
 bibundles from $\mathbb{H}$ to $\mathbb{G}$ as the objects and bundle maps as the morphisms.

\begin{example}[$Bibun(S^1/\!\!/\ast, \ast/\!\!/G)$] According to the definition, a bibundle from $S^1/\!\!/\ast$
to $\ast/\!\!/G$ with $G$ a Lie group is a smooth manifold $P$
together with two maps $\pi: P\longrightarrow S^1$ a smooth
principal $G-$bundle and the constant map $r: P\longrightarrow
\ast$.  So a bibundle in this case is equivalent to a smooth
principal $G-$bundle over $S^1$. The morphisms in
$Bibun(S^1/\!\!/\ast, \ast/\!\!/G)$ are bundle isomorphisms.
\label{babyloop1}
 \end{example}

\begin{definition}[$Loop_1(X/\!\!/G)$]

Let $G$ be a Lie group acting smoothly on a manifold $X$. We use
$Loop_1(X/\!\!/G)$ to denote the category $Bibun(S^1/\!\!/\ast,
X/\!\!/G)$, which generalizes Example \ref{babyloop1}. Each object
consists of a smooth manifold $P$ and two structure maps
$P\buildrel{\pi}\over\longrightarrow S^1$ a smooth principal
$G-$bundle  and $f: P\longrightarrow X$ a $G-$equivariant map. We
use the same symbol $P$ to denote both the object and the smooth
manifold when there is no confusion. A morphism 
 is a $G-$bundle map $\alpha: P\longrightarrow P'$ making the
diagram below commute.
$$\xymatrix{S^1
&P \ar[l]_{\pi}\ar[d]^{\alpha}\ar[r]^{f} & X \\
&P' \ar[lu]^{\pi'}\ar[ru]_{f'} &}$$ Thus, the morphisms in
$Loop_1(X/\!\!/G)$ from $P$ to $P'$ are bundle isomorphisms.



\label{loopspacemorphism}\end{definition}

Only the $G-$action on $X$ is considered in $Loop_1(X/\!\!/G)$. We
add the rotations by adding more morphisms into the groupoid.

\begin{definition}[$Loop^{ext}_1(X/\!\!/G)$]\label{loopext3space} 
Let $Loop^{ext}_1(X/\!\!/G)$ denote the groupoid with the same
objects as $Loop_1(X/\!\!/G)$. Each morphism
consists of the pair $(t, \alpha)$ where $t\in\mathbb{T}$ is a
rotation and $\alpha$ is a $G-$bundle map. They make the diagram
below commute.
$$\xymatrix{S^1\ar[d]_{t}
&P \ar[l]_{\pi}\ar[d]_{\alpha}\ar[r]^{f} & X \\S^1 &P'
\ar[l]^{\pi'}\ar[ru]_{f'} &}$$

The groupoid $Loop_1(X/\!\!/G)$ is a subgroupoid of
$Loop^{ext}_1(X/\!\!/G)$.
\end{definition}

\subsubsection{Another model for orbifold loop space}\label{orbifoldloop}

We give an equivalent description of the groupoids discussed in
Section \ref{orbifoldspre}. The new models $Loop_2(X/\!\!/G)$ and
$Loop_2^{ext}(X/\!\!/G)$ are more practicable to compute. We give
a skeleton $\mathcal{L}(X/\!\!/G)$ of $Loop_2^{ext}(X/\!\!/G)$
when $G$ is finite in Proposition \ref{loop1equivske}.


\begin{definition}[$Loop_2(X/\!\!/G)$] \label{loopspace3}
Let $Loop_2(X/\!\!/G)$ denote the groupoid whose objects are
$(\sigma, \gamma)$ with $\sigma\in G$ and $\gamma:
\mathbb{R}\longrightarrow X$
a continuous map such that $\gamma(s+1)= \gamma(s)\cdot\sigma$, for any $s\in\mathbb{R}$. 
A morphism $\alpha: (\sigma, \gamma)\longrightarrow (\sigma',
\gamma')$ is a continuous map $\alpha: \mathbb{R}\longrightarrow
G$ satisfying $\gamma'(s)= \gamma(s)\alpha(s)$. Note that
$\alpha(s)\sigma'=\sigma\alpha(s+1)$, for any $s\in\mathbb{R}$.
\end{definition}

Moreover, we can extend the groupoid $Loop_2(X/\!\!/G)$ by adding
the rotations.

\begin{definition}[$Loop^{ext}_2(X/\!\!/G)$]\label{loopext3space2}

Let $Loop^{ext}_2(X/\!\!/G)$ denote the groupoid with the same
objects as $Loop_2(X/\!\!/G)$. A morphism $(\sigma,
\gamma)\longrightarrow (\sigma', \gamma')$ consists of the pair
$(\alpha, t)$ with $\alpha:\mathbb{R}\longrightarrow G$ a
continuous map and $t\in\mathbb{R}$ 
satisfying $\gamma'(s)=\gamma(s-t)\alpha(s-t)$. Note that
$(\alpha, t+1)$ and $(\alpha\sigma', t)$ are the same morphism and
each morphism can be represented by a pair $(\alpha, t)$ with
$t\in[0, 1)$.

$Loop_2(X/\!\!/G)$ is a subgroupoid of $Loop^{ext}_2(X/\!\!/G)$.

\end{definition}

\begin{lemma}The groupoid $Loop^{ext}_1(X/\!\!/G)$ is isomorphic to a full subgroupoid of $Loop^{ext}_2(X/\!\!/G)$.\end{lemma}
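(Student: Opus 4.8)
The plan is to construct an explicit fully faithful functor
$\Phi\colon Loop^{ext}_1(X/\!\!/G)\longrightarrow Loop^{ext}_2(X/\!\!/G)$
and identify its essential image. First I would recall that a smooth principal $G$-bundle $\pi\colon P\to S^1$ is classified, up to isomorphism, by a holonomy element: choosing a basepoint $1\in S^1\cong\mathbb{R}/\mathbb{Z}$ and a point $p_0\in\pi^{-1}(1)$, parallel transport along the standard path $[0,1]\to S^1$ trivializes the pullback of $P$ to $\mathbb{R}$, so that $P\cong \mathbb{R}\times G/{\sim}$ where $(s+1,\sigma^{-1}g)\sim(s,g)$ for a well-defined $\sigma\in G$ (up to conjugacy, the holonomy). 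Under this trivialization the $G$-equivariant map $f\colon P\to X$ is the same datum as a map $\gamma\colon\mathbb{R}\to X$ with $\gamma(s+1)=\gamma(s)\cdot\sigma$, which is exactly an object of $Loop_2(X/\!\!/G)$. So on objects $\Phi$ sends $\{S^1\xleftarrow{\pi}P\xrightarrow{f}X\}$ to $(\sigma,\gamma)$ after such a choice; different choices of $(p_0,\text{trivialization})$ give isomorphic objects in $Loop^{ext}_2$, so I would fix a choice for each isomorphism class (or, more cleanly, note that any two choices are canonically related by a morphism in $Loop_2$, making $\Phi$ well-defined up to the groupoid structure).

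Next I would check $\Phi$ is fully faithful. A morphism $(t,\alpha)\colon P\to P'$ in $Loop^{ext}_1$ consists of a rotation $t\in\mathbb{T}$ and a $G$-bundle isomorphism $\alpha$ covering rotation-by-$t$ and intertwining $f,f'$. Under the trivializations, such an $\alpha$ is determined by a smooth map $\mathbb{R}\to G$; writing out the compatibility with the bundle projections gives the relation $\gamma'(s)=\gamma(s-t)\alpha(s-t)$ and the twisting relation $\alpha(s)\sigma'=\sigma\alpha(s+1)$, which is precisely the data of a morphism $(\alpha,t)$ in $Loop^{ext}_2(X/\!\!/G)$ (with $t$ taken in $[0,1)$ using the identification $(\alpha,t+1)\sim(\alpha\sigma',t)$). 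Conversely every $Loop^{ext}_2$-morphism between objects in the image arises this way, so $\Phi$ is bijective on Hom-sets. I would present this as a pair of mutually inverse assignments at the level of morphisms, leaving the (routine but slightly fiddly) verification that the bundle-map axioms of Definition \ref{bibundlemap} translate exactly into the cocycle identities of Definition \ref{loopext3space2}.

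Finally, $\Phi$ is injective on objects by construction and its essential image is a full subgroupoid; in fact, since every object of $Loop^{ext}_2(X/\!\!/G)$ of the given form is hit (the construction $P\rightsquigarrow(\sigma,\gamma)$ is reversible: from $(\sigma,\gamma)$ build $P=(\mathbb{R}\times G)/\langle(s+1,g)\sim(s,\sigma g)\rangle$ with $f[s,g]=\gamma(s)g$), $\Phi$ is essentially surjective when $G$ is discrete, but for general topological $G$ the image is the full subgroupoid on those $(\sigma,\gamma)$ whose associated bundle is smoothly trivializable over $\mathbb{R}$ — always true — so the statement is the claim that the image is \emph{full}, which we have just checked. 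I expect the main obstacle to be purely bookkeeping: tracking the rotation parameter $t$ consistently through the two conventions (especially the identification $(\alpha,t+1)\sim(\alpha\sigma',t)$ versus the $\mathbb{T}=\mathbb{R}/\mathbb{Z}$ normalization in $Loop^{ext}_1$) so that composition of morphisms is strictly preserved, and making the choice of trivialization canonical enough that $\Phi$ is an honest functor rather than merely a pseudofunctor; both are handled by fixing, once and for all, the basepoint $1\in S^1$ and the standard lift $[0,1]\to\mathbb{R}$.
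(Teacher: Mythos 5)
Your construction is the same as the paper's: trivialize the bundle over $\mathbb{R}$, extract the holonomy element $\sigma$ and the equivariant path $\gamma$, and check fully faithfulness by matching bundle-isomorphism data against the twisted-loop cocycle conditions. You are actually more careful than the paper in flagging the dependence on a choice of trivialization — the paper writes $\gamma(s):=f([s,e])$, which only typechecks once a representative $P_\sigma=(\mathbb{R}\times G)/\!\!\sim$ has been fixed, and you make that choice explicit. So on the main point the proposal matches the paper.

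Where you go wrong is the discussion of essential surjectivity. You claim that "$\Phi$ is essentially surjective when $G$ is discrete" and that for general $G$ the only constraint is smooth trivializability of the associated bundle over $\mathbb{R}$, which "is always true," suggesting the functor is in fact essentially surjective. The paper says the opposite — $F$ is \emph{not} essentially surjective — and it is right, because the two sides live at different levels of regularity. $Loop^{ext}_1(X/\!\!/G)$ is built from \emph{smooth} principal $G$-bundles and \emph{smooth} $G$-equivariant maps $f\colon P\to X$ (it sits inside the 2-category of Lie groupoids), whereas Definition \ref{loopspace3} defines $Loop_2(X/\!\!/G)$ for a topological group acting continuously, with $\gamma\colon\mathbb{R}\to X$ merely \emph{continuous}. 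A continuous-but-nonsmooth $\gamma$ satisfying $\gamma(s+1)=\gamma(s)\sigma$ has no preimage under your $\Phi$, regardless of whether $G$ is discrete or not and regardless of bundle trivializability; the reversal $(\sigma,\gamma)\rightsquigarrow(P,f)$ you propose produces an $f$ that is only continuous, hence not an object of $Loop^{ext}_1$. This does not affect the truth of the lemma — a fully faithful functor that is injective on objects realizes its source as a full subgroupoid of the target whether or not it is essentially surjective — but the characterization of the essential image, and the claim that it is all of $Loop^{ext}_2$, is incorrect.
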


\begin{proof}
Define a functor  $$F: Loop^{ext}_1(X/\!\!/G)\longrightarrow
Loop^{ext}_2(X/\!\!/G)$$ by sending an object
$$\begin{CD} S^1 @<{\pi}<< P @>{f}>> X\end{CD}$$ to $(\sigma,
\gamma)$ with $\gamma(s):= f([s, e])$ and
$\sigma=\gamma(0)^{-1}\gamma(1)$ and sending a morphism
$$\xymatrix{S^1\ar[d]_{t} &P \ar[l]_{\pi}\ar[d]^F\ar[r]^{f} & X
\\ S^1 &P' \ar[l]^{\pi'}\ar[ru]_{f'} &}$$ to $(\alpha, t): (\sigma,
\gamma)\longrightarrow (\sigma', \gamma')$ with $\alpha(s):=F([s,
e])^{-1}.$

$F$ is a fully faithful functor but not essentially surjective.

\end{proof}

Therefore, the groupoid $Loop^{ext}_2(X/\!\!/G)$ contains all the
information of $Loop^{ext}_1(X/\!\!/G)$. Next we will show a
skeleton of this larger groupoid when $G$ is finite. Before that,
we introduce some symbols.

Let $k\geq 0$ be an integer and $g$ an element in the compact Lie
group $G$. Let $L^k_{g}G$ denote the twisted loop group
\begin{equation}
\{\gamma: \mathbb{R}\longrightarrow
G|\gamma(s+k)=g^{-1}\gamma(s)g\}.
\end{equation}
The multiplication of it is defined by
\begin{equation}(\delta\cdot\delta')(t)=\delta(t)\delta'(t)\mbox{,     for      any     }\delta, \delta'\in
L^k_{g}G,  \mbox{    and     }t\in\mathbb{R}.\end{equation} The
identity element $e$ is the constant map sending all the real
numbers to the identity element of $G$. We extend this group by
adding the rotations. Let $L^k_{g}G\rtimes\mathbb{T}$ denote the
group with elements $(\gamma, t)$, $\gamma\in L^k_{g}G$ and
$t\in\mathbb{T}$. The multiplication is defined by
\begin{equation} (\gamma, t)\cdot (\gamma', t'):= (s\mapsto \gamma(s) \gamma'(s+t), t+t').\label{lkgtmulti}\end{equation}

The set of constant maps $\mathbb{R}\longrightarrow G$ in
$L^k_{g}G$ is a subgroup of it, i.e. the centralizer $C_G(g)$.
When $G$ is finite, $L^k_{g}G=C_G(g)$.

\bigskip

When $G$ is finite, the objects of $Loop_2(X/\!\!/G)$ can be
identified with the space
$$\coprod\limits_{g\in G}{_1}\mathcal{L}{_g} X$$ where
\begin{equation}{_k}\mathcal{L}{_g}
X:=\mbox{Map}_{\mathbb{Z}/l\mathbb{Z}}(\mathbb{R}/kl\mathbb{Z},
X),\label{chongL}\end{equation} and $l$ is the order of the
element $g$. The cyclic group $\mathbb{Z}/l\mathbb{Z}$ is
isomorphic to the subgroup $k\mathbb{Z}/kl\mathbb{Z}$ of
$\mathbb{R}/kl\mathbb{Z}$. The isomorphism
$\mathbb{Z}/l\mathbb{Z}\longrightarrow k\mathbb{Z}/kl\mathbb{Z}$
sends the generator $[1]$ corresponding to $1$ to the generator
$[k]$ of $k\mathbb{Z}/kl\mathbb{Z}$ corresponding to $k$.
$k\mathbb{Z}/kl\mathbb{Z}$ acts on $\mathbb{R}/kl\mathbb{Z}$ by
group multiplication. Thus, via the isomorphism,
$\mathbb{Z}/l\mathbb{Z}$ acts on $\mathbb{R}/kl\mathbb{Z}$.
$\mathbb{Z}/l\mathbb{Z}$ is also isomorphic to the cyclic group
$\langle g\rangle$ by identifying the generater $[1]$ with $g$. So
it acts on $X$ via the $G-$action on it.

${_1}\mathcal{L}{_g} X/\!\!/ L^1_gG$ is a full subgroupoid of
$Loop_2(X/\!\!/G)$. Moreover, ${_1}\mathcal{L}{_g} X/\!\!/
L^1_gG\rtimes\mathbb{T}$ is
a full subgroupoid of $Loop^{ext}_2(X/\!\!/G)$ where  
$L^k_{g}G\rtimes\mathbb{T}$ acts on ${_k}\mathcal{L}{_g} X$ by
\begin{equation}\delta \cdot(\gamma, t):= (s\mapsto
\delta(s+t)\cdot\gamma(s+t)), \mbox{  for any }(\gamma, t)\in
L^k_gG\rtimes\mathbb{T},\mbox { and    }\delta \in
{_k}\mathcal{L}{_g} X.\label{chongaction}\end{equation} The action
by $g$ on ${_k}\mathcal{L}{_g} X$ coincides with that by
$k\in\mathbb{R}$. So we have the isomorphism
\begin{equation}
L^k_gG\rtimes\mathbb{T}=L_g^k G\rtimes \mathbb{R}/\langle
(\overline{g}, -k)\rangle,\label{bengkui}
\end{equation} where $\overline{g}$ represents the constant loop
$\mathbb{T}\longrightarrow \{g\}\subseteq G$.

In fact we have already proved Proposition \ref{loop1equivske}.
\begin{proposition}
Let $G$ be a finite group. The groupoid
$$\mathcal{L}(X/\!\!/G):=\coprod_{[g]}{_1}\mathcal{L}{_g}
X/\!\!/L^1_gG\rtimes\mathbb{T}$$ is a skeleton of
$Loop^{ext}_2(X/\!\!/G)$, where the coproduct goes over conjugacy
classes in $\pi_0G$.   \label{loop1equivske}
\end{proposition}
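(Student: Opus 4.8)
The plan is to identify $\mathcal{L}(X/\!\!/G)$ with a full subgroupoid of $Loop^{ext}_2(X/\!\!/G)$ and to check that the inclusion is essentially surjective; a full, faithful, essentially surjective functor between topological groupoids is an equivalence, which is the sense in which $\mathcal{L}(X/\!\!/G)$ is a skeleton. Much of the bookkeeping is already in place. When $G$ is finite the objects of $Loop^{ext}_2(X/\!\!/G)$ form the space $\coprod_{\sigma\in G}{_1}\mathcal{L}{_\sigma}X$: given an object $(\sigma,\gamma)$, the element $\sigma$ has some finite order $l$, so $\gamma(s+l)=\gamma(s)\sigma^l=\gamma(s)$ and $\gamma$ descends to the corresponding $\mathbb{Z}/l\mathbb{Z}$-equivariant map in ${_1}\mathcal{L}{_\sigma}X=\mathrm{Map}_{\mathbb{Z}/l\mathbb{Z}}(\mathbb{R}/l\mathbb{Z},X)$, and conversely. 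Moreover each ${_1}\mathcal{L}{_g}X/\!\!/L^1_gG\rtimes\mathbb{T}$, with $L^1_gG\rtimes\mathbb{T}$ acting as in (\ref{chongaction}), is a full subgroupoid of $Loop^{ext}_2(X/\!\!/G)$. So it remains to prove: (A) for conjugacy class representatives $g\ne g'$ there are no morphisms of $Loop^{ext}_2(X/\!\!/G)$ between objects of ${_1}\mathcal{L}{_g}X$ and objects of ${_1}\mathcal{L}{_{g'}}X$, so that the coproduct over conjugacy classes is itself a full subgroupoid; and (B) every object of $Loop^{ext}_2(X/\!\!/G)$ is isomorphic to one lying in $\mathcal{L}(X/\!\!/G)$.

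For (A), let $(\alpha,t)\colon(\sigma,\gamma)\to(\sigma',\gamma')$ be a morphism of $Loop^{ext}_2(X/\!\!/G)$. Since $\mathbb{R}$ is connected and $G$ is finite, hence discrete, the continuous map $\alpha$ is constant, say $\alpha\equiv h$; then from $\gamma'(s)=\gamma(s-t)h$, $\gamma(s+1)=\gamma(s)\sigma$ and the requirement $\gamma'(s+1)=\gamma'(s)\sigma'$ (the compatibility recorded in Definition \ref{loopspace3}) one finds $h\sigma'=\sigma h$, i.e. $\sigma'=h^{-1}\sigma h$. Thus a morphism can only join objects whose twisting elements are conjugate, and (A) follows since distinct representatives are non-conjugate. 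The same computation, specialized to $\sigma=\sigma'=g$, shows $h\in C_G(g)=L^1_gG$ and, after reparametrizing $t\mapsto -t$ and using the identification (\ref{bengkui}) of $L^1_gG\rtimes\mathbb{T}$ with $L^1_gG\rtimes\mathbb{R}/\langle(\overline g,-1)\rangle$, that the morphisms of $Loop^{ext}_2(X/\!\!/G)$ among objects of ${_1}\mathcal{L}{_g}X$ are exactly those supplied by the action (\ref{chongaction}). Hence $\mathcal{L}(X/\!\!/G)$ is precisely the full subgroupoid of $Loop^{ext}_2(X/\!\!/G)$ spanned by the objects $(g,\gamma)$ with $g$ running over conjugacy class representatives, and in particular the inclusion $\iota\colon\mathcal{L}(X/\!\!/G)\hookrightarrow Loop^{ext}_2(X/\!\!/G)$ is fully faithful.

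For (B), given an object $(\sigma,\gamma)$ choose $h\in G$ with $h^{-1}\sigma h=g$, the chosen representative of the conjugacy class of $\sigma$, and set $\gamma'(s)=\gamma(s)h$. Then $(\alpha\equiv h,\,0)$ is a morphism $(\sigma,\gamma)\to(g,\gamma')$, necessarily an isomorphism since $Loop^{ext}_2(X/\!\!/G)$ is a groupoid, and $(g,\gamma')$ lies in $\mathcal{L}(X/\!\!/G)$. So $\iota$ is essentially surjective, hence an equivalence, and, all the identifications above being visibly continuous, an equivalence of topological groupoids; this exhibits $\mathcal{L}(X/\!\!/G)$ as a skeleton of $Loop^{ext}_2(X/\!\!/G)$. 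The only slightly delicate point is in (A): matching the relation $(\alpha,t+1)\sim(\alpha\sigma',t)$ on morphisms of $Loop^{ext}_2(X/\!\!/G)$ with the quotient $\langle(\overline g,-1)\rangle$ in (\ref{bengkui}) and reconciling the rotation-parameter conventions in (\ref{chongaction}); the connectedness argument forcing $\alpha$ to be constant and the conjugation trick in (B) are both routine.
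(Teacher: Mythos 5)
Your argument is correct, and it takes the approach the paper intends; in fact the paper states this proposition without proof, remarking only that it follows from the definitions (the reference to Definition \ref{loopext3space} there appears to be a typo for \ref{loopext3space2}), with the surrounding text already supplying the object identification $\coprod_{g\in G}{_1}\mathcal{L}_gX$ and the fact that each ${_1}\mathcal{L}_gX/\!\!/L^1_gG\rtimes\mathbb{T}$ is a full subgroupoid. Your proof correctly fills in the two remaining steps — discreteness of $G$ and connectedness of $\mathbb{R}$ force $\alpha$ to be a constant $h$, so a morphism relates only conjugate twisting elements and, when $\sigma=\sigma'=g$, lands in $C_G(g)=L^1_gG$; and conjugating by a chosen $h$ with $h^{-1}\sigma h=g$ gives essential surjectivity — which is exactly the implicit argument.
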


Next we show the physical meaning of  $L_{\sigma}^1 G$. Recall
that the gauge group of a principal bundle is defined to be the
group of its vertical automorphisms. The readers may refer
\cite{MV} for more details. For a $G-$bundle $P\longrightarrow
S^1$, let $L_P G$ denote its gauge group.

We have the well-known facts below. 
\begin{lemma}The principal $G-$bundles over $S^1$ are classified up to isomorphism by
homotopy classes $$[S^1, BG]\cong \pi_0G/\mbox{conj}.$$ Up to
isomorphism every principal $G-$bundle over $S^1$ is isomorphic to
one of the forms $P_{\sigma}\longrightarrow S^1$ with $\sigma\in
G$ and
$$P_{\sigma}:=\mathbb{R}\times G/ (s+1, g)\sim(s, \sigma g).$$ A complete collection of isomorphism classes is given by  a
choice of representatives for each conjugacy class of
$\pi_0G$.\label{ghostlem1}\end{lemma}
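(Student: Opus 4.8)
The plan is to derive all three assertions from the classical homotopy classification of principal bundles together with an explicit clutching description along the universal cover $p\colon\mathbb{R}\to S^1=\mathbb{R}/\mathbb{Z}$, $s\mapsto s\bmod\mathbb{Z}$. For the first assertion I would invoke the fact that over a paracompact base principal $G$-bundles are classified by free homotopy classes of maps into $BG$; as $BG$ is path-connected, $[S^1,BG]$ is the set of conjugacy classes of $\pi_1(BG)$, and the fibre sequence $G\to EG\to BG$ with $EG$ contractible gives $\pi_1(BG)\cong\pi_0G$. This already yields $[S^1,BG]\cong\pi_0G/\mathrm{conj}$, and it then remains to identify this set with the explicit family $\{P_\sigma\}$.

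Next I would check by hand that $P_\sigma=(\mathbb{R}\times G)/{\sim}$ really is a principal $G$-bundle over $S^1$: the right action $[s,g]\cdot h=[s,gh]$ is well defined because the relation $(s+1,g)\sim(s,\sigma g)$ is right $G$-equivariant, and local triviality holds because over an open arc of length $<1$ the quotient map restricts to a homeomorphism on the corresponding slab $I\times G$. I would then observe that $P_\sigma$ is the mapping torus of the right $G$-equivariant map $g\mapsto\sigma g$, so its classifying map $S^1\to BG$ represents the class of $\sigma$ under $\pi_1(BG)\cong\pi_0G$. For the converse --- that every $\pi\colon P\to S^1$ is isomorphic to some $P_\sigma$ --- I would pull back along $p$; since $\mathbb{R}$ is contractible $p^*P$ is trivial, so fix a trivialization $\mathbb{R}\times G\xrightarrow{\sim}p^*P$. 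Writing $P\cong p^*P/\mathbb{Z}$ with $\mathbb{Z}$ acting by deck transformations, the generating deck transformation is, in this trivialization, a $G$-bundle map of $\mathbb{R}\times G$ over $s\mapsto s+1$, hence of the form $(s,g)\mapsto(s+1,\tau(s)g)$ for a continuous $\tau\colon\mathbb{R}\to G$; a change of trivialization by a gauge transformation $\psi\colon\mathbb{R}\to G$ modifies $\tau$ to $s\mapsto\psi(s+1)^{-1}\tau(s)\psi(s)$. Prescribing $\psi$ freely on $[0,1]$ and extending by the resulting first-order recursion normalizes $\tau$ to a constant $\sigma$, which identifies $P$ with $P_\sigma$.

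Finally, rerunning the same computation shows that $P_\sigma\cong P_{\sigma'}$ as $G$-bundles if and only if there is a continuous $\psi\colon\mathbb{R}\to G$ with $\sigma\psi(s)=\psi(s+1)\sigma'$; restricting to $[0,1]$, such a $\psi$ exists precisely when there is a path in $G$ from $\psi(0)$ to $\sigma\psi(0)(\sigma')^{-1}$, i.e. precisely when $[\sigma]$ and $[\sigma']$ are conjugate in $\pi_0G$. Hence the isomorphism classes of principal $G$-bundles over $S^1$ biject with the conjugacy classes of $\pi_0G$; picking one $\sigma$ per class yields an irredundant complete list, and by the previous paragraph this is exactly the bijection coming from the homotopy classification. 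Everything here is classical, so the one point requiring attention is bookkeeping: keeping the left/right conventions consistent in the clutching formula and in the $\mathbb{Z}$-action on $P_\sigma$, and verifying that the normalizing recursion for $\psi$ is genuinely solvable --- which is where the freedom to prescribe $\psi$ on a fundamental domain is used. The passage from ``all of $G$'' to ``$\pi_0G$'' needs no separate input, since any two elements in one component are joined by a path.
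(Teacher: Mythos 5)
The paper itself offers no proof of this lemma: it is introduced with the phrase ``We have the well-known facts below'' and stated without argument, so there is no route in the paper to compare yours against. Your proof is correct and is the standard one. The classifying-space computation in the first paragraph is accurate, and the clutching argument in the second and third paragraphs is sound: pulling back along $p\colon\mathbb{R}\to S^1$, reading off the generating deck transformation as $(s,g)\mapsto(s+1,\tau(s)g)$, and normalizing $\tau$ by a gauge transformation $\psi$ via the recursion $\psi(s+1)=\tau(s)\psi(s)\sigma^{-1}$ does reduce any bundle to some $P_\sigma$. The one phrase worth tightening is ``prescribing $\psi$ freely on $[0,1]$'': what you actually do is prescribe $\psi$ on $[0,1]$, read off $\sigma:=\psi(1)^{-1}\tau(0)\psi(0)$ from the endpoint constraint, and then the recursion is compatible at every integer, so $\psi$ extends continuously and the new cocycle is identically $\sigma$. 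You flag this yourself, and the same care resolves the left/right convention in $\sigma\psi(s)=\psi(s+1)\sigma'$ versus $\phi(s)\sigma=\sigma'\phi(s+1)$ -- both reduce, via a path in $G$ on $[0,1]$, to the statement that $[\sigma]$ and $[\sigma']$ are conjugate in $\pi_0G$. Nothing is missing.
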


For the gauge group $L_{P_{\sigma}} G$ we have the conclusion
below.

\begin{proposition}For the bundle
$P_{\sigma}\longrightarrow S^1$, $L_{P_{\sigma}}G$ is isomorphic
to the twisted loop group $L^1_{\sigma}G$.
\label{ghostlem2}\end{proposition}

\begin{proof}


Each automorphism $f$ of the bundle $P_{\sigma}\longrightarrow
S^1$ has the form
\begin{equation}\begin{CD}P_{\sigma} @>{[s, g]\mapsto [s, \gamma_f(s)g]}>> P_{\sigma} \\ @VVV @VVV \\ S^1 @>{=}>> S^1
\end{CD}\end{equation} for some $\gamma_f: \mathbb{R}\longrightarrow
G$.  The morphism is well-defined if and only if
$\gamma_f(s+1)=\sigma^{-1}\gamma_f(s)\sigma$. So we get a
well-defined map $$F: L_{P_{\sigma}}G\longrightarrow
L^1_{\sigma}G\mbox{,    } f\mapsto\gamma_f.$$  It is a bijection.
Moreover, by the property of group action, $F$ sends the identity
map to the constant map $\mathbb{R}\longrightarrow G\mbox{,   }
s\mapsto e$, which is the trivial element in $L^1_{\sigma}G$, and
for two automorphisms $f_1$ and $f_2$ at the object, $F(f_1\circ
f_2)= \gamma_{f_1}\cdot \gamma_{f_2}$. So $L_{P_{\sigma}}G$ is
isomorphic to $ L^1_{\sigma}G$.

\end{proof}

\subsubsection{Ghost Loops} \label{ghost}

Let $G$ be a compact Lie group and $X$ a $G-$space. In this
section we introduce a subgroupoid $GhLoop(X/\!\!/G)$ of
$Loop^{ext}_1(X/\!\!/G)$, which can be computed locally.

\begin{definition}[Ghost Loops]The groupoid of ghost loops is defined to be the full subgroupoid
$GhLoop(X/\!\!/G)$ of $Loop^{ext}_1(X/\!\!/G)$ consisting of
objects $S^1\leftarrow
P\buildrel{\widetilde{\delta}}\over\rightarrow X$ such that
$\widetilde{\delta}(P)\subseteq X$ is contained in a single
$G-$orbit. \label{ghostloopdef} \end{definition}

For a given $\sigma\in G$, define the space
\begin{equation}
GhLoop_{\sigma}(X/\!\!/G):=\{\delta\in {_1}\mathcal{L}{_{\sigma}}
X |\delta(\mathbb{R})\subseteq G\delta(0) \}.
\end{equation}

We have a corollary of Proposition \ref{loop1equivske} below.
\begin{proposition}$GhLoop(X/\!\!/G)$ is equivalent to the
groupoid
$$\Lambda(X/\!\!/G):=\coprod_{[\sigma]}GhLoop_{\sigma}(X/\!\!/G)/\!\!/L^1_{\sigma}G\rtimes\mathbb{T}$$
where the coproduct goes over conjugacy classes in $\pi_0G$.
\label{skullofghost}
\end{proposition}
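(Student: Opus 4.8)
The plan is to build on Proposition \ref{loop1equivske}, which already identifies $\mathcal{L}(X/\!\!/G)=\coprod_{[\sigma]}{_1}\mathcal{L}_\sigma X/\!\!/L^1_\sigma G\rtimes\mathbb{T}$ as a skeleton of $Loop^{ext}_2(X/\!\!/G)$. Since by the Lemma preceding that proposition $Loop^{ext}_1(X/\!\!/G)$ is isomorphic to a full subgroupoid of $Loop^{ext}_2(X/\!\!/G)$, and $GhLoop(X/\!\!/G)$ is by Definition \ref{ghostloopdef} the full subgroupoid of $Loop^{ext}_1(X/\!\!/G)$ on those objects $S^1\leftarrow P\xrightarrow{\widetilde\delta}X$ with $\widetilde\delta(P)$ inside one $G$-orbit, the strategy is simply to restrict the skeletonization to this full subcategory. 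First I would observe that ``being a full subgroupoid whose inclusion is an equivalence'' is inherited by full subgroupoids: if $\mathcal{C}\hookrightarrow\mathcal{D}$ is an equivalence onto a full subgroupoid and $\mathcal{C}'\subseteq\mathcal{D}$ is full, then the full subgroupoid of $\mathcal{C}$ on the objects landing (up to isomorphism) in $\mathcal{C}'$ is equivalent to $\mathcal{C}'$.

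Concretely, I would trace the functor $F$ from the Lemma: it sends $S^1\leftarrow P\xrightarrow{f}X$ to $(\sigma,\gamma)$ with $\gamma(s)=f([s,e])$, and one checks that $f(P)$ lies in a single $G$-orbit if and only if $\gamma(\mathbb{R})\subseteq G\gamma(0)$, i.e. if and only if the corresponding point of ${_1}\mathcal{L}_\sigma X$ lies in $GhLoop_\sigma(X/\!\!/G)$. Hence, under the equivalence of Proposition \ref{loop1equivske}, the essential image of $GhLoop(X/\!\!/G)$ is exactly the full subgroupoid of $\mathcal{L}(X/\!\!/G)$ whose object set is $\coprod_{[\sigma]}GhLoop_\sigma(X/\!\!/G)$. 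Because $GhLoop_\sigma(X/\!\!/G)$ is an $L^1_\sigma G\rtimes\mathbb{T}$-invariant subspace of ${_1}\mathcal{L}_\sigma X$ (the condition $\delta(\mathbb{R})\subseteq G\delta(0)$ is preserved by the centralizer action and by rotation, as is immediate from formula (\ref{chongaction})), this full subgroupoid is precisely the translation groupoid $\coprod_{[\sigma]}GhLoop_\sigma(X/\!\!/G)/\!\!/L^1_\sigma G\rtimes\mathbb{T}=\Lambda(X/\!\!/G)$, and restriction of an equivalence to a full, saturated subcategory is again an equivalence.

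The two points that need genuine (if short) verification are: (a) the orbit condition on $P$ matches the orbit condition on $\gamma$ — here the key is that $P\to S^1$ is a principal $G$-bundle so $\widetilde\delta(P)=\widetilde\delta(G\text{-orbit of any local section})=G\cdot\gamma(s)$ for every $s$, and the compatibility $\gamma(s+1)=\gamma(s)\sigma$ then forces the whole image to be a single orbit $G\gamma(0)$; and (b) the invariance of $GhLoop_\sigma$ under $L^1_\sigma G\rtimes\mathbb{T}$, which follows because elements of $L^1_\sigma G=C_G(\sigma)$ act pointwise by $G$ and rotations only reparametrize $s$. Neither of these is hard, so I expect the main (minor) obstacle to be purely bookkeeping: making sure that ``conjugacy classes in $\pi_0 G$'' is the correct indexing set for the coproduct and that the skeleton chosen in Proposition \ref{loop1equivske} restricts cleanly, i.e. that no component $GhLoop_\sigma$ is empty in a way that would change the indexing. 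Once those are in place, the statement follows formally, so the proof is essentially ``apply Proposition \ref{loop1equivske}, restrict to the full subgroupoid cut out by the ghost-loop condition, and check the condition transports correctly across $F$.''
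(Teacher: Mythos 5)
The paper gives no written proof for this proposition; it is presented as an immediate corollary of Proposition \ref{loop1equivske}, so your fleshing-out is essentially what the author expects the reader to supply, and the verification points you identify (transport of the orbit condition across $F$, and $L^1_\sigma G\rtimes\mathbb{T}$-invariance of $GhLoop_\sigma$) are the right ones.

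There is one step you gloss over, and it is exactly the point where the hypothesis is used. Your opening ``observation'' is stated too strongly: if $\iota\colon\mathcal{C}\hookrightarrow\mathcal{D}$ is merely fully faithful (and the lemma preceding Proposition \ref{loop1equivske} explicitly says $F$ is fully faithful but \emph{not} essentially surjective), then for full $\mathcal{C}'\subseteq\mathcal{D}$ the preimage full subgroupoid of $\mathcal{C}$ is equivalent to $\mathcal{C}'\cap\operatorname{ess.im}(\iota)$, not necessarily to $\mathcal{C}'$. Concretely, your part (a) checks that $\widetilde\delta(P)$ lying in a single orbit is equivalent to $\gamma(\mathbb{R})\subseteq G\gamma(0)$ \emph{for objects in the image of} $F$; it does not check that every $\delta\in GhLoop_\sigma(X/\!\!/G)$ on the $Loop^{ext}_2$ side actually lifts, up to isomorphism, to a bibundle in $Loop^{ext}_1(X/\!\!/G)$. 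Without that, $F$ restricted to ghost loops might fail to hit all of $\Lambda(X/\!\!/G)$. The fix is short and is where the ghost-loop condition earns its keep: since $\delta$ takes values in a single orbit, it is gauge-equivalent to a constant loop (for $G$ finite this is exactly Example \ref{finiteghost}, where $GhLoop_\sigma(X/\!\!/G)\cong X^\sigma$; for compact Lie $G$ lift $\delta$ through $G\to G\delta(0)$ over the contractible $\mathbb{R}$ and gauge by the inverse of the lift), and constant loops visibly come from the bibundle $P_\sigma$ with the obvious $G$-equivariant $f$. State and check that surjectivity, and the proof is complete; part (b) is fine as written.
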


\begin{example}
If $G$ is a finite group, it has the discrete topology. In this
case, $LG$ consists of constant loops and, thus,  is isomorphic to
$ G$. The space of objects of $GhLoop(X/\!\!/G)$ can be identified
with $X$. For $\sigma\in G$ and any integer $k$, $L^k_{\sigma} G$
can be identified with $C_G(\sigma)$;
$L^k_{\sigma}G\rtimes\mathbb{T}\cong C_G(\sigma)\times
\mathbb{R}/\langle (\sigma, -k)\rangle$; and
$GhLoop_{\sigma}(X/\!\!/G)$ can be identified with $X^{\sigma}$.

\label{finiteghost}
\end{example}

Unlike true loops, ghost loops have the property that they can be
computed locally, as shown in  the lemma below. The proof is left
to the readers.

\begin{proposition}If $X=U\cup V$ where $U$ and $V$ are $G-$invariant open subsets,
then $GhLoop(X/\!\!/G)$ is isomorphic to the fibred product of
groupoids $$GhLoop(U/\!\!/G)\cup_{GhLoop((U\cap
V)/\!\!/G)} GhLoop(V/\!\!/G).$$ \label{ghostmv} 
\end{proposition}
Thus, the ghost loop construction satisfies Mayer-Vietoris
property. Moreover, it has the change-of-group property.

\begin{proposition}Let $H$ be a closed subgroup of $G$. It acts on the
space of left cosets $G/H$ by left multiplication. Let $\mbox{pt}$
denote the single point space with the trivial $H-$action. Then we
have the equivalence  of topological groupoids between
$Loop^{ext}_1((G/H)/\!\!/G)$ and $Loop^{ext}_1(\mbox{pt}/\!\!/H)$.
Especially, there is an equivalence between the groupoids
$GhLoop((G/H)/\!\!/G)$ and $GhLoop(\mbox{pt}/\!\!/ H)$.
\label{globalghost}\end{proposition}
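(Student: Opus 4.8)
The plan is to produce an explicit equivalence of topological groupoids and then identify its restriction on ghost loops. The key observation is that a bibundle from $S^1/\!\!/\ast$ to $(G/H)/\!\!/G$ amounts to a principal $G$-bundle $P \to S^1$ together with a $G$-equivariant map $f\colon P \to G/H$, and giving such an $f$ is the same as giving a reduction of the structure group of $P$ to $H$, i.e. a principal $H$-bundle $Q \to S^1$ with $P \cong Q \times_H G$. Concretely, set $Q := f^{-1}(eH)$, which is an $H$-invariant submanifold of $P$ on which $H$ acts freely; the composite $Q \hookrightarrow P \to S^1$ is a principal $H$-bundle, and the inclusion $Q \hookrightarrow P$ together with the unique map $Q \to \mathrm{pt}$ exhibits the image under the functor we want. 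In the other direction, from a principal $H$-bundle $Q \to S^1$ (an object of $Loop_1(\mathrm{pt}/\!\!/H)$, equivalently a bibundle $S^1/\!\!/\ast \to \mathrm{pt}/\!\!/H$) we form $P := Q \times_H G \to S^1$ with the evident $G$-action and the projection $f\colon Q\times_H G \to G/H$, $[q,g]\mapsto g^{-1}H$ — wait, more carefully $[q,g] \mapsto gH$ adjusted so that it is $G$-equivariant; this is an object of $Loop_1((G/H)/\!\!/G)$.

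First I would define these two assignments on objects as above and check they are mutually inverse up to canonical isomorphism: $f^{-1}(eH) \times_H G \cong P$ canonically by $[q,g] \mapsto g^{-1}\cdot q$ (using that $f$ is surjective onto $G/H$ since $G$ acts transitively, and that $H$ acts freely on each fibre of $f$), and conversely $(Q\times_H G) \supseteq \{[q,e]\} \cong Q$. Next I would define the functors on morphisms. A morphism in $Loop_1^{ext}((G/H)/\!\!/G)$ is a pair $(t,\alpha)$ with $t$ a rotation of $S^1$ and $\alpha\colon P \to P'$ a $G$-bundle map over $t$ commuting with the maps to $G/H$; since $\alpha$ is $G$-equivariant and $f' \circ \alpha = f$, it restricts to a map $f^{-1}(eH) \to f'^{-1}(eH)$, which is an $H$-bundle map over $t$, hence a morphism in $Loop_1^{ext}(\mathrm{pt}/\!\!/H)$. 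In the reverse direction an $H$-bundle map $\beta\colon Q \to Q'$ over $t$ induces $\beta \times_H G\colon Q\times_H G \to Q'\times_H G$ over $t$, compatible with the projections to $G/H$. One then checks functoriality (composition and identities are preserved — routine) and that the two functors are mutually inverse up to natural isomorphism built from the canonical object isomorphisms above, giving the claimed equivalence $Loop_1^{ext}((G/H)/\!\!/G) \simeq Loop_1^{ext}(\mathrm{pt}/\!\!/H)$.

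For the statement about ghost loops I would simply check that the equivalence carries $GhLoop((G/H)/\!\!/G)$ onto $GhLoop(\mathrm{pt}/\!\!/H)$, which is automatic: the latter is the \emph{full} subgroupoid on objects whose map to $G/H$, respectively $\mathrm{pt}$, has image in a single orbit. Since the target $\mathrm{pt}/\!\!/H$ has a single object, \emph{every} object of $Loop_1^{ext}(\mathrm{pt}/\!\!/H)$ is a ghost loop, so $GhLoop(\mathrm{pt}/\!\!/H) = Loop_1^{ext}(\mathrm{pt}/\!\!/H)$; and an object $S^1 \leftarrow P \xrightarrow{f} G/H$ with $f(P)$ in a single $G$-orbit must have $f$ surjective (the only $G$-invariant nonempty subset of the transitive $G$-set $G/H$ contained in one orbit is all of $G/H$), which is exactly the condition already used, so restricting the equivalence to full subgroupoids on ghost-loop objects gives the equivalence $GhLoop((G/H)/\!\!/G) \simeq GhLoop(\mathrm{pt}/\!\!/H)$.

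The main obstacle I expect is bookkeeping rather than conceptual: getting the $G$-equivariance conventions consistent so that $f\colon Q\times_H G \to G/H$ and the inverse reduction $Q = f^{-1}(eH)$ really are inverse, and verifying smoothness — that $f^{-1}(eH)$ is a smooth submanifold and the bundle structures are smooth (this uses that $G/H$ is a homogeneous space and $H$ acts freely and properly, so $f$ is a submersion onto each orbit). Tracking the rotation parameter $t$ through the associated-bundle construction, and confirming that bibundle isomorphisms correspond exactly on both sides (using the left-principality axiom of Definition \ref{bibundle} to identify morphisms of bibundles with equivariant bundle isomorphisms), is the other place where care is needed; both are standard for associated-bundle/reduction-of-structure-group arguments once set up correctly.
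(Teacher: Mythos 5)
Your proposal is essentially the same construction the paper uses: the paper's functor $F$ takes $Q$ to be the pullback of $P \to G/H$ along $\{eH\}\hookrightarrow G/H$, which is your $f^{-1}(eH)$, and the paper's $F'$ is the associated-bundle construction $Q\mapsto G\times_H Q\to G/H$, which is your $Q\times_H G$. You spell out more carefully than the paper does why the two functors are mutually inverse up to canonical isomorphism (the paper simply asserts $F\circ F'=\mathrm{Id}$ and $F'\circ F=\mathrm{Id}$), and your observation that every object of $Loop^{ext}_1(\mathrm{pt}/\!\!/H)$ is automatically a ghost loop, while ghost objects on the $(G/H)/\!\!/G$ side are exactly those with $f$ surjective, is a clean justification of the final sentence that the paper leaves to the reader.
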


\begin{proof}

First we define a functor $F:
Loop^{ext}_1((G/H)/\!\!/G)\longrightarrow
Loop^{ext}_1(\mbox{pt}/\!\!/ H)$ sending an object $S^1\leftarrow
P\buildrel{\widetilde{\delta}}\over\rightarrow G/H$  to
$S^1\leftarrow Q\rightarrow \{eH\}=\mbox{pt}$ where
$Q\longrightarrow eH$ is the constant map, and $Q\longrightarrow
S^1$ is the pull back bundle $$\xymatrix{Q \ar[r] \ar[d] &\{eH\}
\ar@{^{(}->}[d]
\\ P\ar[r] &G/H.}$$

It sends a morphism $$\xymatrix{P'\ar[r] \ar[d]&P \ar[r] \ar[d]
&G/H\\ S^1\ar[r] &S^1&}$$ to the morphism
$$\xymatrix{Q' \ar[r] \ar[d] &Q\ar[r] \ar[d] &\{eH\} \ar[d] \\ P'\ar[r]\ar[d] &P\ar[r]\ar[d] &G/H\\
S^1 \ar[r] &S^1 & } $$ where all the squares are pull-back.

In addition, we can define a functor $F': Loop^{ext}_1
(\mbox{pt}/\!\!/ H) \longrightarrow Loop^{ext}_1((G/H)/\!\!/G)$
sending an object $S^1\leftarrow
 Q\rightarrow \mbox{pt}$ to $S^1\leftarrow G\times_HQ\rightarrow
 G\times_H\mbox{pt}=G/H$ and sending a morphism $$\xymatrix{Q'\ar[r] \ar[d] & Q \ar[d] \\ S^1\ar[r]
 &S^1}$$ to $$\xymatrix{G\times_HQ'\ar[r] \ar[d] & G\times_HQ \ar[d] \ar[r] &G\times_H\mbox{pt}=G/H\\ S^1\ar[r]
 &S^1 &}$$

$F\circ F'$ and $F'\circ F$ are both identity maps. So the
topological groupoids $Loop^{ext}_1((G/H)/\!\!/G)$ and
$Loop^{ext}_1(\mbox{pt}/\!\!/ H)$ are equivalent.

\bigskip

We can prove the equivalence between $GhLoop((G/H)/\!\!/G)$ and
$GhLoop(\mbox{pt}/\!\!/ H)$ in the same way.
\end{proof}


\begin{remark}In general, if $H^*$ is an equivariant cohomology theory,
Proposition \ref{globalghost} implies the functor $$X/\!\!/G
\mapsto H^*(GhLoop(X/\!\!/G))$$ gives a new equivariant cohomology
theory. When $H^*$ has the change of group isomorphism, so does
$H^*(GhLoop(-))$.
\end{remark}

\section{Quasi-elliptic cohomology $QEll^*_G$}\label{conqec}

Unless otherwise indicated, we assume $G$ is a finite group and
$X$ is a $G-$space in the rest part of the paper. The main
references for Section \ref{conqec} are Rezk's unpublished work \cite{Rez11} and the author's PhD thesis \cite{Huanthesis}. The construction of the theory $QEll^*_G$
for any compact Lie group $G$ will be shown in the paper \cite{Huancoming}. In Section
\ref{orbqec} we define $QEll^*_G$ and prove some of its main
properties.
Before that we discuss in Section \ref{lambdarepresentationlemma}
the complex representation ring of
\begin{equation}\Lambda_G(g):=L^1_{g}G\rtimes\mathbb{T}\cong C_G(g)\times
\mathbb{R}/\langle (g, -1)\rangle,\label{lambdadef}\end{equation}
which is a factor of $QEll^*_G(\mbox{pt})$. We assume familiarity
with \cite{SegalequiK} and \cite{KC}.

\subsection{Preliminary: representation ring of
$\Lambda_G(g)$}\label{lambdarepresentationlemma}
 Let $q: \mathbb{T}\longrightarrow U(1)$
be the isomorphism $t\mapsto e^{2\pi it}$. The complex
representation ring $R\mathbb{T}$ is $\mathbb{Z}[q^{\pm}]$.

We have an exact sequence
$$1\longrightarrow C_G(g)\longrightarrow
\Lambda_G(g)\buildrel{\pi}\over\longrightarrow\mathbb{T}\longrightarrow
0$$ where the first map is $g\mapsto [g, 0]$ and the second map is
\begin{equation}\pi([g, t])= e^{2\pi it}.\label{pizq}\end{equation}
The map $\pi^*: R\mathbb{T}\longrightarrow R\Lambda_G(g)$ equips
the representation ring $R\Lambda_G(g)$ the structure as an $R\mathbb{T}-$module.

There is a relation between the complex representation ring of
$C_G(g)$ and that of $\Lambda_G(g)$, which is shown as Lemma 1.2
in \cite{Rez11} and Lemma 2.4.1 in \cite{Huanthesis}.

\begin{lemma}

The $R\mathbb{T}-$module $R\Lambda_G(g)$ with the action defined by $\pi^*: R\mathbb{T}\longrightarrow R\Lambda_G(g)$ is a free module.

In particular, there is an $R\mathbb{T}-$basis of $R\Lambda_G(g)$
given by irreducible representations $\{V_{\lambda}\}$, such that
restriction $V_{\lambda}\mapsto V_{\lambda}|_{C_G(g)}$ to $C_G(g)$
defines a bijection between $\{V_{\lambda}\}$ and the set
$\{\lambda\}$ of irreducible representations of
$C_G(g)$.\label{cl}
\end{lemma}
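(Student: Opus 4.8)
The plan is to exploit the short exact sequence $1 \to C_G(g) \to \Lambda_G(g) \xrightarrow{\pi} \mathbb{T} \to 0$ together with the fact that $C_G(g)$ is finite, so that $\Lambda_G(g)$ is an extension of the compact connected abelian group $\mathbb{T}$ by a finite group; in particular $\Lambda_G(g)$ has identity component a torus $\mathbb{T}_0$ mapping isomorphically to $\mathbb{T}$ under $\pi$ (since $C_G(g)$ is discrete), and $\pi_0\Lambda_G(g) \cong C_G(g)$. First I would make this concrete using the presentation $\Lambda_G(g) = C_G(g)\times\mathbb{R}/\langle(g,-1)\rangle$ from (\ref{lambdadef}): every element is $[h,t]$ with $h\in C_G(g)$, $t\in\mathbb{R}$, and $[h,t]=[h',t']$ iff $t-t'\in\mathbb{Z}$ and $h'=hg^{t'-t}$. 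Fix $l$ the order of $g$; then $\mathbb{Z}/l \hookrightarrow \Lambda_G(g)$ via $1\mapsto [g,-1]=[e,0]\cdot(\text{stuff})$... more usefully, the closed subgroup generated by $[e,1/l]$ — wait, one must track that $[g,-1]$ is the identity, so $[e,l]$ need not be. Let me instead use: the element $c=[e,1]$ satisfies $c=[g^{-1}\cdot g, 1]=[g^{-1},0]\cdot$... I would pin down that $\Lambda_G(g)$ is abelian when restricted to the subgroup generated by $C_G(g)$ and $\mathbb{T}_0$ only up to the action of $g$, which is trivial since $g$ is central in $C_G(g)$; hence $\Lambda_G(g)$ is itself abelian? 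No — $C_G(g)$ need not be abelian. So the correct structural statement is just: $\Lambda_G(g)$ has maximal torus $\mathbb{T}_0\cong\mathbb{T}$ (rank one), normal, with $\Lambda_G(g)/\mathbb{T}_0\cong C_G(g)$.

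The key step is then to analyze irreducible representations of $\Lambda_G(g)$ via Clifford theory / the Mackey machine for the extension $\mathbb{T}_0 \lhd \Lambda_G(g)$, or more directly as follows. Since $\mathbb{T}_0$ is central? It is central iff $g$ acts trivially, i.e. iff $[g^s, -]$ commutes with everything, which holds because $g\in Z(C_G(g))$; so yes, $\mathbb{T}_0$ is a central circle in $\Lambda_G(g)$. Therefore every irreducible $V$ of $\Lambda_G(g)$ restricts on $\mathbb{T}_0$ to a character, i.e. a power of $q$; write $V|_{\mathbb{T}_0}=q^{n}$. The representations with $n=0$ are exactly the irreducibles pulled back along a quotient $\Lambda_G(g)\to\Lambda_G(g)/\mathbb{T}_0\cong C_G(g)$, but that quotient is not quite $C_G(g)$-linear in the naive way — I would instead produce, for each irreducible $\lambda$ of $C_G(g)$, a canonical irreducible $V_\lambda$ of $\Lambda_G(g)$ by choosing the character of $\mathbb{T}_0$ that is "compatible" with the central character of $g$ inside $\lambda$. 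Concretely: $g$ acts on $\lambda$ by a scalar $\zeta$, an $l$-th root of unity, say $\zeta=e^{2\pi i j/l}$; then set $V_\lambda$ to be the representation of $\Lambda_G(g)$ on the same vector space with $[h,t]\mapsto e^{2\pi i jt/l}\cdot\lambda(h)$ — this is well-defined on $C_G(g)\times\mathbb{R}/\langle(g,-1)\rangle$ precisely because the relation $(g,-1)$ forces $e^{-2\pi i j/l}\lambda(g)=\mathrm{id}$, which holds by choice of $j$. One checks $V_\lambda$ is irreducible (its restriction to $C_G(g)$ is $\lambda$, already irreducible), and $V_\lambda|_{C_G(g)}=\lambda$, giving the claimed bijection.

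Finally, to see $R\Lambda_G(g)$ is free over $R\mathbb{T}=\mathbb{Z}[q^{\pm}]$ with basis $\{V_\lambda\}$: every irreducible $W$ of $\Lambda_G(g)$ has $W|_{\mathbb{T}_0}=q^m$ for some $m\in\mathbb{Z}$, and then $W\otimes q^{-m}$ has trivial $\mathbb{T}_0$-character on... no, we need $W\otimes q^{k}$ to match the central-character constraint of its restriction to $C_G(g)$. The right statement: $W|_{C_G(g)}$ is irreducible (again because $\mathbb{T}_0$ is central and connected, so acts by a scalar on the irreducible $W$, hence $W$ stays irreducible on any subgroup surjecting onto $\pi_0$), equal to some $\lambda$, and then $W\cong V_\lambda\otimes q^{r}$ for the unique $r$ making the $\mathbb{T}_0$-characters agree. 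Thus the irreducibles of $\Lambda_G(g)$ are exactly $\{q^r\cdot V_\lambda : r\in\mathbb{Z},\ \lambda\in\mathrm{Irr}\,C_G(g)\}$ with no coincidences, which is precisely the assertion that $\{V_\lambda\}$ is a $\mathbb{Z}[q^{\pm}]$-basis of $R\Lambda_G(g)$. The main obstacle — and the point needing genuine care rather than invocation — is the well-definedness and irreducibility bookkeeping: that $\mathbb{T}_0$ really is central, that the scalar by which $g$ acts on $\lambda$ matches up with an honest character of $\mathbb{T}_0/(\text{image of }\langle(g,-1)\rangle)$, and that restriction to $C_G(g)$ preserves irreducibility; once these are nailed down the freeness is a clean counting argument. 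I would also just cite Lemma 1.2 of \cite{Rez11} for the statement and present the above as the argument.
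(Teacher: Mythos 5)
Your argument is correct in substance and takes a genuinely different route from the paper's. The paper lifts to the product $C_G(g)\times\mathbb{R}$ covering $\Lambda_G(g)$, invokes the classification of irreducible representations of a product of compact Lie groups as external tensor products, and then imposes the condition that such a tensor product factor through the quotient by $\langle(g,-1)\rangle$. You instead work inside $\Lambda_G(g)$ itself: you locate the central circle $\mathbb{T}_0$ (the identity component), apply Schur's lemma to see that $\mathbb{T}_0$ acts by a scalar character on each irreducible, conclude that restriction to $C_G(g)$ is already irreducible because $\Lambda_G(g)=C_G(g)\cdot\mathbb{T}_0$ with $\mathbb{T}_0$ contributing only scalars, and then organize the count by twisting with powers of $q$. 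Both are valid; your version makes the $\mathbb{Z}[q^{\pm}]$-freeness structurally transparent as a torus-twist orbit count and requires no classification theorem as input.

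Two structural statements in your setup are nevertheless wrong, and although your actual argument never uses them, they should be fixed. The identity component $\mathbb{T}_0=\{[e,t]\}\subset\Lambda_G(g)$ is isomorphic to $\mathbb{R}/l\mathbb{Z}$ with $l=|g|$, and $\pi\colon\mathbb{T}_0\to\mathbb{T}$ is a degree-$l$ covering, not an isomorphism: its kernel is $\mathbb{T}_0\cap C_G(g)=\langle g\rangle$, nontrivial unless $g=e$ (discreteness of $C_G(g)$ gives you a covering map, not injectivity). Correspondingly $\Lambda_G(g)/\mathbb{T}_0\cong C_G(g)/\langle g\rangle$, not $C_G(g)$. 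What saves you is that the construction of $V_\lambda$ only uses the relation $(g,-1)\sim(e,0)$, and your final bookkeeping matches characters of $\mathbb{T}_0$ itself (a power $q_0^m$ of the generator $q_0$ of $\widehat{\mathbb{T}_0}$) rather than literal powers of $q$. To make it airtight you should record that $q|_{\mathbb{T}_0}=q_0^{\,l}$, that the identity $[g,0]=[e,1]$ in $\Lambda_G(g)$ forces the congruence $m\equiv j\pmod{l}$ where $\lambda(g)=e^{2\pi i j/l}\cdot\mathrm{id}$, and hence that $r=(m-j)/l$ is an integer giving the unique twist with $W\cong V_\lambda\otimes q^{r}$.
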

\begin{proof}

Let $l$ be the order of  $g$. Note that $\Lambda_G(g)$ is
isomorphic to
$$C_G(g)\times\mathbb{R}/l\mathbb{Z}/\langle(g, -1)\rangle.$$ Thus, it is the quotient of the product of two compact Lie groups.

Let $\lambda: C_G(g)\longrightarrow GL(n, \mathbb{C})$ be an
$n-$dimensional $C_G(g)-$representation with representation space
$V$ and $\eta: \mathbb{R}\longrightarrow GL(n, \mathbb{C})$ be a
representation of $\mathbb{R}$ such that $\lambda(g)$ acts on $V$
via scalar multiplication by $\eta(1)$. Define
 a $n-$dimensional
$\Lambda_G(g)-$representation $\lambda\odot_{\mathbb{C}} \eta$
with representation space $V$ by
\begin{equation}\lambda\odot_{\mathbb{C}} \eta([h, t]):
=\lambda(h)\eta(t).\end{equation}

Any irreducible $n-$dimensional representation of the quotient
group $\Lambda_G(g)=C_G(g)\times\mathbb{R}/\langle(g, -1)\rangle$
is an irreducible $n-$dimensional representation of the product
$C_G(g)\times\mathbb{R}$. And any finite dimensional irreducible
complex representation of the product of two compact Lie groups is
the tensor product of an irreducible representation of each
factor. So any irreducible representation of the quotient group
$\Lambda_G(g)$ is the tensor product of an irreducible
representation $\lambda$ of $C_G(g)$ with representation space $V$
and an irreducible representation $\eta$ of $\mathbb{R}$. Any
irreducible complex representation $\eta$ of $\mathbb{R}$ is one
dimensional. So the representation space of
$\lambda\odot_{\mathbb{C}} \eta$ is still $V$. $\eta(1)^l=I$. We
need $\eta(1)=\lambda (g)$. So $\eta(1)=e^{\frac{2\pi ik}{l}}$ for
some $k\in \mathbb{Z}$. So
$$\eta(t)= e^{\frac{2\pi i(k+lm)t}{l}}.$$ Any $m\in\mathbb{Z}$
gives a choice of  $\eta$ in this case. And $\eta$ is a
representation of $\mathbb{R}/l\mathbb{Z}\cong \mathbb{T}$.

Therefore, we have a bijective correspondence between

(1) isomorphism classes of irreducible
$\Lambda_G(g)-$representation $\rho$, and

(2) isomorphism classes of pairs $(\lambda, \eta)$ where $\lambda$
is an irreducible $C_G(g)-$representation and
$\eta:\mathbb{R}\longrightarrow \mathbb{C}^*$ is a character such
that $\lambda(g)=\eta(1)I$. $\lambda=\rho|_{C_G(g)}$.

Then as a corollary, the $R\mathbb{T}-$module $R\Lambda_G(g)$ with the $R\mathbb{T}-$action defined by $\pi^*: R\mathbb{T}\longrightarrow
R\Lambda_G(g)$

$\pi^*: R\mathbb{T}\longrightarrow
R\Lambda_G(g)$ exhibits $R\Lambda_G(g)$ as a free
$R\mathbb{T}-$module.
\end{proof}

\begin{remark} We can make a canonical choice of $\mathbb{Z}[q^{\pm}]$-basis
for $R\Lambda_{G}(g)$. For each irreducible $G$-representation
$\rho: G\longrightarrow Aut(G)$, write $\rho(\sigma)=e^{2\pi
ic}id$ for $c\in[0,1)$, and set $\chi_{\rho}(t)=e^{2\pi ict}$.
Then the pair $(\rho, \chi_{\rho})$ corresponds  to a unique
irreducible $\Lambda_{G}(g)$-representation
\begin{equation}\rho\odot_{\mathbb{C}} \chi_{\rho}([h, t]):
=\rho(h)\chi_{\rho}(t).\label{lambdaeq}\end{equation}
\label{lambdabasis}\end{remark}

\begin{example}[$G=\mathbb{Z}/N\mathbb{Z}$]
Let $G=\mathbb{Z}/N\mathbb{Z}$ for $N\geq 1$, and let $\sigma\in
G$. Given an integer $k\in\mathbb{Z}$ which projects to
$\sigma\in\mathbb{Z}/N\mathbb{Z}$, let $x_k$ denote the
representation of $\Lambda_G(\sigma)$ defined by
\begin{equation}\begin{CD}\Lambda_{G}(\sigma)=(\mathbb{Z}\times\mathbb{R})/(\mathbb{Z}(N,0)+\mathbb{Z}(k,1))
@>{[a,t]\mapsto[(kt-a)/N]}>> \mathbb{R}/\mathbb{Z}=\mathbb{T}
@>{q}>> U(1).\end{CD}\label{xk}\end{equation} $R\Lambda_G(\sigma)$
is isomorphic to the ring $\mathbb{Z}[q^{\pm}, x_k]/(x^N_k-q^k)$.
\label{ppex}
\end{example}

\begin{example}[$G=\Sigma_3$]\label{replambdasymm3}
$G=\Sigma_3$ has three conjugacy classes represented by $1$,
$(12)$, $(123)$ respectively.
\bigskip

$\Lambda_{\Sigma_3}(1)=\Sigma_3\times\mathbb{T}$, thus,
$R\Lambda_{\Sigma_3}(1)=R\Sigma_3\otimes R\mathbb{T}=\mathbb{Z}[X,
Y]/(XY-Y, X^2-1, Y^2-X-Y-1)\otimes\mathbb{Z}[q^{\pm}]$ where $X$
is the sign representation on $\Sigma_3$ and $Y$ is the standard
representation.
\bigskip

$C_{\Sigma_3}((12))=\langle(12)\rangle=\Sigma_2,$ thus,
$\Lambda_{\Sigma_3}((12))\cong\Lambda_{\Sigma_2}((12)).$ So we
have $$R\Lambda_{\Sigma_3}((12))\cong
R\Lambda_{\Sigma_2}((12))=\mathbb{Z}[q^{\pm},
x_1]/(x_1^2-q)\cong\mathbb{Z}[q^{\pm\frac{1}{2}}].$$

$C_{\Sigma_3}(123)=\langle(123)\rangle=\mathbb{Z}/3\mathbb{Z},$
thus,
$\Lambda_{\Sigma_3}((123))\cong\Lambda_{\mathbb{Z}/3\mathbb{Z}}(1).$
So we have $$R\Lambda_{\Sigma_3}((123))\cong\mathbb{Z}[q^{\pm},
x_1]/(x_1^3-q)\cong\mathbb{Z}[q^{\pm\frac{1}{3}}].$$\end{example}

\bigskip

Moreover, we have the conclusion below about the relation between
the induced representations
$Ind|^{\Lambda_G(\sigma)}_{\Lambda_H(\sigma)}(-)$ and
$Ind|^{C_G(\sigma)}_{C_H(\sigma)}(-).$
\begin{lemma}
Let $H$ be a subgroup of $G$ and $\sigma$ an element of $H$. Let
$m$ denote $[C_G(\sigma):C_H(\sigma)]$. Let $V$ denote a
$\Lambda_H(\sigma)-$representation $\lambda\odot_{\mathbb{C}}\chi$
with $\lambda$ a $C_H(\sigma)-$representation, $\chi$ a
$\mathbb{R}-$representation and $\odot_{\mathbb{C}}$ defined in
(\ref{lambdaeq}).

(i)
\begin{equation} res^{\Lambda_G(\sigma)}_{\Lambda_H(\sigma)}(\lambda\odot_{\mathbb{C}}\eta)=(res^{C_G(\sigma)}_{C_H(\sigma)}\lambda)\odot_{\mathbb{C}}\eta.\end{equation}

(ii) The induced representation
$$Ind^{\Lambda_G(\sigma)}_{\Lambda_H(\sigma)}
(\lambda\odot_{\mathbb{C}}\chi)$$ is isomorphic to the
$\Lambda_G(\sigma)-$representation
$$(Ind^{C_G(\sigma)}_{C_H(\sigma)}\lambda)\odot_{\mathbb{C}}\chi.$$
Their underlying vector spaces are both $V^{\oplus m}$.

Thus, the computation of both
$Ind^{\Lambda_G(\sigma)}_{\Lambda_H(\sigma)}
(\lambda\odot_{\mathbb{C}}\chi)$ and
$res^{\Lambda_G(\sigma)}_{\Lambda_H(\sigma)}(\lambda\odot_{\mathbb{C}}\eta)$
can be reduced to the computation of representations of finite
groups. \label{induequ}
\end{lemma}

The proof is straightforward and left to the readers.

\bigskip

Let $k$ be any integer. Next we describe the relation between
\begin{equation}\Lambda^k_G(g):=L^k_{g}G\rtimes\mathbb{T}\cong C_G(g)\times
\mathbb{R}/\langle (g, -k)\rangle \label{lambdakdef}\end{equation}
and $\Lambda_G(g)$, which gives the relation between their
representation rings.

There is an exact sequence
$$\begin{CD}1 @>>> C_G(g) @>{g\mapsto [g, 0]}>> \Lambda^k_G(g) @>{\pi_k}>>\mathbb{R}/k\mathbb{Z} @>>> 0\end{CD}$$
where the second map $\pi_k: \Lambda^k_G(g)\longrightarrow
\mathbb{R}/k\mathbb{Z}$ is $\pi_k([g, t])= e^{2\pi i t}$.

Let $q^{\frac{1}{k}}: \mathbb{R}/k\mathbb{Z}\longrightarrow U(1)$
denote the composition
$$\begin{CD}\mathbb{R}/k\mathbb{Z} @>{t\mapsto \frac{t}{k}}>> \mathbb{R}/\mathbb{Z} @>{q}>> U(1).\end{CD}$$
The representation ring $R(\mathbb{R}/k\mathbb{Z})$ is
$\mathbb{Z}[q^{\pm\frac{1}{k}}]$.

Analogous to Lemma \ref{cl}, we have the conclusion about
$R\Lambda^k_G(g)$ below.
\begin{lemma}\label{lambdakrepresentation}
The map $\pi^*_k: R(\mathbb{R}/k\mathbb{Z})\longrightarrow
R\Lambda^k_G(g)$ exhibits it as a free
$\mathbb{Z}[q^{\pm\frac{1}{k}}]-$module. There is a
$\mathbb{Z}[q^{\pm\frac{1}{k}}]-$basis of $R\Lambda^k_G(g)$ given
by irreducible representations $\{\rho_k\}$ such that the
restrictions $\rho_k|_{C_G(g)}$ of them to $C_G(g)$ are precisely
the $\mathbb{Z}$-basis of $RC_G(g)$ given by irreducible
representations.

In other words, any irreducible $\Lambda^k_G(g)-$representation
has the form $\rho\odot_{\mathbb{C}} \chi$ where $\rho$ is an
irreducible representation of $C_G(g)$,
$\chi:\mathbb{R}/k\mathbb{Z}\longrightarrow GL(n, \mathbb{C})$
such that $\chi(k)=\rho(g)$, and
\begin{equation}\rho\odot_{\mathbb{C}} \chi([h, t]):
=\rho(h)\chi(t)\mbox{,     for any   }[h, t]\in
\Lambda^k_G(g).\end{equation}
\end{lemma}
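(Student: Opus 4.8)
The plan is to adapt the proof of Lemma \ref{cl} essentially verbatim, using the isomorphism $\Lambda^k_G(g)\cong C_G(g)\times\mathbb{R}/\langle(g,-k)\rangle$ recorded in (\ref{lambdakdef}), which realizes $\Lambda^k_G(g)$ as a quotient of the product of the two compact Lie groups $C_G(g)$ and $\mathbb{R}/kl\mathbb{Z}$ (with $l$ the order of $g$). First I would recall that an irreducible complex representation of a quotient group $(A\times B)/N$ is the same thing as an irreducible representation of $A\times B$ on which $N$ acts trivially, and that every finite-dimensional irreducible complex representation of a product of compact Lie groups is an external tensor product $\lambda\boxtimes\eta$ of irreducibles of each factor. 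Applying this to $C_G(g)\times\mathbb{R}/kl\mathbb{Z}$, and noting that every irreducible representation $\eta$ of the compact abelian group $\mathbb{R}/kl\mathbb{Z}$ is one-dimensional, I get that an irreducible representation of $\Lambda^k_G(g)$ is given by a pair $(\rho,\chi)$ with $\rho$ an irreducible $C_G(g)$-representation, $\chi:\mathbb{R}/k\mathbb{Z}\longrightarrow \mathbb{C}^*$ a character, subject to the single constraint that the generator $(\overline g,-k)$ of $N$ acts trivially, i.e. $\chi(k)=\rho(g)$ as a scalar (this makes sense since $g$ is central in $C_G(g)$ and Schur's lemma forces $\rho(g)$ to be a scalar).

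Next I would verify the freeness statement. The map $\pi_k^*$ sends the character $q^{\frac{1}{k}}$ of $\mathbb{R}/k\mathbb{Z}$ to the class $[\mathrm{id}_{C_G(g)}\odot_{\mathbb{C}}q^{\frac{1}{k}}]$ in $R\Lambda^k_G(g)$, and more generally identifies $R(\mathbb{R}/k\mathbb{Z})=\mathbb{Z}[q^{\pm\frac{1}{k}}]$ with the subring pulled back from the quotient $\Lambda^k_G(g)\twoheadrightarrow\mathbb{R}/k\mathbb{Z}$. To exhibit the basis: fix, as in Remark \ref{lambdabasis}, for each irreducible $\rho$ of $C_G(g)$ a preferred character $\chi_\rho$ with $\chi_\rho(k)=\rho(g)$; then every irreducible $\Lambda^k_G(g)$-representation with underlying $C_G(g)$-type $\rho$ is obtained from $\rho\odot_{\mathbb{C}}\chi_\rho$ by tensoring with a character pulled back from $\mathbb{R}/k\mathbb{Z}$ (since any two admissible $\chi$ differ by such a character, because $\chi/\chi_\rho$ kills $k\mathbb{Z}$... and hence factors through $\mathbb{R}/k\mathbb{Z}$ — one must be slightly careful that it descends correctly, using $\chi(k l)=1$). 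Thus $\{\rho\odot_{\mathbb{C}}\chi_\rho\}_{\rho}$ is a $\mathbb{Z}[q^{\pm\frac{1}{k}}]$-generating set, and it is a basis because the restriction map $\rho\odot_{\mathbb{C}}\chi_\rho\mapsto\rho$ and a counting/character-orthogonality argument show $R\Lambda^k_G(g)$ decomposes as a direct sum $\bigoplus_\rho \mathbb{Z}[q^{\pm\frac{1}{k}}]\cdot[\rho\odot_{\mathbb{C}}\chi_\rho]$ — exactly the argument of Lemma \ref{cl} with $\mathbb{T}$ replaced by $\mathbb{R}/k\mathbb{Z}$ and $q$ by $q^{\frac{1}{k}}$. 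Finally, the statement that restrictions to $C_G(g)$ recover the full $\mathbb{Z}$-basis $RC_G(g)$ is immediate from the pair description, since every irreducible $\rho$ does admit at least one admissible $\chi$ (namely $\chi_\rho$).

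The only real subtlety — and the step I expect to demand the most care — is the book-keeping around descent: making sure that a character $\chi:\mathbb{R}/k\mathbb{Z}\longrightarrow\mathbb{C}^*$ with $\chi(k)=\rho(g)$ genuinely assembles with $\rho$ into a \emph{well-defined} representation of the quotient $\Lambda^k_G(g)=(C_G(g)\times\mathbb{R})/\langle(g,-k)\rangle$, i.e. that $\rho\odot_{\mathbb{C}}\chi$ is trivial on the full relation subgroup. This is the place where the order $l$ of $g$ enters: one needs $\rho(g)^l=1$ to be compatible with $\chi$ being a character of $\mathbb{R}/kl\mathbb{Z}$ rather than merely of $\mathbb{R}$, and the clean way to see it is to note $\chi(kl)=\chi(k)^l=\rho(g)^l=\rho(g^l)=1$, so $\chi$ indeed factors through $\mathbb{R}/kl\mathbb{Z}$ and $\rho\odot_{\mathbb{C}}\chi$ through $\Lambda^k_G(g)$. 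Once this is pinned down, everything else is a transcription of Lemma \ref{cl}, so I would keep the write-up short, citing that lemma and Lemma 1.2 of \cite{Rez11} for the parallel $k=1$ case and only spelling out the modifications above.
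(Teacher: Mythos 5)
Your argument is correct and is exactly the transcription of Lemma \ref{cl} that the paper has in mind when it says "Analogous to Lemma \ref{cl}" and omits the proof. The one subtlety you flagged — that the domain of $\chi$ should really be $\mathbb{R}$ (or $\mathbb{R}/kl\mathbb{Z}$) rather than $\mathbb{R}/k\mathbb{Z}$ as the lemma's statement nominally says, with descent guaranteed by $\chi(kl)=\rho(g)^l=1$ — is a genuine imprecision in the statement, and your treatment of it is the right fix, matching how Lemma \ref{cl} itself phrases $\eta$ as a character of $\mathbb{R}$ and then observes it factors through $\mathbb{R}/l\mathbb{Z}$.
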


$R\Lambda_G^k(g)$ is a $\mathbb{Z}[q^{\pm}]-$module via the
inclusion $\mathbb{Z}[q^{\pm}]\longrightarrow
\mathbb{Z}[q^{\pm\frac{1}{k}}]$.

By Lemma \ref{lambdakrepresentation}, we can make a
$\mathbb{Z}[q^{\pm\frac{1}{k}}]$-basis
$\{\rho\odot_{\mathbb{C}}\chi_{\rho, k}\}$ for $R\Lambda^k_{G}(g)$
with each $\rho: G\longrightarrow Aut(G)$ an irreducible
$G$-representation and $\chi_{\rho, k}(t)= e^{2\pi i\frac{ct}{k}}$
with $c\in[0,1)$ such that $\rho(\sigma)=e^{2\pi ic}id$. This
collection $\{\rho\odot_{\mathbb{C}}\chi_{\rho, k}\}$ gives  a
$\mathbb{Z}[q^{\pm\frac{1}{k}}]-$basis of $R\Lambda^k_G(g)$.

\bigskip

There is a group isomorphism $\alpha_k:
\Lambda^k_G(g)\longrightarrow \Lambda_G(g)$ sending $[g, t]$ to
$[g, \frac{t}{k}]$. Observe that there is a pullback square of
groups
\begin{equation}\xymatrix{&\Lambda_G^k(g)\ar[r]^{\alpha_k}\ar[d]^{\pi_k}
&\Lambda_G(g)\ar[d]^{\pi}\\&\mathbb{R}/k\mathbb{Z}
\ar[r]^{t\mapsto
\frac{t}{k}}&\mathbb{R}/\mathbb{Z}}\label{alphaklambdagroup}\end{equation}

So we have the commutative square of a pushout square in the
category of $\lambda-$rings. \begin{equation}\xymatrix{&
R\Lambda^k_G(g)&R\Lambda_G(g)\ar[l]\\
&R(\mathbb{R}/k\mathbb{Z})\ar[u]&R\mathbb{T}\ar[u]\ar[l]}\label{alphaklambdaring}\end{equation}
It gives a canonical isomorphism of $\lambda-$rings
$R\Lambda_G(g)\longrightarrow R\Lambda_G^k(g)$ sending $q$ to
$q^{\frac{1}{k}}$. A good reference for $\lambda-$rings is Chapter
1 and 2, \cite{Yau10}.

\subsection{Quasi-elliptic cohomology}\label{orbqec} In this
section we introduce the definition of quasi-elliptic cohomology
$QEll^*_G$ in terms of orbifold K-theory, and then express it via
equivariant K-theory. We assume familiarity with
\cite{SegalequiK}. The reader may read Chapter 3 in \cite{ALRuan}
and \cite{Moe02} for a reference of orbifold K-theory.

When $G$ is finite, quasi-elliptic cohomology is defined from the
ghost loops in Definition \ref{ghostloopdef}. By proposition
\ref{skullofghost} and Example \ref{finiteghost}, we can see the
groupoid $GhLoop(X/\!\!/G)$ is equivalent to the disjoint union of
some translation groupoids.  Before describing this equivalent
groupoid $\Lambda(X/\!\!/G)$ in detail, we recall what inertia
groupoid is. A reference for that is Section 4, \cite{LU01}.


\begin{definition}Let $\mathbb{G}$ be a groupoid. The inertia
groupoid $I(\mathbb{G})$ of $\mathbb{G}$ is defined as follows.

An object $a$ is an arrow in $\mathbb{G}$ such that its source and
target are equal. 
A morphism $v$ joining two objects $a$ and $b$ is an arrow $v$ in
$\mathbb{G}$ such that $$v\circ a=b\circ v.$$ In other words, $b$
is the conjugate of $a$ by $v$, $b=v\circ a\circ v^{-1}$.

\label{inertiagroupoid}
\end{definition}

Let  $X$ a $G-$space.

\begin{example} The
inertia groupoid $I(X/\!\!/G)$ 
is the groupoid with

\textbf{objects}: the space $\coprod\limits_{g\in G}X^{g}$

\textbf{morphisms}: the space $\coprod\limits_{g, g'\in
G}C_G(g,g')\times X^g$ where $C_G(g,g')=\{\sigma\in
G|g'\sigma=\sigma g\}\subseteq G.$

For $x\in X^g$ and $(\sigma, g)\in C_G(g,g')\times X^g$, $(\sigma,
g)(x)=\sigma x\in X^{g'}.$ \label{torsionquotient}
\end{example}


\begin{definition}The groupoid $\Lambda(X/\!\!/G)$ has the same objects as
$I(X/\!\!/G)$ but richer morphisms
$$\coprod\limits_{g, g'\in G}\Lambda_G(g, g')\times X^g$$
where $\Lambda_G(g, g')$ is the quotient of $C_G(g, g')\times
\mathbb{R}$ under the equivalence $$(x, t)\sim (gx, t-1)=(xg',
t-1).$$ For an object $x\in X^g$ and a morphism $([\sigma, t],
g)\in \Lambda_G(g, g')\times X^g$, $([\sigma, t], g)(x)=\sigma
x\in X^{g'}.$ The composition of the morphisms is defined by
\begin{equation}[\sigma_1, t_1][\sigma_2,
t_2]=[\sigma_1\sigma_2, t_1+t_2].\end{equation}
\label{lambdaoidef}\end{definition}

\begin{definition}
The quasi-elliptic cohomology $QEll^*_G(X)$ is defined to be
$K^*_{orb}(GhLoop(X/\!\!/G))\cong K^*_{orb}(\Lambda(X/\!\!/G))$.
\label{qecdef}\end{definition} We can unravel the definition and
express it via equivariant K-theory.

Let $\sigma\in G$. The fixed point space $X^{\sigma}$ is a
$C_G(\sigma)-$space. We can define a $\Lambda_G(\sigma)-$action on
$X^{\sigma}$ by
$$[g, t]\cdot x:=g\cdot x.$$
Then we have
\begin{proposition}
\begin{equation}QEll^*_G(X)=\prod_{g\in
G_{conj}}K^*_{\Lambda_G(g)}(X^{g})=\bigg(\prod_{g\in
G}K^*_{\Lambda_G(g)}(X^{g})\bigg)^G,\end{equation}  where
$G_{conj}$ is a set of representatives of $G-$conjugacy classes in
$G$.
\end{proposition}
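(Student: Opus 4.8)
The plan is to unwind Definition \ref{qecdef} through the groupoid equivalence supplied by Proposition \ref{skullofghost} and Example \ref{finiteghost}, and then identify the orbifold K-theory of the resulting disjoint union of translation groupoids with the indicated product of equivariant K-theories. First I would invoke Proposition \ref{skullofghost}: when $G$ is finite, $GhLoop(X/\!\!/G)$ is equivalent to $\Lambda(X/\!\!/G)=\coprod_{[\sigma]}GhLoop_{\sigma}(X/\!\!/G)/\!\!/L^1_{\sigma}G\rtimes\mathbb{T}$, the coproduct ranging over conjugacy classes of $G$. By Example \ref{finiteghost}, for finite $G$ each space $GhLoop_{\sigma}(X/\!\!/G)$ is identified with the fixed-point space $X^{\sigma}$, and the acting group $L^1_{\sigma}G\rtimes\mathbb{T}$ is identified with $\Lambda_G(\sigma)=C_G(\sigma)\times\mathbb{R}/\langle(\sigma,-1)\rangle$, acting on $X^{\sigma}$ by $[g,t]\cdot x=g\cdot x$ as stated just before the proposition. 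Thus the skeleton $\Lambda(X/\!\!/G)$ is literally $\coprod_{\sigma\in G_{conj}}X^{\sigma}/\!\!/\Lambda_G(\sigma)$.

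Next I would use that orbifold K-theory is invariant under equivalences of groupoids (a standard fact, e.g. from \cite{ALRuan}) and that it sends disjoint unions to products:
\[
QEll^*_G(X)=K^*_{orb}(\Lambda(X/\!\!/G))\cong K^*_{orb}\Big(\coprod_{\sigma\in G_{conj}}X^{\sigma}/\!\!/\Lambda_G(\sigma)\Big)\cong\prod_{\sigma\in G_{conj}}K^*_{orb}(X^{\sigma}/\!\!/\Lambda_G(\sigma)).
\]
Since for a translation groupoid $K^*_{orb}(X^{\sigma}/\!\!/\Lambda_G(\sigma))=K^*_{\Lambda_G(\sigma)}(X^{\sigma})$ (this identification is recorded in the introduction, $K_{orb}(X/\!\!/G)=K_G(X)$), this already gives the first equality. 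For the second equality — the description as the $G$-invariants of the product over all of $G$ — I would note that $G$ acts on $\coprod_{\sigma\in G}X^{\sigma}$ by $\sigma\mapsto h\sigma h^{-1}$ together with the translation $X^{\sigma}\xrightarrow{h\cdot}X^{h\sigma h^{-1}}$, and this conjugation action is exactly the extra morphisms present in the full groupoid $\coprod_{\sigma\in G}X^{\sigma}/\!\!/\Lambda_G(\sigma)$ that are collapsed when one passes to the skeleton indexed by conjugacy-class representatives. Taking homotopy fixed points / invariants under this action recovers the product over representatives; concretely, a $G$-conjugation carries $K^*_{\Lambda_G(\sigma)}(X^{\sigma})$ isomorphically to $K^*_{\Lambda_G(h\sigma h^{-1})}(X^{h\sigma h^{-1})}$ via the change-of-group isomorphism induced by $\mathrm{ad}_h$, so the diagonal of each conjugacy orbit is a single copy and $\big(\prod_{\sigma\in G}K^*_{\Lambda_G(\sigma)}(X^{\sigma})\big)^G\cong\prod_{\sigma\in G_{conj}}K^*_{\Lambda_G(\sigma)}(X^{\sigma})$.

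The routine content is the bookkeeping of the $G$-action and checking it matches the stabilizer structure of the full (non-skeletal) groupoid; I expect the only genuinely delicate point to be verifying that the residual action of $C_G(\sigma)$ inside $\Lambda_G(\sigma)$ together with the outer conjugation by a transversal of $C_G(\sigma)$ in $G$ reassembles, on K-theory, precisely into the invariants of the $G$-action on the big product — i.e. that there is no discrepancy coming from the $\mathbb{R}/\langle(\sigma,-1)\rangle$ factor (it is fixed under conjugation because $\sigma$ is sent to its conjugate and the $\mathbb{R}$-coordinate is untouched). Once that compatibility is in place, the two displayed isomorphisms follow formally.
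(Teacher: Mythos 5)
Your argument is correct and matches the paper's intended approach. The paper gives no formal proof of this proposition, presenting it as a direct unraveling of Definition \ref{qecdef} via Proposition \ref{skullofghost} and Example \ref{finiteghost} (plus invariance of orbifold K-theory under groupoid equivalence and its behavior on disjoint unions), which is exactly the route you spell out.
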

Thus, for each $g\in\Lambda_G(g)$, we can define the projection
$$\pi_g: QEll^*_G(X)\longrightarrow K^*_{\Lambda_G(g)}(X^{g}).$$
For the singe point space, we have
\begin{equation}QEll^0_G(\mbox{pt})\cong\prod_{g\in G_{conj}} R\Lambda_G(g). \label{qecpt}\end{equation}

We have the ring homomorphism
$$\mathbb{Z}[q^{\pm}]=K^0_{\mathbb{T}}(\mbox{pt})\buildrel{\pi^*}\over\longrightarrow K^0_{\Lambda_G(g)}(\mbox{pt})\longrightarrow
K^0_{\Lambda_G(g)}(X)$$ where $\pi: \Lambda_G(g)\longrightarrow
\mathbb{T}$ is the projection defined in (\ref{pizq}) and the
second is via the collapsing map $X\longrightarrow \mbox{pt}$. So
$QEll_G^*(X)$ is naturally a
$\mathbb{Z}[q^{\pm}]-$algebra. 

\subsection{Properties}\label{propertiesqec} In this section
 we discuss some properties of $QEll^*_G$,
including the restriction map, the K\"{u}nneth map on it, its
tensor product and the change-of-group isomorphism.

Since each homomorphism $\phi: G\longrightarrow H$ induces a
well-defined homomorphism $\phi_{\Lambda}:
\Lambda_G(\tau)\longrightarrow\Lambda_H(\phi(\tau))$ for each
$\tau$ in $G$, we can get the proposition below directly.
\begin{proposition}For each homomorphism $\phi: G\longrightarrow H$, it induces a ring map
$$\phi^*: QEll^*_H(X)\longrightarrow QEll^*_G(\phi^*X)$$ characterized by the commutative diagrams

\begin{equation}\begin{CD}QEll^*_H(X) @>{\phi^*}>> QEll^*_G(\phi^*X) \\ @V{\pi_{\phi(\tau)}}VV  @V{\pi_{\tau}}VV  \\
K^*_{\Lambda_H(\phi(\tau))}(X^{\phi(\tau)}) @>{\phi^*_{\Lambda}}>>
 K^*_{\Lambda_G(\tau)}(X^{\phi(\tau)})\end{CD}\end{equation} for any $\tau \in G$. So $QEll^*_G$ is functorial in $G$.
\label{restrictionq}\end{proposition}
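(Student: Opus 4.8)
The plan is to build $\phi^{*}$ componentwise out of the product description
$QEll^{*}_G(X)\cong\bigl(\prod_{g\in G}K^{*}_{\Lambda_G(g)}(X^{g})\bigr)^{G}$
of the preceding proposition, so that the only inputs are the interaction of $\phi$ with fixed-point sets and with the groups $\Lambda_G(-)$, plus the standard change-of-group functoriality of equivariant K-theory.

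First I would record the two compatibilities that make everything run. Fix $\tau\in G$. On the one hand $\phi$ restricts to a homomorphism $C_G(\tau)\to C_H(\phi(\tau))$, since $x\tau=\tau x$ forces $\phi(x)\phi(\tau)=\phi(\tau)\phi(x)$; and $(x,t)\mapsto(\phi(x),t)$ carries the relation subgroup $\langle(\tau,-1)\rangle$ into $\langle(\phi(\tau),-1)\rangle$, so it descends to the homomorphism $\phi_{\Lambda}\colon\Lambda_G(\tau)\to\Lambda_H(\phi(\tau))$ used in the statement, and $\phi_{\Lambda}$ commutes with the projections $\pi$ of (\ref{pizq}) down to $\mathbb{T}$. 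On the other hand, a point of $X$ is fixed by $\tau$ acting through $\phi$ exactly when it is fixed by $\phi(\tau)$, so $(\phi^{*}X)^{\tau}=X^{\phi(\tau)}$ as spaces; under this identification the $\Lambda_G(\tau)$-action $[g,t]\cdot x=\phi(g)\cdot x$ on $(\phi^{*}X)^{\tau}$ is precisely the pullback along $\phi_{\Lambda}$ of the $\Lambda_H(\phi(\tau))$-action on $X^{\phi(\tau)}$.

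Granting this, functoriality of equivariant K-theory under a homomorphism of groups together with a compatible map of spaces (cf.\ \cite{SegalequiK}) yields, for each $\tau\in G$, a ring homomorphism $\phi_{\Lambda}^{*}\colon K^{*}_{\Lambda_H(\phi(\tau))}(X^{\phi(\tau)})\to K^{*}_{\Lambda_G(\tau)}(X^{\phi(\tau)})$. Composing with the projection $\pi_{\phi(\tau)}$ out of $QEll^{*}_H(X)$ and taking the product over all $\tau\in G$ gives a ring homomorphism into $\prod_{\tau\in G}K^{*}_{\Lambda_G(\tau)}(X^{\phi(\tau)})$. Its image lies in the $G$-invariant subring: conjugation by $g\in G$ sends the $\tau$-factor to the $g\tau g^{-1}$-factor, and since $\phi(g\tau g^{-1})=\phi(g)\phi(\tau)\phi(g)^{-1}$ the conjugation isomorphisms of the $\Lambda$-groups and of the fixed-point spaces are natural in $\phi_{\Lambda}$, so the comparison square commutes. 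Identifying $\bigl(\prod_{\tau\in G}K^{*}_{\Lambda_G(\tau)}(X^{\phi(\tau)})\bigr)^{G}$ with $QEll^{*}_G(\phi^{*}X)$ produces the desired ring map $\phi^{*}$; the defining square in the statement holds by construction, and since the $\pi_{\tau}$ are jointly injective this property characterizes $\phi^{*}$. Functoriality in $G$ then follows from $(\psi\circ\phi)_{\Lambda}=\psi_{\Lambda}\circ\phi_{\Lambda}$ on each $\Lambda_G(\tau)$ and the contravariance of K-theory pullback, so $(\psi\circ\phi)^{*}=\phi^{*}\circ\psi^{*}$ and $\mathrm{id}_G^{*}=\mathrm{id}$.

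Conceptually, the same map arises because $\phi$ and the tautological $G$-map $\phi^{*}X\to X$ induce a homomorphism of groupoids $\Lambda(\phi^{*}X/\!\!/G)\to\Lambda(X/\!\!/H)$ (Definition \ref{lambdaoidef}), and $\phi^{*}$ is just the induced map on orbifold K-theory, automatically multiplicative. The sole point that needs genuine (but routine) verification is the one flagged above: that the componentwise assembly respects the $G$-actions, equivalently that passing to $G$-conjugacy-class representatives is independent of the choices, equivalently that the groupoid homomorphism is well defined. I expect that to be the only, and minor, obstacle; everything else is the formal behaviour of equivariant K-theory under change of groups.
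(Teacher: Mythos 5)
Your proof is correct and follows the same route as the paper, which dispatches this proposition in one line by observing that $\phi$ induces $\phi_{\Lambda}\colon\Lambda_G(\tau)\to\Lambda_H(\phi(\tau))$; you have simply supplied the routine verifications (the identification $(\phi^{*}X)^{\tau}=X^{\phi(\tau)}$, compatibility with the $\mathbb{T}$-projections, $G$-conjugation equivariance, and the characterization via joint injectivity of the $\pi_{\tau}$) that the paper leaves implicit.
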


Moreover, we can define K\"{u}nneth map of quasi-elliptic
cohomology induced from that on equivariant $K$-theory.

Let $G$ and $H$ be two finite groups. $X$ is a $G$-space and $Y$
is a $H$-space. Let $\sigma\in G$ and $\tau\in H$. Let
$\Lambda_G(\sigma)\times_{\mathbb{T}}\Lambda_H(\tau)$ denote the
fibered product of the morphisms
$$\Lambda_G(\sigma)\buildrel{\pi}\over\longrightarrow
\mathbb{T}\buildrel{\pi}\over\longleftarrow\Lambda_H(\tau).$$ It
is isomorphic to $\Lambda_{G\times H}(\sigma, \tau)$ under the
correspondence
$$([\alpha, t], [\beta, t])\mapsto [\alpha, \beta, t].$$

Consider the composition below
\begin{align*}T: &K_{\Lambda_G(\sigma)}(X^{\sigma})\otimes
K_{\Lambda_H(\tau)}(Y^{\tau})\longrightarrow
K_{\Lambda_{G}(\sigma)\times\Lambda_{H}(\tau)}(X^{\sigma}\times
Y^{\tau})\buildrel{res}\over\longrightarrow \\
&K_{\Lambda_{G}(\sigma)\times_{\mathbb{T}}\Lambda_{H}(\tau)}(X^{\sigma}\times
Y^{\tau}) \buildrel{\cong}\over\longrightarrow K_{\Lambda_{G\times
H}(\sigma, \tau)}((X\times Y)^{(\sigma, \tau)}).\end{align*} where
the first map is the K\"{u}nneth map of equivariant K-theory, the
second is the restriction map  and the third is the isomorphism
induced by the group isomorphism $\Lambda_{G\times H}(\sigma,
\tau)\cong\Lambda_G(\sigma)\times_{\mathbb{T}}\Lambda_H(\tau)$.

For any $g\in G$, let $1$ denote the trivial line bundle over
$X^g$ and let $q$ denote the line bundle $1\odot_{\mathbb{C}} q$
over $X^g$. The map $T$ above sends both $1\otimes q$ and
$q\otimes 1$ to $q$. So we get the well-defined map
\begin{equation}K^*_{\Lambda_G(\sigma)}(X^{\sigma})\otimes_{\mathbb{Z}[q^{\pm}]}K^*_{\Lambda_H(\tau)}(Y^{\tau})\longrightarrow
 K_{\Lambda_{G\times H}(\sigma, \tau)}((X\times
Y)^{(\sigma, \tau)}).\label{ku}\end{equation}

\begin{definition}The tensor produce of quasi-elliptic cohomology is defined by
\begin{equation}QEll^*_G(X)\otimes_{\mathbb{Z}[q^{\pm}]}QEll^*_H(Y)
\cong\prod_{\sigma\in G_{conj}\mbox{,   } \tau\in
H_{conj}}K^*_{\Lambda_G(\sigma)}(X^{\sigma})\otimes_{\mathbb{Z}[q^{\pm}]}K^*_{\Lambda_H(\tau)}(Y^{\tau}).\label{qectensor}\end{equation}
The direct product of the maps defined in (\ref{ku}) gives a ring
homomorphism
$$QEll^*_G(X)\otimes_{\mathbb{Z}[q^{\pm}]}QEll^*_H(Y)\longrightarrow
QEll^*_{G\times H}(X\times Y),$$ which is the K\"{u}nneth map of
quasi-elliptic cohomology.
\end{definition}

By Lemma \ref{cl} we have
$$QEll^*_G(\mbox{pt})\otimes_{\mathbb{Z}[q^{\pm}]}QEll^*_H(\mbox{pt})=QEll^*_{G\times H}(\mbox{pt}).$$
More generally, we have the proposition below.
\begin{proposition}

Let $X$ be a $G\times H-$space with trivial $H-$action and let
$\mbox{pt}$ be the single point space with trivial $H-$action.
 Then we have
$$QEll_{G\times H}(X)\cong QEll_G(X)\otimes_{\mathbb{Z}[q^{\pm}]} QEll_H(\mbox{pt}).$$

Especially, if $G$ acts trivially on $X$, we have
$$QEll_G(X)\cong   QEll(X)\otimes_{\mathbb{Z}[q^{\pm}]}
QEll_G(\mbox{pt}).$$ Here $QEll^*(X)$ is
$QEll^*_{\{e\}}(X)=K^*_{\mathbb{T}}(X)$.
\end{proposition}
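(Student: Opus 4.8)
The plan is to reduce the statement to the Künneth-type identity for $QEll^*$ that has already been established, together with the product decomposition of $QEll^*_{G\times H}$ into equivariant K-theories indexed by conjugacy classes. First I would observe that for a $G\times H$-space $X$ on which $H$ acts trivially, a pair $(\sigma,\tau)\in G\times H$ acts on $X$ through $\sigma$ alone, so the fixed-point space $X^{(\sigma,\tau)}$ equals $X^{\sigma}$. Likewise the relevant extended centralizer splits: $C_{G\times H}(\sigma,\tau)=C_G(\sigma)\times C_H(\tau)$ since $H$ is a separate factor, and hence $\Lambda_{G\times H}(\sigma,\tau)\cong\Lambda_G(\sigma)\times_{\mathbb{T}}\Lambda_H(\tau)$ by the fibred-product description recorded just before \eqref{ku}. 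Under these identifications the $\Lambda_{G\times H}(\sigma,\tau)$-action on $X^{(\sigma,\tau)}$ is simply the $\Lambda_G(\sigma)$-action on $X^{\sigma}$ inflated along the projection $\Lambda_G(\sigma)\times_{\mathbb{T}}\Lambda_H(\tau)\to\Lambda_G(\sigma)$, with $\Lambda_H(\tau)$ acting trivially (exactly as in the point case).

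Next I would assemble the pieces. Using Proposition analogous to \eqref{qectensor}, $QEll^*_G(X)\otimes_{\mathbb{Z}[q^{\pm}]}QEll^*_H(\mbox{pt})$ decomposes as the product over $\sigma\in G_{conj}$ and $\tau\in H_{conj}$ of $K^*_{\Lambda_G(\sigma)}(X^{\sigma})\otimes_{\mathbb{Z}[q^{\pm}]}R\Lambda_H(\tau)$. On the other hand $QEll^*_{G\times H}(X)$ decomposes as the product over $(G\times H)_{conj}$, and since conjugacy classes in a direct product are products of conjugacy classes, this is the product over the same index set $(\sigma,\tau)$ of $K^*_{\Lambda_{G\times H}(\sigma,\tau)}(X^{(\sigma,\tau)})$. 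So it suffices to produce, for each fixed pair $(\sigma,\tau)$, a natural ring isomorphism
\begin{equation*}
K^*_{\Lambda_G(\sigma)}(X^{\sigma})\otimes_{\mathbb{Z}[q^{\pm}]}R\Lambda_H(\tau)\;\xrightarrow{\ \cong\ }\;K^*_{\Lambda_{G\times H}(\sigma,\tau)}(X^{\sigma}).
\end{equation*}
This is precisely the K\"unneth map \eqref{ku} in the special case $Y=\mbox{pt}$, and the point is that it is an isomorphism here because one factor is $R\Lambda_H(\tau)=K^*_{\Lambda_H(\tau)}(\mbox{pt})$, a free $\mathbb{Z}[q^{\pm}]$-module by Lemma \ref{cl}, so the relevant $\operatorname{Tor}$-terms vanish; concretely, writing $\Lambda_{G\times H}(\sigma,\tau)\cong\Lambda_G(\sigma)\times_{\mathbb{T}}\Lambda_H(\tau)$ and using the representation-ring freeness, equivariant K-theory of $X^{\sigma}$ with respect to the fibred product is obtained from $K^*_{\Lambda_G(\sigma)}(X^{\sigma})$ by base change along $\mathbb{Z}[q^{\pm}]\to R\Lambda_H(\tau)$.

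I would then check compatibility of these pointwise isomorphisms with the maps defining the Künneth homomorphism of $QEll^*$, so that the product of them is a ring isomorphism, and observe naturality in $X$ is immediate from naturality of equivariant K-theory and of \eqref{ku}. The special case where $G$ also acts trivially on $X$ follows by taking $H=G$ acting trivially and noting $QEll^*(X)=K^*_{\mathbb{T}}(X)$ by definition. The main obstacle I anticipate is the flatness/freeness bookkeeping: one must be careful that the tensor product over $\mathbb{Z}[q^{\pm}]$ really computes $K^*$ of the fibred-product group, i.e. that the Künneth spectral sequence degenerates, and this is exactly where Lemma \ref{cl} (freeness of $R\Lambda_H(\tau)$ over $R\mathbb{T}$) is essential. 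Beyond that, the argument is a matter of matching indexing sets and unwinding the actions, which is routine.
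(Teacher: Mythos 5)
Your proposal follows essentially the same route as the paper's own proof: both decompose $QEll^*_{G\times H}(X)$ into a product of equivariant $K$-theories over conjugacy classes $(\sigma,\tau)$, use $\Lambda_{G\times H}(\sigma,\tau)\cong\Lambda_G(\sigma)\times_{\mathbb{T}}\Lambda_H(\tau)$ together with $X^{(\sigma,\tau)}=X^{\sigma}$, and then invoke the K\"unneth-type isomorphism $K_{\Lambda_G(\sigma)\times_{\mathbb{T}}\Lambda_H(\tau)}(X^{\sigma})\cong K_{\Lambda_G(\sigma)}(X^{\sigma})\otimes_{\mathbb{Z}[q^{\pm}]}R\Lambda_H(\tau)$. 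The only difference is that you explicitly record the freeness of $R\Lambda_H(\tau)$ over $\mathbb{Z}[q^{\pm}]$ from Lemma \ref{cl} as the justification for that last step, a point the paper leaves implicit.
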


\begin{proof}

\begin{align*}QEll_{G\times H}(X) &=\prod\limits_{\substack{g\in
G_{conj}
\\ h\in H_{conj}}}K_{\Lambda_{G\times H}(g, h)}(X^{(g,
h)})\cong \prod\limits_{\substack{g\in G_{conj} \\ h\in H_{conj}}
}K_{\Lambda_{G}(g)\times_{\mathbb{T}} \Lambda_{H}(h)}(X^{g})\\
&\cong \prod\limits_{\substack{g\in G_{conj} \\ h\in H_{conj}}}
K_{\Lambda_{G}(g)}(X^{g})\otimes_{\mathbb{Z}[q^{\pm}]}
K_{\Lambda_H(h)}(\mbox{pt})=
QEll_G(X)\otimes_{\mathbb{Z}[q^{\pm}]} QEll_H(\mbox{pt}).
\end{align*}

\end{proof}


\begin{proposition}
If $G$ acts freely on $X$,
$$QEll^*_G(X)\cong QEll^*_e(X/G).$$
\end{proposition}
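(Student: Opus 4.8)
The plan is to reduce the statement to the product expression
$QEll^*_G(X)\cong\prod_{g\in G_{conj}}K^*_{\Lambda_G(g)}(X^g)$
established above. The crucial observation is that a free $G$-action has $X^g=\emptyset$ for every $g\neq e$, so every factor indexed by a non-trivial conjugacy class vanishes and only the factor at the identity survives. Hence $QEll^*_G(X)\cong K^*_{\Lambda_G(e)}(X^e)=K^*_{\Lambda_G(e)}(X)$, and the whole problem is now to understand this single equivariant K-group.

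Next I would unravel the group $\Lambda_G(e)$. By (\ref{lambdadef}) we have $\Lambda_G(e)=C_G(e)\times\mathbb{R}/\langle(e,-1)\rangle=G\times\mathbb{T}$, and the $\Lambda_G(e)$-action on $X^e=X$ is the original $G$-action on the first factor together with the trivial $\mathbb{T}$-action on the second. Since the right-hand side of the statement is $QEll^*_e(X/G)=K^*_{\mathbb{T}}(X/G)$ with $\mathbb{T}$ acting trivially, the claim becomes the identity
$K^*_{G\times\mathbb{T}}(X)\cong K^*_{\mathbb{T}}(X/G)$.

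For this I would invoke the standard fact (see \cite{SegalequiK}) that when $G$ acts freely the quotient $X\to X/G$ is a principal $G$-bundle, so induction along it identifies the category of $G$-equivariant vector bundles on $X$ with the category of vector bundles on $X/G$, giving $K^*_G(X)\cong K^*(X/G)$. Because the extra $\mathbb{T}$-factor acts trivially on $X$ and commutes with the $G$-action, this equivalence is $\mathbb{T}$-equivariantly natural and therefore upgrades to an isomorphism $K^*_{G\times\mathbb{T}}(X)\cong K^*_{\mathbb{T}}(X/G)$. Finally I would check that this is an isomorphism of $\mathbb{Z}[q^{\pm}]$-algebras: on both sides the class $q=1\odot_{\mathbb{C}}q$ is pulled back from the $\mathbb{T}$-factor via the collapsing map, and the identification above visibly preserves the $\mathbb{T}$-factor, so $q\mapsto q$. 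I do not expect a genuine obstacle; the only point demanding a little care is this last bookkeeping of the $\mathbb{Z}[q^{\pm}]$-module structure, which is immediate once the class $q$ is tracked through the identification.
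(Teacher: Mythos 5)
Your argument is correct and follows essentially the same route as the paper's proof: the free action forces $X^\sigma=\emptyset$ for $\sigma\neq e$ so only the identity component survives, $\Lambda_G(e)=G\times\mathbb{T}$, and the free-quotient identification $K^*_{G\times\mathbb{T}}(X)\cong K^*_{\mathbb{T}}(X/G)$ (Segal) finishes it. The paper is terser and writes the collapse through $K^*_{\Lambda_G(\sigma)/C_G(\sigma)}(X^\sigma/C_G(\sigma))$, but the content is identical; your extra paragraph tracking the $\mathbb{Z}[q^{\pm}]$-module structure is a sound addition rather than a deviation.
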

\begin{proof}
Since $G$ acts freely on $X$, $$X^{\sigma}=\begin{cases}\emptyset,
&\text{if $\sigma\neq e$;}\\ X, &\text{if
$\sigma=e$.}\end{cases}$$ Thus,
$QEll^*_G(X)\cong\prod\limits_{\sigma\in
G_{conj}}K^*_{\Lambda_G(\sigma)/C_G(\sigma)}(X^{\sigma}/C_G(\sigma))\cong
K^*_{\mathbb{T}}(X/G).$

Since $\mathbb{T}$ acts trivially on $X$, we have
$K^*_{\mathbb{T}}(X/G)=QEll^*_e(X/G)$ by definition. It is
isomorphic to $K^*(X/G)\otimes R\mathbb{T}$.\end{proof}


We also have the change-of-group isomorphism as  in equivariant
$K$-theory.

Let $H$ be a subgroup of $G$ and $X$ a $H$-space. Let $\phi:
H\longrightarrow G$ denote the inclusion homomorphism. The
change-of-group map $\rho^G_H: QEll^*_G(G\times_HX)\longrightarrow
QEll^*_H(X)$ is defined as the composite
\begin{equation}\rho^G_H:   QEll^*_G(G\times_HX)\buildrel{\phi^*}\over\longrightarrow
QEll^*_H(G\times_H X)\buildrel{i^*}\over\longrightarrow
QEll_H^*(X)\label{changeofgroup}
\end{equation}
where $\phi^*$ is the restriction map and $i: X\longrightarrow
G\times_HX$ is the $H-$equivariant map defined by $i(x)=[e, x].$

\begin{proposition} The change-of-group map
$$\rho^G_H: QEll^*_G(G\times_H X)\longrightarrow
QEll^*_H(X)$$ defined in (\ref{changeofgroup}) is an
isomorphism.\end{proposition}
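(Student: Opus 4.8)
The plan is to reduce the statement, one fixed-point summand at a time, to the change-of-group isomorphism $K^*_G(G\times_H Z)\cong K^*_H(Z)$ for equivariant $K$-theory (\cite{SegalequiK}). Write $Y=G\times_H X$. By the splitting $QEll^*_G(Y)=\prod_{g\in G_{conj}}K^*_{\Lambda_G(g)}(Y^g)$ and the analogous one for $QEll^*_H(X)$, and since each $H$-conjugacy class of $H$ fuses to a unique $G$-conjugacy class of $G$, it suffices to produce, for each $g\in G$, an isomorphism $K^*_{\Lambda_G(g)}(Y^g)\cong\prod K^*_{\Lambda_H(h)}(X^h)$, the product running over the $H$-conjugacy classes $[h]$ of $H$ with $h$ $G$-conjugate to $g$, and then to check that the resulting isomorphism $QEll^*_G(Y)\cong QEll^*_H(X)$ is the map $\rho^G_H=i^*\circ\phi^*$ of (\ref{changeofgroup}).

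First I would describe $Y^g$ with its $\Lambda_G(g)$-action. With $H$ acting on $G\times X$ by $(a,x)\cdot h=(ah,h^{-1}x)$ and $G$ acting on $Y$ by $g'\cdot[a,x]=[g'a,x]$, one checks that $[a,x]\in Y^g$ exactly when $a^{-1}ga\in H$ and $x\in X^{a^{-1}ga}$, and that the class $[a^{-1}ga]_H$ is a $\Lambda_G(g)$-invariant of the component of $[a,x]$; it realizes every $H$-class of $H$ fusing to $[g]$. Fixing such a class and a representative $h=a_0^{-1}ga_0\in H$, the corresponding $\Lambda_G(g)$-invariant piece $Y_{[h]}$ of $Y^g$ is $\Lambda_G(g)$-equivariantly isomorphic to $\Lambda_G(g)\times_{\Lambda_K(g)}X^{h}$, where $K=C_G(g)\cap a_0Ha_0^{-1}$, $\Lambda_K(g)=(K\times\mathbb{R})/\langle(g,-1)\rangle$ is a closed subgroup of $\Lambda_G(g)$ (note $g\in K$), and $X^{h}$ carries the $\Lambda_K(g)$-action transported from its natural $\Lambda_H(h)$-action along the isomorphism $\Lambda_K(g)\xrightarrow{\ \cong\ }\Lambda_H(h)$, $[k,t]\mapsto[a_0^{-1}ka_0,t]$, induced by conjugation by $a_0$ (everything is well defined because $a_0^{-1}Ka_0=C_H(h)$ and $h$ acts trivially on $X^{h}$, so the relations modulo $(g,-1)$ survive). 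For $g\in H$ itself one may take $a_0=e$, so $K=C_H(g)$ and $\Lambda_K(g)=\Lambda_H(g)$ is an honest subgroup of $\Lambda_G(g)$.

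Granting this, Segal's change-of-group isomorphism for the compact Lie group $\Lambda_G(g)$ and its closed subgroup $\Lambda_K(g)$ gives
\[K^*_{\Lambda_G(g)}(Y_{[h]})=K^*_{\Lambda_G(g)}\bigl(\Lambda_G(g)\times_{\Lambda_K(g)}X^{h}\bigr)\cong K^*_{\Lambda_K(g)}(X^{h})\cong K^*_{\Lambda_H(h)}(X^{h}),\]
and taking the product over the components of $Y^g$ gives the $g$-factor. For the final compatibility, given $\tau\in H$, Proposition \ref{restrictionq} identifies the $[\tau]_H$-component of $\rho^G_H$ with the composite $K^*_{\Lambda_G(\tau)}(Y^\tau)\xrightarrow{\phi^*_\Lambda}K^*_{\Lambda_H(\tau)}(Y^\tau)\xrightarrow{(i|)^*}K^*_{\Lambda_H(\tau)}(X^\tau)$, where $i|\colon X^\tau\to Y^\tau$ sends $x$ to $[e,x]$; since $\tau\in H$ the map $i|$ factors through the single piece $Y_{[\tau]}\cong\Lambda_G(\tau)\times_{\Lambda_H(\tau)}X^\tau$ as the slice inclusion, so this composite is exactly Segal's isomorphism on that summand. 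Hence $\rho^G_H$ is the product of the isomorphisms just constructed, so it is an isomorphism. The main obstacle is purely organizational: pinning down the fixed-point decomposition of $G\times_H X$ and, crucially, promoting the subgroups $K$ to the groups $\Lambda_K(g)$ with the correct induced actions on the $X^{h}$, so that Segal's theorem applies verbatim piece by piece. (Alternatively one could imitate the functors of Proposition \ref{globalghost} with $G\times_H X$ in place of $G/H$ to obtain directly an equivalence $GhLoop((G\times_H X)/\!\!/G)\simeq GhLoop(X/\!\!/H)$ and then apply $K_{orb}$, but checking compatibility with $\rho^G_H$ is then less transparent.)
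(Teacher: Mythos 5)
Your argument is correct and runs essentially parallel to the paper's: both decompose $(G\times_H X)^\sigma$ into pieces indexed by the $H$-conjugacy classes fusing to $[\sigma]_G$, realize each piece as a space induced up from a closed subgroup of $\Lambda_G(\sigma)$ identified by conjugation with $\Lambda_H(\tau)$, and then apply the equivariant $K$-theory change-of-group isomorphism summand by summand before checking this agrees with $\rho^G_H$. The only difference is bookkeeping---the paper writes each piece as $\Lambda_G(\tau)\times_{\Lambda_H(\tau)}X^\tau$ and conjugates the fixed-point space into $(G\times_H X)^\sigma$, while you conjugate the subgroup (your $K=C_G(g)\cap a_0Ha_0^{-1}$ is exactly $g_\tau C_H(\tau)g_\tau^{-1}$) so as to work inside $\Lambda_G(g)$ directly.
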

\begin{proof}
For any $\tau\in H_{conj}$, there exists a unique
$\sigma_{\tau}\in G_{conj}$ such that
$\tau=g_{\tau}\sigma_{\tau}g_{\tau}^{-1}$ for some $g_{\tau}\in
G$.  Consider the maps \begin{equation}\begin{CD}
\Lambda_G(\tau)\times_{\Lambda_H(\tau)}X^{\tau}@>{[[a, t], x
]\mapsto [a, x]}>> (G\times_H X)^{\tau}@>{[u, x]\mapsto
[g_{\tau}^{-1}u, x]}>>
(G\times_HX)^{\sigma}.\end{CD}\end{equation} The first map is
$\Lambda_G(\tau)-$equivariant and the second is equivariant with
respect to the homomorphism $c_{g_{\tau}}:
\Lambda_{G}(\sigma)\longrightarrow \Lambda_G(\tau)$ sending $[u,
t]\mapsto [g_{\tau} u g_{\tau}^{-1}, t]$. Taking a coproduct over
all the elements $\tau\in H_{conj}$ that are conjugate to
$\sigma\in G_{conj}$ in $G$, we get an isomorphism
$$\gamma_{\sigma}: \coprod_{\tau}\Lambda_G(\tau)\times_{\Lambda_H(\tau)} X^{\tau}\longrightarrow
(G\times_HX)^{\sigma}$$ which is $\Lambda_G(\sigma)-$equivariant
with respect to $c_{g_{\tau}}$. Then we have the map
\begin{equation}\gamma:=\prod_{\sigma\in G_{conj}}\gamma_{\sigma}:
\prod_{\sigma\in G_{conj}}
K^*_{\Lambda_G(\sigma)}(G\times_HX)^{\sigma}\longrightarrow
\prod_{\sigma\in
G_{conj}}K^*_{\Lambda_G(\sigma)}(\coprod_{\tau}\Lambda_G(\tau)\times_{\Lambda_H(\tau)}
X^{\tau})
\end{equation}

It is straightforward to check the change-of-group map coincide
with the composite \begin{align*} QEll^*_{G}(G\times_H
X)\buildrel{\gamma}\over\longrightarrow \prod_{\sigma\in
G_{conj}}K^*_{\Lambda_G(\sigma)}(\coprod_{\tau}\Lambda_G(\tau)\times_{\Lambda_H(\tau)}
X^{\tau})\longrightarrow &\prod_{\tau\in
H_{conj}}K^*_{\Lambda_H(\tau)}(X^{\tau})\\&=QEll^*_{H}(X)\end{align*}
with  the second map  the change-of-group isomorphism in
equivariant $K-$theory.
\end{proof}


\section{Power Operation}\label{poweroperation}


In Section \ref{s2complete} we define power operations for
equivariant quasi-elliptic cohomology $QEll_G^*(-)$. We show in
Theorem \ref{main1p} that they satisfy the axioms that Ganter
established in Definition 4.3, \cite{Gan06} for equivariant power
operations.

The power operation of quasi-elliptic cohomology is of the form
\begin{align*}\mathbb{P}_n=&\prod_{(\underline{g}, \sigma)\in
(G\wr\Sigma_n)_{conj}}\mathbb{P}_{(\underline{g},\sigma)}:&
\\
&QEll^*_G(X)\longrightarrow QEll^*_{G\wr\Sigma_n}(X^{\times n})
=&\prod_{(\underline{g}, \sigma)\in
(G\wr\Sigma_n)_{conj}}K_{\Lambda_{G\wr\Sigma_n}(\underline{g},
\sigma)}((X^{\times n})^{(\underline{g}, \sigma)}),\end{align*}
where $\mathbb{P}_n$ maps a bundle over the groupoid
$$\Lambda(X/\!\!/G)$$ to a bundle over
$$\Lambda(X^{\times n}/\!\!/(G\wr\Sigma_n)),$$ and each
$\mathbb{P}_{(\underline{g},\sigma)}$ maps a bundle over
$$\Lambda(X/\!\!/G)$$ to a $\Lambda_{G\wr\Sigma_n}(\underline{g}, \sigma)-$bundle over the space $(X^{\times
n})^{(\underline{g},
\sigma)}/\!\!/\Lambda_{G\wr\Sigma_n}(\underline{g}, \sigma).$


\bigskip



We construct each $\mathbb{P}_{(\underline{g}, \sigma)}$ as the
composition below.
\begin{align}QEll^*_G(X)&\buildrel{U^*}\over\longrightarrow
K^*_{orb}(\Lambda^1_{(\underline{g}, \sigma)}(X))
\buildrel{(\mbox{ })_k^{\Lambda}}\over\longrightarrow
K^*_{orb}(\Lambda^{var}_{(\underline{g}, \sigma)}(X)) \label{pgs1}\\
&\buildrel{\boxtimes}\over\longrightarrow
K^*_{orb}(d_{(\underline{g},
\sigma)}(X))\buildrel{f^*_{(\underline{g},
\sigma)}}\over\longrightarrow
K^*_{\Lambda_{G\wr\Sigma_n}(\underline{g}, \sigma)}((X^{\times
n})^{(\underline{g}, \sigma)}),\label{pgs2}\end{align}
 where
$k\in\mathbb{Z}$ and $(i_1, \cdots i_k)$ goes over all the
$k-$cycles of $\sigma$. We explain the first three functors in
detail in Section \ref{s2complete}. In Section \ref{111} we
construct the isomorphism $f_{(\underline{g}, \sigma)}$ between
the groupoid $$\Lambda(X^{\times n}/\!\!/(G\wr\Sigma_n))$$ and the
groupoid $d((X/\!\!/G)\wr\Sigma_n)$ constructed in Definition
\ref{dxgsigman}. With it, it is convenient to construct the
explicit formula of the power operation.

\subsection{Loop Space of Symmetric Power}\label{111}


\subsubsection{The groupoid $d((X/\!\!/G)\wr\Sigma_n)$}\label{setupad}



For an introduction of actions of wreath product $G\wr\Sigma_n$ on
$X^{\times n}$ and symmetric power $\mathbb{G}\wr\Sigma_n$ of a
groupoid $\mathbb{G}$, we refer the readers to Section 4.1,
\cite{Gan07}. The symmetric power $(X/\!\!/G)\wr\Sigma_n$ is
isomorphic to $X^{\times n}/\!\!/(G\wr\Sigma_n)$.

Before introducing the groupoid $d((X/\!\!/G)\wr\Sigma_n)$, we
need to introduce several ingredients.

\begin{definition}[$\Lambda^k(X/\!\!/G)$] The groupoid $\Lambda^k(X/\!\!/G)$ has the same objects as
$\Lambda(X/\!\!/G)$ but different morphisms
$$\coprod\limits_{g, g'\in G}\Lambda^k_G(g, g')\times X^g$$
where $\Lambda^k_G(g, g')$ is the quotient of $C_G(g, g')\times
\mathbb{R}$ under the equivalence $$(x, t)\sim (gx, t-k)=(xg',
t-k).$$ For an object $x\in X^g$ and a morphism $([\sigma, t],
g)\in \Lambda^k_G(g, g')\times X^g$, $([\sigma, t], g)(x)=\sigma
x\in X^{g'}.$ The composition of the morphisms is defined by
\begin{equation}[\sigma_1, t_1][\sigma_2,
t_2]=[\sigma_1\sigma_2, t_1+t_2].\end{equation}
\label{lambdaoidefk}\end{definition}

\begin{definition}[Fibred wreath product] The groupoid
$\Lambda^k(X/\!\!/G)\wr_{\mathbb{T}}\Sigma_N$ is defined to be the
subgroupoid of the symmetric power
$\Lambda^k(X/\!\!/G)\wr\Sigma_N$ with the same objects but only
those morphisms
$$(([b_1, t_1], \cdots
[b_{N}, t_N],    \tau), x)$$ with all the $t_j$s having the same
image under the quotient map
$\mathbb{R}/k{\mathbb{Z}}\longrightarrow\mathbb{R}/\mathbb{Z}$.

The isotropy group of each object in $\prod\limits_1^N X^g$ is
$\Lambda_G^k(g)\wr_{\mathbb{T}}\Sigma_N$. \end{definition}

Let $Y$ be an $H-$space.
\begin{definition}[Fibred product and fibred coproduct]
The groupoid
$$\big(\Lambda^{k_1}(X/\!\!/G)\wr_{\mathbb{T}}\Sigma_{N_1}\big)\times_{\mathbb{T}}\big(\Lambda^{k_2}(Y/\!\!/H)\wr_{\mathbb{T}}\Sigma_{N_2}\big)$$
is defined to be the subgroupoid of
$\Lambda^{k_1}(X/\!\!/G)\wr_{\mathbb{T}}\Sigma_{N_1}\times
\Lambda^{k_2}(Y/\!\!/H)\wr_{\mathbb{T}}\Sigma_{N_2}$ with the same
objects but only those morphisms $$\big((([g_{1}, t_{1, 1}],
\cdots [g_{ N_1}, t_{1, N_1}],    \sigma_1), x), (([h_1, t_{2,
1}], \cdots [h_{N_2}, t_{2, N_2}],    \sigma_2), y)\big)$$ with
all the $t_{i, j_i}$s having the same image under the quotient map
$\mathbb{R}/k_i{\mathbb{Z}}\longrightarrow\mathbb{R}/\mathbb{Z}$,
for $i=1, 2$ and $j_i=1, \cdots N_i$.

The isotropy group of each object in $\prod\limits_1^{N_1}
X^g\prod\limits_1^{N_2} Y^h$ is
$$\big(\Lambda_G^{k_1}(g)\wr_{\mathbb{T}}\Sigma_{N_1}\big)\times_{\mathbb{T}}
\big(\Lambda_H^{k_2}(h)\wr_{\mathbb{T}}\Sigma_{N_2}\big).$$

\bigskip
We can define the fibred coproduct
$\big(\Lambda^{k_1}(X/\!\!/G)\wr_{\mathbb{T}}\Sigma_{N_1}\big)\coprod_{\mathbb{T}}\big(\Lambda^{k_2}(Y/\!\!/H)\wr_{\mathbb{T}}\Sigma_{N_2}\big)$
in the same way.
\end{definition}

Let $\sigma\in\Sigma_n$ correspond to the partition $n =
\sum\limits_kkN_k$, i.e. it has $N_k$ $k-$cycles. Assume that for
each cycle $(i_1, \cdots i_k)$ of $\sigma$,  $i_1< i_2\cdots <
i_k$.

For $(\underline{g}, \sigma)\in G\wr\Sigma_n$, we consider the
orbits of the bundle
$G\times\underline{\underline{n}}\longrightarrow\underline{\underline{n}}$
under the action by $(\underline{g}, \sigma)$. The orbits of
$\underline{\underline{n}}$ under the action by $\sigma$
corresponds to the cycles in the cycle decomposition of $\sigma$.
The bundle
$G\times\underline{\underline{n}}\longrightarrow\underline{\underline{n}}$
is the disjoint union of the $G-$bundles

$$\bigsqcup_{(i_1 \cdots i_k)} (G\times \{i_1, \cdots i_k\}\longrightarrow \{i_1, \cdots i_k\})$$ where $(i_1, \cdots i_k)$ goes over all the
cycles of $\sigma$. Each bundle $G\times \{i_1, \cdots
i_k\}\longrightarrow \{i_1, \cdots i_k\}$ is an orbit of
$G\times\underline{\underline{n}}\longrightarrow\underline{\underline{n}}$
under the action by $(\underline{g}, \sigma)$.

Let $C_G(g, g')$ denote $\{x\in G| gx=xg'\}.$ Two $G-$subbundles
$$G\times \{i_1, \cdots i_k\}\longrightarrow \{i_1, \cdots
i_k\}\mbox{    and }G\times \{j_1, \cdots j_m\}\longrightarrow
\{j_1, \cdots j_m\}$$ are $(\underline{g}, \sigma)-$equivariant
equivalent if and only if $k=m$ and $C_G(g_{i_k}\cdots g_{i_1},
g_{j_k}\cdots g_{j_1})$ is nonempty. For each $k$-cycle $i=(i_1,
\cdots i_k)$ of $\sigma$, let $W^{\sigma}_i$ denote the set of all
the $G-$subbundles $G\times \{j_1, \cdots j_m\}\longrightarrow
\{j_1, \cdots j_m\}$ that are $(\underline{g},
\sigma)-$isomorphic to $G\times \{i_1, \cdots i_k\}\longrightarrow
\{i_1, \cdots i_k\}$. There is a bijection between $W^{\sigma}_i$
and the set $$\{j=(j_1, \cdots j_k)\mbox{ } | \mbox{  }(j_1,
\cdots j_k)\mbox{ is a k-cycle of }\sigma\mbox{ and }
C_G(g_{i_k}\cdots g_{i_1}, g_{j_k}\cdots g_{j_1})\mbox{ is
nonempty}\}.$$ Let $M^{\sigma}_{i}$ denote the size of the set
$W^{\sigma}_i$.  Let $\alpha^i_1, \alpha^i_2, \cdots
\alpha^i_{M^{\sigma}_i}$ denote all the elements of the set
$W^{\sigma}_i$. Obviously, $i=(i_1, \cdots i_k)$ is in
$W^{\sigma}_i$. So we can assume it is $\alpha^i_1$.

For any $k-$cycle $i$ and $m-$cycle $j$ of $\sigma$, if $k=m$ and
$C_G(g_{i_k}\cdots g_{i_1}, g_{j_k}\cdots g_{j_1})$ is nonempty,
$W^{\sigma}_i$ and $W^{\sigma}_j$ are the same set. Otherwise,
they are disjoint. The set of all the $k-$cycles of $\sigma$ can
be divided into the disjoint union of several $W^{\sigma}_i$s. We
can pick a set of representatives $\theta_k$ of $k-$cycles of
$\sigma$ such that the set of $k-$cycles of $\sigma$ equals the
disjoint union
$$\coprod_{i\in\theta_k}W^{\sigma}_i.$$

\begin{definition}[$d_{(\underline{g}, \sigma)}(X)$]
The groupoid $d_{(\underline{g}, \sigma)}(X)$ is defined to be a
full subgroupoid of
$\prod\limits_{k}\!{_{\mathbb{T}}}\prod\limits_{i\in\theta_k}\!{_{\mathbb{T}}}\Lambda^k(X/\!\!/G)\wr_{\mathbb{T}}\Sigma_{M^{\sigma}_i}$
with objects the points of the space
$$\prod_k\prod_{(i_1, \cdots
i_k)}X^{g_{i_k}\cdots g_{i_1}},$$ where the second product goes
over all the $k-$cycles of $\sigma$. \label{ADgroupoid}
\end{definition}

\begin{definition}[$d((X/\!\!/G)\wr\Sigma_n)$] The groupoid
$d((X/\!\!/G)\wr\Sigma_n)$ is defined to be
$$\coprod\limits_{(\underline{g},
\sigma)}\!{_{\mathbb{T}}}\mbox{   }d_{(\underline{g},
\sigma)}(X)$$ where $(\underline{g}, \sigma)$ goes over
$(G\wr\Sigma_n)_{conj}$. \label{dxgsigman}
\end{definition}

\begin{proposition}
Each $d_{(\underline{g}, \sigma)}(X)$ is isomorphic to the
translation groupoid $$\big(\prod_k\prod_{(i_1, \cdots
i_k)}X^{g_{i_k}\cdots
g_{i_1}}\big)/\!\!/\big(\prod\limits_{k}\!{_{\mathbb{T}}}\prod\limits_{j\in\theta_k}\!{_{\mathbb{T}}}\Lambda_G^k(\alpha_j)\wr_{\mathbb{T}}\Sigma_{M^{\sigma}_j}\big)$$
where $\alpha_j=g_{j_k}\cdots g_{j_1}$ with $j=(j_1, \cdots j_k)$.
\label{trans}\end{proposition}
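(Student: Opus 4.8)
The plan is to produce an explicit isomorphism of groupoids between $d_{(\underline{g},\sigma)}(X)$ and the translation groupoid
$$\Big(\prod_k\prod_{(i_1,\cdots i_k)}X^{g_{i_k}\cdots g_{i_1}}\Big)\Big/\!\!\Big/\Big(\prod_k{_{\mathbb{T}}}\prod_{i\in\theta_k}{_{\mathbb{T}}}\Lambda_G^k(\Gamma_{i,1})\wr_{\mathbb{T}}\Sigma_{M^{\sigma}_i}\Big).$$
On objects the two groupoids already agree: both have object space $\prod_k\prod_{(i_1,\cdots i_k)}X^{g_{i_k}\cdots g_{i_1}}$, so the functor will be the identity on objects. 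The content is entirely in the morphisms. First I would fix, once and for all, for each $k$-cycle class $i\in\theta_k$ and each $p=1,\dots,M^{\sigma}_i$, a chosen element $\sigma_{i,p,1}\in C_G(\Gamma_{i,1},\Gamma_{i,p})$ (equivalently $\sigma_{i,1,p}\in C_G(\Gamma_{i,p},\Gamma_{i,1})$), using that $W^{\sigma}_i$ is by definition the set of $k$-cycles $(\underline{g},\sigma)$-isomorphic to the representative $\alpha^i_1=i$, so these conjugating elements exist; this is exactly the data already used in the paragraph preceding the Proposition to define the $\wr_{\mathbb{T}}$-action.

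The key step is to define, for a morphism of $d_{(\underline{g},\sigma)}(X)$ given by the tuple $\big(\times_k\times_{i\in\theta_k}([a^i_1,t^i_1],\cdots,[a^i_{M^{\sigma}_i},t^i_{M^{\sigma}_i}],\tau_i)\big)$ with $[a^i_j,t^i_j]\in\Lambda^k_G(\Gamma_{i,\tau_i(j)},\Gamma_{i,j})$, the corresponding element of the wreath-type group by conjugating each entry back into $\Lambda^k_G(\Gamma_{i,1})$: set $[b^i_j,t^i_j]:=[\sigma_{i,1,\tau_i(j)},0]\,[a^i_j,t^i_j]\,[\sigma_{i,j,1},0]\in\Lambda^k_G(\Gamma_{i,1})$, keeping $\tau_i$ unchanged. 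The inverse assignment uses the $\sigma_{i,p,1}$'s in the other direction. I would then check three things: (1) this is a bijection on morphism sets (immediate, since conjugation by the fixed $\sigma_{i,p,1}$'s is invertible and the ``same image in $\mathbb{R}/\mathbb{Z}$'' condition on the $t^i_j$ is literally the same on both sides, matching the $\wr_{\mathbb{T}}$ and $\prod_{\mathbb{T}}$ decorations); (2) it is compatible with source, target, and the action on objects — here one compares the target formula in Definition \ref{ADgroupoid}, $\gamma_{i,\tau_i(j)}\cdot[a^i_{\tau_i(j)},t^i_{\tau_i(j)}]$, with the action formula written just before the Proposition, $\gamma_{i,\tau_i(j)}\cdot\big([\sigma_{i,\tau(j),1},0][b^i_{\tau_i(j)},t^i_{\tau_i(j)}][\sigma_{i,1,\tau_i(j)},0]\big)$, and the two agree precisely because of the substitution defining $b^i_j$; (3) it is multiplicative, i.e. it intertwines the composition \eqref{compositionagsigma} in $d_{(\underline{g},\sigma)}(X)$ with multiplication in $\prod_k{_{\mathbb{T}}}\prod_{i\in\theta_k}{_{\mathbb{T}}}\Lambda_G^k(\Gamma_{i,1})\wr_{\mathbb{T}}\Sigma_{M^{\sigma}_i}$, which is a bookkeeping check that the conjugating factors $\sigma_{i,1,\tau_i(\rho^{-1}(p))}$ inserted at the middle cancel when one composes $[b^i_j b^i_{\tau_i^{-1}(j)}, t^i_j+p^i_{\tau_i^{-1}(j)}]$ against the raw composite in $\Lambda^k_G(\Gamma_{i,\ast},\Gamma_{i,\ast})$.

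The main obstacle I expect is purely organizational rather than conceptual: keeping the indexing of cycles, the permutations $\tau_i\in\Sigma_{M^{\sigma}_i}$, and the conjugating elements $\sigma_{i,p,q}$ straight so that the multiplicativity check in step (3) goes through cleanly — in particular verifying that the choice of the $\sigma_{i,p,1}$'s is genuinely immaterial (different choices give isomorphic, indeed canonically conjugate, translation groupoids) and that the ``common image in $\mathbb{R}/\mathbb{Z}$'' constraint is preserved under composition, which follows because that image is additive and each $\tau_i$ just permutes indices within a single $W^{\sigma}_i$. Once these are in place, the functor is a fully faithful bijective-on-objects homomorphism of groupoids, hence an isomorphism, and since the target is visibly a translation groupoid $Y/\!\!/H$ with $Y=\prod_k\prod_{(i_1,\cdots i_k)}X^{g_{i_k}\cdots g_{i_1}}$ and $H=\prod_k{_{\mathbb{T}}}\prod_{i\in\theta_k}{_{\mathbb{T}}}\Lambda_G^k(\Gamma_{i,1})\wr_{\mathbb{T}}\Sigma_{M^{\sigma}_i}$, this proves the Proposition.
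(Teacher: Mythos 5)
Your proposal is correct and is exactly what the paper has in mind. The paper itself gives no proof beyond the words ``The proof is straightforward''; your identity-on-objects functor, conjugating each $\Lambda^k_G(\Gamma_{i,\tau_i(j)},\Gamma_{i,j})$-entry into $\Lambda^k_G(\Gamma_{i,1})$ via the fixed $\sigma_{i,p,q}$, is precisely the inverse of the action formula set up in the paragraph preceding the Proposition, and the remaining verifications (bijectivity, source/target compatibility, multiplicativity via cancellation of the middle conjugating factors, and preservation of the common-image-in-$\mathbb{R}/\mathbb{Z}$ constraint) are the routine bookkeeping you flag.
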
 The proof is straightforward.

To study $K_{orb}(d_{(\underline{g}, \sigma)}(X))$, we start by
studying the representation ring of the wreath product
$$\prod\limits_{k}\prod\limits_{j\in\theta_k}\Lambda_G^k(\alpha_j)\wr\Sigma_{M^{\sigma}_j}.$$ Theorem \ref{repwr} gives all
the irreducible representations of a wreath product. It is Theorem
Theorem 4.3.34 in \cite{JK81}.

\begin{theorem}Let $\{\rho_k\}^N_1$ be a complete family of irreducible representations of $G$ and let $V_k$ be the corresponding representation space for $\rho_k$. Let $(n)$ be a partition
of $n$. $(n)=(n_1, \cdots n_N).$ Let $D_{(n)}$ be the
representation $$\rho_1^{\otimes
n_1}\otimes\cdots\otimes\rho_N^{\otimes n_N}$$ of $G^{\times N}$
on $V_1^{\otimes n_1}\otimes\cdots\otimes V_N^{\otimes n_N}.$ Let
$\Sigma_{(n)}=\Sigma_{n_1}\times\cdots\times\Sigma_{n_N}.$

Let $(D_{(n)})^{\sim}$ be the extension of $D_{(n)}$ from
$G^{\times n}$ to $G\wr\Sigma_{(n)}$ defined by
\begin{align*}&(D_{(n)})^{\sim}((g_{1, 1},\cdots g_{1, n_1}, \cdots g_{N, 1}, \cdots g_{N, n_N}; \sigma))\\ &(v_{1,
1}\otimes\cdots\otimes v_{1, n_1}\otimes\cdots\otimes
v_{N,1}\otimes\cdots\otimes v_{N,
n_N})\\=&\bigotimes^{N}_{k=1}\rho_k(g_{k,
1})v_{k,\sigma_k^{-1}(1)}\otimes\cdots\otimes\rho_k(g_{k,
n_k})v_{k,\sigma_k^{-1}(n_k)},\end{align*} where
$\sigma=\sigma_1\times\cdots\times\sigma_N$ with each
$\sigma_k\in\Sigma_{n_k}$.

Let $D_{\tau}$ with $\tau\in R\Sigma_{(n)}$ be the representation
of $G\wr\Sigma_{(n)}$ defined by \begin{equation}D_{\tau}((g_{1,
n_1},\cdots g_{N,n_N};
\sigma)):=\tau(\sigma).\label{drepsymm}\end{equation}

Then,
\begin{align*}\{Ind|^{G\wr\Sigma_n}_{G\wr\Sigma_{(n)}}(D_{(n)})^{\sim}\otimes
D_{\tau}| &(n)=(n_1, \cdots n_N)\mbox{  goes over all the partitions};\\
&\tau\mbox{    goes over all the irreducible representations of }
\Sigma_{(n)}.\}\end{align*}goes over all the irreducible
representations  of $G\wr\Sigma_{n}$
nonrepeatedly.\label{repwr}\end{theorem}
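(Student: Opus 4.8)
The plan is to run the Mackey little-group machine (Clifford theory) on the normal subgroup $G^{\times n}\triangleleft G\wr\Sigma_n$. First I would recall that every irreducible representation of $G^{\times n}$ has the form $V_{k_1}\otimes\cdots\otimes V_{k_n}$ for a tuple $(k_1,\dots,k_n)\in\{1,\dots,N\}^n$, and that $\Sigma_n$ acts on the set of these by permuting tensor factors. Two such representations lie in the same $\Sigma_n$-orbit exactly when the underlying multisets $\{k_1,\dots,k_n\}$ coincide, so the orbits are indexed precisely by partitions $(n)=(n_1,\dots,n_N)$ of $n$, where $n_k$ counts the indices equal to $k$. In each orbit I pick the block-ordered representative $D_{(n)}=\rho_1^{\otimes n_1}\otimes\cdots\otimes\rho_N^{\otimes n_N}$. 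Its inertia subgroup in $\Sigma_n$ is the Young subgroup $\Sigma_{(n)}=\Sigma_{n_1}\times\cdots\times\Sigma_{n_N}$, since a permutation fixes the isomorphism class of $D_{(n)}$ if and only if it carries each block of factors labelled by a fixed $\rho_k$ into itself --- here one uses that $\{\rho_k\}$ are pairwise non-isomorphic. Consequently the inertia subgroup of $D_{(n)}$ inside $G\wr\Sigma_n$ is $G\wr\Sigma_{(n)}$.

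Next I would verify that $D_{(n)}$ extends to $G\wr\Sigma_{(n)}$, the extension being exactly $(D_{(n)})^{\sim}$ as written in the statement. This amounts to checking directly from the displayed formula that it is a group homomorphism: the $G^{\times n}$-part acts factorwise as in $D_{(n)}$, the $\Sigma_{(n)}$-part permutes the tensor factors within each block, and the two are compatible with the wreath-product law $(\underline g,\sigma)(\underline h,\tau)=(g_ih_{\sigma^{-1}(i)},\sigma\tau)$ precisely because $\Sigma_{(n)}$ preserves the block structure. One also checks it restricts to $D_{(n)}$ on $G^{\times n}$. Given such an extension, the standard Clifford/Gallagher correspondence says that the irreducible representations of $G\wr\Sigma_n$ lying above the $\Sigma_n$-orbit of $D_{(n)}$ are exactly
$$Ind|^{G\wr\Sigma_n}_{G\wr\Sigma_{(n)}}\big((D_{(n)})^{\sim}\otimes D_\tau\big),$$
where $D_\tau$ ranges over the irreducibles of $G\wr\Sigma_{(n)}$ trivial on $G^{\times n}$, i.e.\ the pullbacks along $G\wr\Sigma_{(n)}\twoheadrightarrow\Sigma_{(n)}$ of the irreducible representations $\tau$ of $\Sigma_{(n)}$, and that the assignment $\tau\mapsto Ind|^{G\wr\Sigma_n}_{G\wr\Sigma_{(n)}}((D_{(n)})^{\sim}\otimes D_\tau)$ is a bijection onto that set.

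Finally I would assemble the count. By Clifford's theorem every irreducible of $G\wr\Sigma_n$ restricts to $G^{\times n}$ as an isotypic sum over a single $\Sigma_n$-orbit of $G^{\times n}$-irreducibles, and each such orbit has a unique block-ordered representative $D_{(n)}$. Hence letting $(n)$ run over all partitions of $n$ and $\tau$ run over all irreducibles of $\Sigma_{(n)}=\Sigma_{n_1}\times\cdots\times\Sigma_{n_N}$ (the latter being, classically, parametrised by $N$-tuples of partitions of the $n_k$) produces every irreducible of $G\wr\Sigma_n$ exactly once: different $(n)$ give representations over different orbits, hence non-isomorphic, while for fixed $(n)$ the injectivity above separates the various $\tau$; exhaustiveness is built into the Mackey machine. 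The main obstacle I anticipate is the bookkeeping in showing that $(D_{(n)})^{\sim}$ is a well-defined homomorphism extending $D_{(n)}$ --- matching ``permutation of tensor factors within a block'' against the wreath multiplication law --- after which the result is a direct application of Clifford theory and the known description of the irreducibles of symmetric groups, all carried out in \cite{JK81}.
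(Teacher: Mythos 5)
Your proof is correct and follows the same route the paper endorses: the paper does not prove Theorem~\ref{repwr} itself but cites it from \cite{JK81}, and in the discussion of the analogous Theorem~\ref{repfibwr} it explicitly says the proof in \cite{JK81} proceeds by Clifford theory --- precisely the orbit/inertia-group/extension/Clifford--Gallagher argument you sketch. Since your reconstruction matches the cited argument, there is nothing further to compare.
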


The proof of Theorem \ref{repfibwr} is analogous to that of
Theorem \ref{repwr} in \cite{JK81}, applying Clifford's theory in
\cite{Clifford1937} and \cite{CurtisReiner62}. Note that
$$\{\rho_1\otimes_{\mathbb{Z}[q^{\pm}]}\cdots
\otimes_{\mathbb{Z}[q^{\pm}]} \rho_n\mbox{    }|\mbox{   Each
}\rho_j\mbox{    is    an irreducible representation of     }
\Lambda_G(\sigma).\}$$ goes over all the irreducible
representations of the fibred product
$$\Lambda_G(\sigma)\times_{\mathbb{T}}\cdots \times_{\mathbb{T}}
\Lambda_G(\sigma).$$

\begin{theorem}Let $\{\rho_k\}^N_1$ be a basis of the $\mathbb{Z}[q^{\pm}]-$module $R\Lambda_G(\sigma)$ and let $V_k$ be the corresponding representation space for $\rho_k$.
Let $(n)$ be a partition of $n$. $(n)=(n_1, \cdots n_N).$ Let
$D^{\mathbb{T}}_{(n)}$ be the
$\Lambda_G(\sigma)^{\times_{\mathbb{T}} n}-$representation
$$\rho_1^{\otimes_{\mathbb{Z}[q^{\pm}]}
n_1}\otimes_{\mathbb{Z}[q^{\pm}]}\cdots\otimes_{\mathbb{Z}[q^{\pm}]}\rho_N^{\otimes_{\mathbb{Z}[q^{\pm}]}
n_N}$$ on the space $V_1^{\otimes n_1}\otimes\cdots\otimes
V_N^{\otimes n_N}.$ Let
$\Sigma_{(n)}=\Sigma_{n_1}\times\cdots\times\Sigma_{n_N}.$

Let $(D^{\mathbb{T}}_{(n)})^{\sim}$ be the extension of $D_{(n)}$
from $\Lambda_G(\sigma)^{\times_{\mathbb{T}} n}$ to
$\Lambda_G(\sigma)\wr_{\mathbb{T}}\Sigma_{(n)}$ defined by
\begin{align*}&(D_{(n)}^{\mathbb{T}})^{\sim}(([g_{1, 1}, t],\cdots [g_{1, n_1},  t], \cdots [g_{N, 1},  t], \cdots [g_{N, n_N},   t]; \sigma))
\\ &(v_{1, 1}\otimes\cdots\otimes v_{1, n_1}\otimes\cdots\otimes
v_{N,1}\otimes\cdots\otimes v_{N,
n_N})\\=&{\bigotimes\nolimits_{\mathbb{Z}[q^{\pm}]}}\rho_k([g_{k,
1},
t])v_{k,\sigma_k^{-1}(1)}\otimes_{\mathbb{Z}[q^{\pm}]}\cdots\otimes_{\mathbb{Z}[q^{\pm}]}\rho_k([g_{k,
n_k},   t])v_{k,\sigma_k^{-1}(n_k)},\end{align*} where $k$ is from
$1$ to $N$ and $\sigma=\sigma_1\times\cdots\times\sigma_N$ with
each $\sigma_k\in\Sigma_{n_k}$.

Let $D^{\mathbb{T}}_{\tau}$ with $\tau\in R\Sigma_{(n)}$ be the
representation of $\Lambda_G(\sigma)\wr_{\mathbb{T}}\Sigma_{(n)}$
defined by \begin{equation}D^{\mathbb{T}}_{\tau}(([g_{1, n_1},
t],\cdots [g_{N,n_N},   t];
\sigma)):=\tau(\sigma).\label{drepsymm}\end{equation}

Then,
\begin{align*}\{Ind|^{\Lambda_G(\sigma)\wr_{\mathbb{T}}\Sigma_n}_{\Lambda_G(\sigma)\wr_{\mathbb{T}}\Sigma_{(n)}}&(D^{\mathbb{T}}_{(n)})^{\sim}\otimes
D^{\mathbb{T}}_{\tau}\mbox{   }|\mbox{    } (n)=(n_1, \cdots n_N)\mbox{  goes over all the partitions};\\
&\tau\mbox{    goes over all the irreducible representations of }
\Sigma_{(n)}.\}\end{align*}goes over all the irreducible
representation nonrepeatedly of
$\Lambda_G(\sigma)\wr_{\mathbb{T}}\Sigma_{n}$.\label{repfibwr}\end{theorem}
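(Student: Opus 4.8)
The plan is to run, over the ground ring $\mathbb{Z}[q^{\pm}]$, the same argument that proves Theorem \ref{repwr} (\cite{JK81}): Clifford theory for the base subgroup together with Gallagher's extension theorem, with every tensor product over $\mathbb{Z}$ replaced by one over $\mathbb{Z}[q^{\pm}]$. Write $W:=\Lambda_G(\sigma)\wr_{\mathbb{T}}\Sigma_n$ and let $N$ be the kernel of the projection $W\longrightarrow\Sigma_n$. Unwinding the definition of $\wr_{\mathbb{T}}$ shows that $N=\Lambda_G(\sigma)\times_{\mathbb{T}}\cdots\times_{\mathbb{T}}\Lambda_G(\sigma)$ ($n$ fibred factors), so $1\to N\to W\to\Sigma_n\to 1$ is exact and $\Sigma_n\cong W/N$ acts on $N$ by permuting the factors. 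By the remark preceding the statement the complex irreducibles of $N$ are exactly the external products $\rho_{k(1)}\otimes_{\mathbb{Z}[q^{\pm}]}\cdots\otimes_{\mathbb{Z}[q^{\pm}]}\rho_{k(n)}$; iterating the K\"unneth isomorphism for quasi-elliptic cohomology of a point and invoking Lemma \ref{cl}, those obtained by letting each $\rho_{k(j)}$ run through the fixed basis $\{\rho_k\}_1^N$ of $R\Lambda_G(\sigma)$ form a $\mathbb{Z}[q^{\pm}]$-basis of $RN$, indexed by functions $k\colon\{1,\dots,n\}\to\{1,\dots,N\}$.

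I would first analyse the $\Sigma_n$-action on this family. The $\Sigma_n$-orbit of $\phi=\rho_{k(1)}\otimes_{\mathbb{Z}[q^{\pm}]}\cdots\otimes_{\mathbb{Z}[q^{\pm}]}\rho_{k(n)}$ is recorded precisely by the multiplicity vector $(n_1,\dots,n_N)$ of $k$, i.e.\ by a partition $(n)$ of $n$; after reordering the factors we may take $\phi=D^{\mathbb{T}}_{(n)}$. Its inertia group $I_\phi=\mathrm{Stab}_W(\phi)$ is the full preimage in $W$ of the Young subgroup $\Sigma_{(n)}=\Sigma_{n_1}\times\cdots\times\Sigma_{n_N}$, i.e.\ $I_\phi=\Lambda_G(\sigma)\wr_{\mathbb{T}}\Sigma_{(n)}$. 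Hence $\mathrm{Irr}(W)$ is the disjoint union, over one such $\phi$ per partition $(n)$, of the sets $\mathrm{Irr}(W\mid\phi)$ of irreducibles of $W$ lying over $\phi$.

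Next I would check that $\phi=D^{\mathbb{T}}_{(n)}$ extends to $I_\phi$, with $(D^{\mathbb{T}}_{(n)})^{\sim}$ of the statement being such an extension: since $\Sigma_{(n)}$ permutes only tensor factors carrying the same basis representation $\rho_k$, the permutation operators on $V_1^{\otimes n_1}\otimes\cdots\otimes V_N^{\otimes n_N}$ are well defined, and checking that the stated formula is compatible with the componentwise multiplication of $\Lambda_G(\sigma)\wr_{\mathbb{T}}\Sigma_{(n)}$ is the computation of \cite{JK81} restricted to the subgroup on which all $\mathbb{T}$-coordinates agree. Granting the extension, Gallagher's theorem gives a bijection $\mathrm{Irr}(\Sigma_{(n)})=\mathrm{Irr}(I_\phi/N)\to\mathrm{Irr}(I_\phi\mid\phi)$, $\tau\mapsto(D^{\mathbb{T}}_{(n)})^{\sim}\otimes D^{\mathbb{T}}_{\tau}$, and the Clifford correspondence (\cite{Clifford1937}, \cite{CurtisReiner62}) gives a bijection $\mathrm{Ind}^W_{I_\phi}\colon\mathrm{Irr}(I_\phi\mid\phi)\to\mathrm{Irr}(W\mid\phi)$. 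Composing these and letting $(n)$ range over partitions of $n$ produces exactly the displayed family, each member occurring once; that this family is a $\mathbb{Z}[q^{\pm}]$-basis of $R(\Lambda_G(\sigma)\wr_{\mathbb{T}}\Sigma_n)$ --- the intended reading of ``nonrepeatedly'' --- then follows from the orthogonality and counting bookkeeping of \cite{JK81} performed over $\mathbb{Z}[q^{\pm}]$ using Lemma \ref{cl}.

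The Clifford skeleton above is formal; the step I expect to require genuine work is its interaction with the $\mathbb{T}$-fibring. One must check at each stage --- for the base group $N$, the inertia group $I_\phi$, the extension $(D^{\mathbb{T}}_{(n)})^{\sim}$, and the inductions and restrictions in play --- that the constraint ``all $\mathbb{T}$-coordinates equal'' is conjugation-invariant and is preserved, so that the whole argument runs inside $\Lambda_G(\sigma)\wr_{\mathbb{T}}\Sigma_n$ rather than inside the larger $\Lambda_G(\sigma)\wr\Sigma_n$. One should also be precise about completeness: $\Lambda_G(\sigma)$, and hence $W$, has infinitely many complex irreducibles (the characters of $\mathbb{R}$ contribute a $\mathbb{Z}$-family of $q$-twists of each type), so the correct assertion is the one about a $\mathbb{Z}[q^{\pm}]$-basis, which on restriction to $C_G(\sigma)\wr\Sigma_n$ (the locus of vanishing $\mathbb{T}$-coordinates) specialises under $q\mapsto 1$ to the classical James--Kerber basis of $R(C_G(\sigma)\wr\Sigma_n)$.
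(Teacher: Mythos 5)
Your proposal is correct and follows essentially the same route the paper intends: the paper's own ``proof'' consists only of the remark that the result is analogous to Theorem \ref{repwr} of \cite{JK81} via Clifford theory (\cite{Clifford1937}, \cite{CurtisReiner62}), together with the observation that the $\mathbb{Z}[q^{\pm}]$-tensor products of irreducibles of $\Lambda_G(\sigma)$ give the irreducibles of the fibred product, and you have simply executed that Clifford-theoretic argument (orbits of $\Sigma_n$ on $\mathrm{Irr}(N)$, inertia groups, extension to $I_\phi$, Gallagher, Clifford correspondence) while tracking the $\mathbb{T}$-fibring. Your closing caveat — that ``goes over all irreducibles nonrepeatedly'' must be read as ``gives a $\mathbb{Z}[q^{\pm}]$-basis of $R(\Lambda_G(\sigma)\wr_{\mathbb{T}}\Sigma_n)$,'' since each irreducible type carries a $\mathbb{Z}$-family of $q$-twists — is the right reading and is consistent with Lemma \ref{cl} and the paper's use of the theorem downstream.
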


From Theorem \ref{repwr}, the representation ring of each
$\Lambda_G^k(\alpha_j)\wr\Sigma_{M^{\sigma}_j}$ is a
$\mathbb{Z}[q^{\pm\frac{1}{k}}]-$module. Thus, the representation
ring of each $\Lambda_G^k(\alpha_j)\wr\Sigma_{M^{\sigma}_j}$ is a
$\mathbb{Z}[q^{\pm}]-$module via the
map
$$\mathbb{Z}[q^{\pm}]\longrightarrow\mathbb{Z}[q^{\pm\frac{1}{k}}]\mbox{,
}q\mapsto q^{\pm\frac{1}{k}}.$$ The representation ring
$$R(\prod_{k}\prod_{j\in\theta_k}\Lambda_G^k(\alpha_j)\wr\Sigma_{M^{\sigma}_j})\cong
\bigotimes_{k}\bigotimes_{j\in\theta_k}R(\Lambda_G^k(\alpha_j)\wr\Sigma_{M^{\sigma}_j})$$
is a $\mathbb{Z}[q^{\pm}]-$module. So is
$R(\prod\limits_{k}\!{_{\mathbb{T}}}\prod\limits_{j\in\theta_k}\!{_{\mathbb{T}}}\Lambda_G^k(\alpha_j)\wr_{\mathbb{T}}\Sigma_{M^{\sigma}_j}).$

Moreover, $K_{orb}(d_{(\underline{g}, \sigma)}(X))$ is a
$\mathbb{Z}[q^{\pm}]-$module via the map
\begin{equation}R(\prod\limits_{k}\!{_{\mathbb{T}}}\prod\limits_{j\in\theta_k}\!{_{\mathbb{T}}}\Lambda_G^k(\alpha_j)\wr_{\mathbb{T}}\Sigma_{M^{\sigma}_j})\cong
K_{orb}^0(d_{(\underline{g}, \sigma)}(\mbox{pt}))\longrightarrow
K_{orb}^0(d_{(\underline{g}, \sigma)}(X)),\end{equation} which is
induced by $X\longrightarrow\mbox{pt}$.

\subsubsection{The isomorphism $f_{(\underline{g}, \sigma)}$}\label{FunctorF}

Before we show in Theorem \ref{iprm} that the groupoids
$\Lambda(X^{\times n}/\!\!/(G\wr\Sigma_n))$ and
$d((X/\!\!/G)\wr\Sigma_n)$ are isomorphic,  we recall some
properties of $C_{G\wr\Sigma_n}((\underline{g}, \sigma),
(\underline{g}', \sigma'))$.

$(\underline{h}, \tau)$ is in $C_{G\wr \Sigma_n}
((\underline{g},\sigma), (\underline{g}',\sigma'))$ if and only if
$\tau\sigma'=\sigma\tau$ and
$g_{\sigma(\tau(i))}h_{\tau(i)}=h_{\tau(\sigma'(i))}g'_{\sigma'(i)},$
$\forall i.$ We can reinterpret these two conditions. Since
$\tau\in C_{\Sigma_n}(\sigma, \sigma')$, $\tau$ maps a $k$-cycle
$i=(i_1,\cdots i_k)$ of $\sigma'$ to a $k$-cycle $j=(j_1, \cdots
j_k)$ of $\sigma$. $\tau$ will still used to denote its map on the
cycles, such as $\tau(r)=s$. For each
$l\in\mathbb{Z}/k\mathbb{Z}$, let $\tau(i_l)=j_{l+m_i}$ where
$m_i$ depends only on $\tau$ and the cycle $i.$ Then, the second
conditions can be expressed as
\begin{equation}\forall l\in\mathbb{Z}/k\mathbb{Z},
g_{j_l}h_{j_{l-1}}=h_{j_l}g'_{i_{l-m_i}}.\label{rela2}\end{equation}

From this equivalence, we can induce that the element
$$h_{j_k}g'^{-1}_{i_{1-m_i}}\cdots
g'^{-1}_{i_{k-1}}g'^{-1}_{i_k}=g^{-1}_{j_1}\cdots g^{-1}_{j_{m_i}}
h_{j_{m_i}}$$ maps $g_{j_k}\cdots g_{j_1}$ to $g'_{i_k}\cdots
g'_{i_1}$ by conjugation. In other words,
\begin{equation}\beta^{\underline{h}, \tau}_{j,
i}:=h_{j_k}g'^{-1}_{i_{1-m_i}}\cdots
g'^{-1}_{i_{k-1}}g'^{-1}_{i_k}\label{betadef}\end{equation} is an
element in $C_G (g_{j_k}\cdots g_{j_1}, g'_{i_k}\cdots g'_{i_1})$.
Thus, $C_G (g_{j_k}\cdots g_{j_1}, g'_{i_k}\cdots g'_{i_1})$ is
nonempty.

First we show each component $(X^{\times n})^{(\underline{g},
\sigma)}/\!\!/\Lambda_{G\wr\Sigma_n}(\underline{g}, \sigma)$ is
isomorphic to the groupoid $d_{(\underline{g}, \sigma)}(X)$. We
construct a functor $$f_{(\underline{g}, \sigma)}: (X^{\times
n})^{(\underline{g},
\sigma)}/\!\!/\Lambda_{G\wr\Sigma_n}(\underline{g}, \sigma)
\longrightarrow d_{(\underline{g}, \sigma)}(X).$$ It sends a point
$$x=(x_1, \cdots x_n)\in
(X^{\times n})^{(\underline{g}, \sigma)}$$ to
$$\prod_k\prod_{(i_1, \cdots
i_k)}x_{i_k}.$$ Note that $x_{i_k}=x_{i_1}g_{i_1}=\cdots
=x_{i_{k-1}}g_{i_{k-1}}\cdots g_{i_1}.$

Let $[(\underline{h}, \tau), t]\in
\Lambda_{G\wr\Sigma_n}(\underline{g}, \sigma)$. Let $\tau$ send
the $k-$cycle $i=(i_1, \cdots i_k)$ of $\sigma$ to a $k-$cycle
$j=(j_1, \cdots j_k)$ of $\sigma$ and $\tau(i_1)=j_{1+m_i}$. We
have
$$f_{(\underline{g}, \sigma)}( \gamma\cdot [(\underline{h}, \tau),
t_0])= \prod_k\prod_{(i_1, \cdots
i_k)}x_{j_{m_i}}h_{j_{m_i}}=\prod_k\prod_{(i_1, \cdots
i_k)}x_{j_k}\cdot\beta^{\underline{h}, \tau}_{j, i},$$ where
$\beta^{\underline{h}, \tau}_{j, i}$ is the symbol defined in
(\ref{betadef}). So $f_{(\underline{g}, \sigma)}$ maps the
morphism $[(\underline{h}, \tau), t]$ to
$$\times_k\times_{i\in\theta_k}([\beta^{\underline{h}, \tau}_{\tau(1), 1}, m_1+t], \cdots [\beta^{\underline{h}, \tau}_{\tau(M^{\sigma}_i), M^{\sigma}_i}, m_{M^{\sigma}_i}+t],
\tau|_{W^{\sigma}_i})$$ where $\tau|_{W^{\sigma}_i}$ denotes the
permutation induced by $\tau$ on the set
$W^{\sigma}_i=\{\alpha^i_1, \alpha^i_2, \cdots
\alpha^i_{M^{\sigma}_i}\}$, $\tau^{-1}(j)$ is short for $\tau^{-1}
(\alpha^i_j)$ and $\tau(j_l)=\tau(j)_{l+m_j}$.

It sends the identity map $[(1, \cdots, 1, \mbox{Id}), 0]$ to the
identity $$\times_k\times_{i\in\theta_k}([1, 0], \cdots [1, 0],
\mbox{Id}),$$ and preserves composition of morphisms. So it is
well-defined.

\begin{theorem} The two groupoids $(X^{\times n})^{(\underline{g},
\sigma)}/\!\!/\Lambda_{G\wr\Sigma_n}(\underline{g}, \sigma)$ and
$d_{(\underline{g}, \sigma)}(X)$ are isomorphic. Thus,
this isomorphism induces a $\Lambda_{G\wr\Sigma_n}(\underline{g},
\sigma)-$action on the space $$\prod_k\prod_{(i_1, \cdots
i_k)}X^{g_{i_k}\cdots g_{i_1}}.$$
 \label{iptm}
\end{theorem}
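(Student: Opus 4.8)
The plan is to show that the functor $f_{(\underline{g},\sigma)}$ constructed above is an isomorphism of groupoids, by verifying that it is a bijection on objects and fully faithful; since it has already been set up as a well-defined functor (preserving identities and composition), this suffices. For the object map, note that $\underline{x}=(x_1,\dots,x_n)$ lies in $(X^{\times n})^{(\underline{g},\sigma)}$ exactly when $x_j=x_{\sigma(j)}g_{\sigma(j)}$ for every $j$. Along a $k$-cycle $(i_1,\dots,i_k)$ of $\sigma$ this forces $x_{i_\ell}=x_{i_{\ell+1}}g_{i_{\ell+1}}$, so every coordinate in that cycle is determined by $x_{i_k}$, and running once around the cycle yields $x_{i_k}\in X^{g_{i_k}\cdots g_{i_1}}$. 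Conversely, a point chosen in each $X^{g_{i_k}\cdots g_{i_1}}$ propagates uniquely and consistently to a fixed point of $(\underline{g},\sigma)$, so $\underline{x}\mapsto\prod_k\prod_{(i_1,\dots,i_k)}x_{i_k}$ is a bijection onto the object space of $d_{(\underline{g},\sigma)}(X)$.

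For full faithfulness I would analyse $\Lambda_{G\wr\Sigma_n}(\underline{g},\sigma)=C_{G\wr\Sigma_n}(\underline{g},\sigma)\times\mathbb{R}/\langle((\underline{g},\sigma),-1)\rangle$ using the description of the centralizer recalled above: $(\underline{h},\tau)\in C_{G\wr\Sigma_n}(\underline{g},\sigma)$ iff $\tau$ commutes with $\sigma$ and condition (\ref{rela2}) holds. Since $\tau$ then sends a $k$-cycle $i$ to a $k$-cycle $\tau(i)$ for which $C_G(g_{\tau(i)_k}\cdots g_{\tau(i)_1},\,g_{i_k}\cdots g_{i_1})$ is nonempty by (\ref{betadef}), $\tau$ must permute each block $W^{\sigma}_i$ within itself, so $\tau$ is recorded by the family $\tau_i\in\Sigma_{M^{\sigma}_i}$. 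Given $\tau$, relation (\ref{rela2}) shows that the tuple $\underline{h}$ is encoded, on the block $W^{\sigma}_i$, by the elements $\beta^{\underline{h},\tau}_{\tau_i(p),p}\in C_G(\Gamma_{i,\tau_i(p)},\Gamma_{i,p})$ together with the within-cycle offsets $m_p$, which merge with the global rotation $t$ into the classes $[\beta^{\underline{h},\tau}_{\tau_i(p),p},\,m_p+t]\in\Lambda^k_G(\Gamma_{i,\tau_i(p)},\Gamma_{i,p})$; and the single relation $((\underline{g},\sigma),-1)$ defining the quotient acts, on each block, precisely as the defining relations of these $\Lambda^k_G$'s. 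This is exactly the data specifying a morphism out of $f_{(\underline{g},\sigma)}(\underline{x})$ in $d_{(\underline{g},\sigma)}(X)$, the requirement that all the $t^i_p$ have the same image in $\mathbb{R}/\mathbb{Z}$ reflecting that one $t\in\mathbb{R}$ induces the same rotation on every factor. Reading this correspondence in both directions produces a bijection on hom-sets agreeing with $f_{(\underline{g},\sigma)}$, so $f_{(\underline{g},\sigma)}$ is fully faithful, hence an isomorphism of groupoids; transporting the translation-groupoid structure of Proposition \ref{trans} across it gives the asserted $\Lambda_{G\wr\Sigma_n}(\underline{g},\sigma)$-action on $\prod_k\prod_{(i_1,\dots,i_k)}X^{g_{i_k}\cdots g_{i_1}}$.

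The main obstacle is the bookkeeping in the second step: tracking which $k$-cycle of $\sigma$ is the chosen representative in $\theta_k$ and which is only equivalent to it, handling the fact that $\tau$ permutes all cycles simultaneously while only its restrictions $\tau_i$ to the $\theta_k$-blocks are retained, and, most delicately, showing that the single $\mathbb{R}$-coordinate of $\Lambda_{G\wr\Sigma_n}(\underline{g},\sigma)$ together with the integer offsets $m_i$ assembles exactly into the fibered wreath product $\prod_k{}_{\mathbb{T}}\prod_{i\in\theta_k}{}_{\mathbb{T}}\Lambda^k_G(\Gamma_{i,1})\wr_{\mathbb{T}}\Sigma_{M^{\sigma}_i}$ of Proposition \ref{trans}, carrying the quotient by $\langle((\underline{g},\sigma),-1)\rangle$ onto the various $\Lambda^k_G$-quotients. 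Once this group identification is in place and is checked to intertwine the two actions on the fixed-point space, the theorem follows.
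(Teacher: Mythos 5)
Your proposal is correct and essentially mirrors the paper's argument: the paper constructs an explicit inverse functor $J_{(\underline{g},\sigma)}$ on objects and morphisms and checks that $J_{(\underline{g},\sigma)}\circ f_{(\underline{g},\sigma)}$ and $f_{(\underline{g},\sigma)}\circ J_{(\underline{g},\sigma)}$ are both identities, while you phrase the same content as showing $f_{(\underline{g},\sigma)}$ is bijective on objects and fully faithful, which amounts to the identical bookkeeping (the cycle-wise propagation of the fixed-point condition, the centralizer description of $\Lambda_{G\wr\Sigma_n}(\underline{g},\sigma)$ via $\tau$ preserving the blocks $W^\sigma_i$, and the $\beta^{\underline{h},\tau}_{j,i}$ and offsets $m_i$ assembling into the $\Lambda^k_G$-data). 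The route is the same in substance; the paper is only more explicit in writing out the inverse formula rather than appealing to full faithfulness.
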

\begin{proof}

We construct the inverse functor $$J_{(\underline{g}, \sigma)}:
d_{(\underline{g}, \sigma)}(X)\longrightarrow (X^{\times
n})^{(\underline{g},
\sigma)}/\!\!/\Lambda_{G\wr\Sigma_n}(\underline{g}, \sigma)$$ of
$f_{(\underline{g}, \sigma)}$. For an object $
\times_k\times_{i\in\theta_k}\nu_{i, k}$ in $d_{(\underline{g},
\sigma)}(X)$, $J_{(\underline{g},
\sigma)}(\times_k\times_{i\in\theta_k}\nu_{i, k})=\{\nu_m\}_1^n$
 with
$\nu_{i_k}=\nu_{i, k}|_{[0, 1]}$ and $\nu_{i_s}(t):=\nu_{i,
k}(s+t)g_{i_1}^{-1}\cdots g_{i_s}^{-1}.$

Let
$$\prod_k\prod_{i\in\theta_k}((u^i_1,m'^i_1),(u^i_2,
m'^i_2),\cdots(u^i_{M^{\sigma}_{i}}, m'^i_{M^{\sigma}_{i}}),
\varrho^k_i)$$ be a morphism in $d_{(\underline{g}, \sigma)}(X)$.
Let $t$ be a representative of the image of $m'^i_1$ in
$\mathbb{R}/\mathbb{Z}$. Then, each $m^i_k:=m'^i_k-t$ is an
integer.

When we know how $\tau\in C_{\Sigma_n}(\sigma)$ permutes the
cycles of $\sigma$, whose information is contained in those
$\varrho^k_i\in\Sigma_{M^{\sigma}_{i}}$, and the numbers $m^i_1,
\cdots m^i_{M^{\sigma}_{i}},$  we can get a unique $\tau$.
Explicitly, for any number $j_r=1, 2 \cdots n$, if $j_r$ is in a
$k-$cycle $(j_1, \cdots j_k)$ of $\sigma$ and it is in the set
$W^{\sigma}_i$, then $\tau$ maps $j_r$ to
$\varrho^k_i(j)_{r+m^i_j}$, i.e. the $r+m^i_j$-th element in the
cycle $\varrho^k_i(j)$ of $\sigma$.

For any $a\in W^{\sigma}_i$, $\forall k$ and $i$, we want
$u^i_a=\beta^{\underline{h}, \tau}_{\tau(a), a}$ for some
$\underline{h}$. Thus,
\begin{equation}h_{\tau(a)_k}=u^i_a g_{a_k}\cdots
g_{a_{1-m^i_a}}.\end{equation} By (\ref{rela2}) we can get all the
other $h_{\tau(a)_j}$.

It can be checked straightforward that $J_{(\underline{g},
\sigma)}$ is a well-defined functor. It does not depend on the
choice of the representative $t$.

$J_{(\underline{g}, \sigma)}\circ f_{(\underline{g}, \sigma)}=
\mbox{Id}$; $f_{(\underline{g}, \sigma)}\circ J_{(\underline{g},
\sigma)}= \mbox{Id}$. So the conclusion is proved.
\end{proof}

Then by Proposition \ref{trans}, we get the main conclusion in
Section \ref{111}.
\begin{theorem}The two groupoids $\Lambda((X/\!\!/G)\wr\Sigma_n)$ and $d((X/\!\!/G)\wr\Sigma_n)$ are isomorphic.\label{iprm}\end{theorem}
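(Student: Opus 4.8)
The plan is to deduce Theorem \ref{iprm} from the component-wise isomorphism of Theorem \ref{iptm} by assembling the functors $f_{(\underline{g},\sigma)}$ into a single functor on the relevant disjoint unions and then checking compatibility with the morphisms that connect distinct components. First I would recall that, as noted above Definition \ref{lambdak1}, $(X/\!\!/G)\wr\Sigma_n\cong X^{\times n}/\!\!/(G\wr\Sigma_n)$ and that for finite $G$ one has $L^1_gG\rtimes\mathbb{T}\cong\Lambda_G(g)$, so $\Lambda((X/\!\!/G)\wr\Sigma_n)\cong\Lambda(X^{\times n}/\!\!/(G\wr\Sigma_n))$. By Definition \ref{lambdaoidef}, the latter has object space $\coprod_{(\underline{g},\sigma)}(X^{\times n})^{(\underline{g},\sigma)}$ and morphism space $\coprod_{(\underline{g},\sigma),(\underline{g}',\sigma')}\Lambda_{G\wr\Sigma_n}((\underline{g},\sigma),(\underline{g}',\sigma'))\times(X^{\times n})^{(\underline{g},\sigma)}$, matching term by term the object space of $d((X/\!\!/G)\wr\Sigma_n)$ in Definition \ref{dxgsigman}.

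On each component (fixed $(\underline{g},\sigma)$) I would invoke the invertible functor $f_{(\underline{g},\sigma)}$ of Theorem \ref{iptm} together with its inverse $J_{(\underline{g},\sigma)}$. The key observation is that the explicit formula already written for $f_{(\underline{g},\sigma)}$ on a morphism $[(\underline{h},\tau),t]$, namely $\times_k\times_{i\in\theta_k}([\beta^{\underline{h},\tau}_{\tau(1),1},m_1+t],\cdots,\tau|_{W^\sigma_i})$ with $\beta^{\underline{h},\tau}_{j,i}$ as in (\ref{betadef}), depends only on $(\underline{h},\tau)$ and the source datum $(\underline{g},\sigma)$, and makes sense verbatim when the target $(\underline{g}',\sigma')$ differs from the source. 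Indeed, from the analysis preceding (\ref{rela2}) and (\ref{betadef}), an element $(\underline{h},\tau)\in C_{G\wr\Sigma_n}((\underline{g},\sigma),(\underline{g}',\sigma'))$ satisfies $\tau\sigma'=\sigma\tau$ and gives $\beta^{\underline{h},\tau}_{j,i}\in C_G(g_{j_k}\cdots g_{j_1},\,g'_{i_k}\cdots g'_{i_1})$; absorbing $t$ into the real parameters, this is exactly the data of a morphism in $d((X/\!\!/G)\wr\Sigma_n)$ between the components indexed by $(\underline{g},\sigma)$ and $(\underline{g}',\sigma')$. Hence $f:=\coprod_{(\underline{g},\sigma)}f_{(\underline{g},\sigma)}$, and likewise $J:=\coprod_{(\underline{g},\sigma)}J_{(\underline{g},\sigma)}$ extended the same way, are well-defined functors between the two groupoids, restricting on each component to the isomorphism of Theorem \ref{iptm}.

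It then remains to verify the routine points. The functor $f$ sends identities to identities and preserves composition, because the composition formula (\ref{compositionagsigma}) matches the multiplication of $\Lambda_{G\wr\Sigma_n}$ transported through $\beta$, exactly as in the component case; the same holds for $J$. Since $J_{(\underline{g},\sigma)}\circ f_{(\underline{g},\sigma)}=\mathrm{Id}$ and $f_{(\underline{g},\sigma)}\circ J_{(\underline{g},\sigma)}=\mathrm{Id}$ component-wise, the same identities hold globally, so $f$ is an isomorphism of groupoids. One also checks, as in the proof of Theorem \ref{iptm}, that the construction is independent of the auxiliary choices, namely the sets of representatives $\theta_k$ and the representative $t\in\mathbb{R}$ of the common image in $\mathbb{R}/\mathbb{Z}$.

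I expect the main obstacle to be purely bookkeeping rather than conceptual: keeping straight how a permutation $\tau$ of $\underline{\underline{n}}$ lying in $C_{\Sigma_n}(\sigma,\sigma')$ simultaneously induces the permutations $\tau|_{W^\sigma_i}$ of cycles and the shift integers $m_i$, and how the single real (rotation) parameter $t$ of a morphism in $\Lambda_{G\wr\Sigma_n}$ distributes, via the $m_i$, among the parameters $t^i_j$ living in the various $\Lambda^k_G$, so that the ``all $t^i_j$ have the same image in $\mathbb{R}/\mathbb{Z}$'' condition in Definition \ref{dxgsigman} corresponds precisely to a single well-defined morphism on the $\Lambda$-side. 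Once this dictionary is set up carefully on one component, as it already is in Theorem \ref{iptm}, extending it across components adds only the observation that $\beta^{\underline{h},\tau}_{j,i}$ lands in the correct twisted conjugation set $C_G(g_{j_k}\cdots g_{j_1},\,g'_{i_k}\cdots g'_{i_1})$.
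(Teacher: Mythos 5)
Your proposal is correct and follows essentially the same route as the paper: you assemble the component-wise isomorphisms $f_{(\underline{g},\sigma)}$ of Theorem \ref{iptm} and observe that, on morphisms that connect distinct (conjugate) components, the formula built from $\beta^{\underline{h},\tau}_{j,i}$ lands in the correct twisted set $C_G(g_{j_k}\cdots g_{j_1},\,g'_{i_k}\cdots g'_{i_1})$, giving a global isomorphism. The paper packages the same check as a commutative square relating the action of $[(\underline{h},\tau),t_0]$ on the $\Lambda$-side to a canonical functor $f([(\underline{h},\tau),t_0]):d_{(\underline{g},\sigma)}(X)\to d_{(\underline{g}',\sigma')}(X)$, but the underlying computation is the one you identify.
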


The last conclusion in this section is some properties of the
functor $f_{(\underline{g}, \sigma)}$.

\begin{proposition}
(i) If $\sigma=(1)\in\Sigma_1$, the morphism $f_{(g, (1))}$ is the
identity map on $X^g/\!\!/\Lambda_{G}(g)$.

(ii) Let $(\underline{g}, \sigma)\in G\wr\Sigma_n$ and
$(\underline{h}, \tau)\in G\wr\Sigma_m$. The groupoids
$$(X^{\times n})^{(\underline{g},
\sigma)}/\!\!/\Lambda_{G\wr\Sigma_n}(\underline{g},
\sigma)\times_{\mathbb{T}}(X^{\times m})^{(\underline{h},
\tau)}/\!\!/\Lambda_{G\wr\Sigma_m}(\underline{h}, \tau)$$ and
$$(X^{\times (n+m)})^{(\underline{g}, \underline{h}
\sigma\tau)}/\!\!/\Lambda_{G\wr\Sigma_{n+m}}(\underline{g},
\underline{h}, \sigma\tau)
$$ are isomorphic.

(iii)$f_{(\underline{g}, \sigma)}$ preserves cartesian product of
loops. The following diagram of groupoids commutes.
$$\xymatrix{(X^{\times n})^{(\underline{g},
\sigma)}/\!\!/\Lambda_{\!G\wr\Sigma_n\!}(\underline{g},
\sigma)\!\!\times_{\mathbb{T}}\!\!(X^{\times m})^{(\underline{h},
\tau)}/\!\!/\Lambda_{\!G\wr\Sigma_m\!}(\underline{h},
\tau)\ar[r]^>>>>>>{\cong}
\ar[d]_{f_{(\underline{g}, \sigma)}\times f_{(\underline{h},
\tau)}} &(X^{\times (n+m)})^{(\underline{g}, \underline{h},
\sigma\tau)}/\!\!/\Lambda_{\!G\wr\Sigma_{n+m}\!}\!(\underline{g},\!
\underline{h}, \!\sigma\tau)\!\ar[d]^{
f_{(\underline{g}, \underline{h}, \sigma\tau)}}\\
d_{(\underline{g}, \sigma)}(X)\times_{\mathbb{T}}
d_{(\underline{h}, \tau)}(X)\ar[r]^>>>>>>>>>>>>>>>>>>>>>>{\cong}
&d_{(\underline{g}, \underline{h}, \sigma\tau)}(X)}$$ \label{sim}
\end{proposition}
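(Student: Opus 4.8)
The plan is to deduce all three parts from the explicit description of the functor $f_{(\underline g,\sigma)}$ recalled just above the statement, together with Theorem \ref{iptm} and the translation-groupoid presentation of Proposition \ref{trans}. Part (i) is a direct specialization. With $n=1$ and $\sigma=(1)$ one has $G\wr\Sigma_1=G$, $(X^{\times 1})^{(g,(1))}=X^g$, $\Lambda_{G\wr\Sigma_1}(g,(1))=\Lambda_G(g)$, a single $1$-cycle (so $\theta_1=\{(1)\}$, $M^{\sigma}_{(1)}=1$), and $d_{(g,(1))}(X)=X^g/\!\!/\Lambda^1_G(g,g)=X^g/\!\!/\Lambda_G(g)$. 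Substituting these data into the formula for $f_{(\underline g,\sigma)}$ gives the identity on objects, $x\mapsto x_{i_k}=x$; and on a morphism $[(h,(1)),t_0]$ the shift is $m_1=0$ and, reading (\ref{betadef}) with $k=1$, $\beta^{h,(1)}_{(1),(1)}=h$, so the morphism is sent to $([h,t_0],\mbox{Id})$, i.e.\ to $[h,t_0]\in\Lambda_G(g)$. Hence $f_{(g,(1))}=\mbox{Id}$.

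For (ii) I would first note that the block-diagonal element $(\underline g,\underline h;\sigma\tau)$ acts on the first $n$ coordinates exactly as $(\underline g,\sigma)$ and on the last $m$ exactly as $(\underline h,\tau)$, so the fixed-point space factors: $(X^{\times(n+m)})^{(\underline g,\underline h;\sigma\tau)}=(X^{\times n})^{(\underline g,\sigma)}\times(X^{\times m})^{(\underline h,\tau)}$. The block inclusion $(G\wr\Sigma_n)\times(G\wr\Sigma_m)\hookrightarrow G\wr\Sigma_{n+m}$ identifies $\Lambda_{G\wr\Sigma_n}(\underline g,\sigma)\times_{\mathbb T}\Lambda_{G\wr\Sigma_m}(\underline h,\tau)$ — which is $\Lambda_{(G\wr\Sigma_n)\times(G\wr\Sigma_m)}\big((\underline g,\sigma),(\underline h,\tau)\big)$ via the isomorphism $\Lambda_{A\times B}(a,b)\cong\Lambda_A(a)\times_{\mathbb T}\Lambda_B(b)$ recalled before (\ref{ku}) — with the block-preserving subgroup of $\Lambda_{G\wr\Sigma_{n+m}}(\underline g,\underline h;\sigma\tau)$, and this subgroup acts on the product factorwise with a single shared rotation coordinate. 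By definition its action groupoid is the fibered product over $\mathbb T$ (via the homomorphisms $\pi$) of the two action groupoids, which is the assertion. Equivalently, after applying Theorem \ref{iptm} and Proposition \ref{trans} one observes that the $k$-cycles of $\sigma\tau$ are exactly those of $\sigma$ together with those of $\tau$, carrying the same associated products $\Gamma_{\bullet}$, so that $d_{(\underline g,\sigma)}(X)\times_{\mathbb T}d_{(\underline h,\tau)}(X)$ is the block-preserving part of $d_{(\underline g,\underline h;\sigma\tau)}(X)$.

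For (iii), (ii) supplies the top arrow, and I would take the bottom arrow $d_{(\underline g,\sigma)}(X)\times_{\mathbb T}d_{(\underline h,\tau)}(X)\buildrel{\cong}\over\longrightarrow d_{(\underline g,\underline h;\sigma\tau)}(X)$ to be the tautological concatenation-of-cycle-data identification (onto the block-preserving part), matching the factor indexed by a cycle of $\sigma$, resp.\ of $\tau$, with the corresponding factor on the right and taking a single shared rotation. Commutativity of the square is then checked cycle by cycle straight from the formula for $f$: on objects $f_{(\underline g,\underline h;\sigma\tau)}(x,y)=\prod_k\prod_{(i_1,\dots,i_k)}(\cdots)_{i_k}$ splits as $\big(f_{(\underline g,\sigma)}(x),f_{(\underline h,\tau)}(y)\big)$ because the cycles split; on morphisms, the element $\beta^{\underline h,\tau}_{j,i}$ of (\ref{betadef}), the shift $m_i$, and the action (\ref{chongaction}) attached to a given cycle involve only the $g$'s and $h$'s indexed inside that cycle, hence do not change when the cycle is regarded inside $\sigma$, inside $\tau$, or inside $\sigma\tau$; and the rotation coordinates agree by construction of the two fibered products.

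The hard part is not any single step but the bookkeeping underlying (iii): one must verify that the indexing sets $\theta_k$, the orbit sets $W^{\sigma}_i$, and the sizes $M^{\sigma}_i$ for $\sigma\tau$ decompose as the disjoint union of the corresponding data for $\sigma$ and for $\tau$ once one restricts to block-preserving symmetries, and that the ``equal image in $\mathbb R/\mathbb Z$'' constraints defining $\times_{\mathbb T}$ match on the two sides. None of this is conceptually deep, but the index shifts $m_i$ and the ordering convention $i_1<\dots<i_k$ within each cycle make it easy to slip on signs; I would therefore fix notation once and then let the formulas fall out.
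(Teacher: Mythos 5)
Your approach tracks the paper's closely. For (i), the paper simply points to the proof of Theorem~\ref{iptm}; your direct substitution ($n=1$, one $1$-cycle, $\beta^{h,(1)}_{(1),(1)}=h$, shift $m_1=0$) is precisely the specialization being referenced, so this part is fine. For (ii), the paper defines the functor $\Phi$ by $(x_1,x_2)\mapsto(x_1,x_2)$ and $([\alpha,t],[\beta,t])\mapsto[\alpha,\beta,t]$ and declares it an isomorphism; your argument (the fixed-point space factors over the blocks, and the block inclusion matches the fibered product of $\Lambda$'s with a subgroup of the big $\Lambda$) is the same functor, unpacked a bit further. For (iii), the paper leaves the proof to the reader, and your cycle-by-cycle check --- that the cycles of $\sigma\tau$ are exactly those of $\sigma$ together with those of $\tau$, that the associated products $g_{i_k}\cdots g_{i_1}$ are unchanged, and that the $\beta$'s and shifts $m_i$ depend only on data internal to each cycle --- is the right bookkeeping to supply. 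You also correctly identify the real work: verifying how $\theta_k$, $W^\sigma_i$, $M^\sigma_i$ behave under concatenation.

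One thing worth pinning down rather than stating parenthetically: the functor you (and the paper) construct in (ii) is faithful but generally \emph{not} surjective on morphisms, so the ``$\cong$'' in the statement is too strong. The group $\Lambda_{G\wr\Sigma_{n+m}}(\underline g,\underline h;\sigma\tau)$ may contain elements that permute cycles across the two blocks (for instance, if some cycle of $\sigma$ and some cycle of $\tau$ have the same length and conjugate cycle-products $\Gamma$, or, concretely, take $G$ trivial, $n=m=1$: the left side is $\mathrm{pt}/\!\!/\mathbb T$ and the right side is $\mathrm{pt}/\!\!/(\Sigma_2\times\mathbb T)$). Your phrasing ``block-preserving subgroup'' and ``onto the block-preserving part'' shows you noticed this, but the sentence ``which is the assertion'' lands on the literal (over-strong) statement. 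The correct reading, consistent with the paper's own later sentence that $d_{(\underline g,\sigma)}(X)\times_{\mathbb T}d_{(\underline h,\tau)}(X)$ \emph{is a subgroupoid of} $d_{(\underline g,\underline h,\sigma\tau)}(X)$ with the same objects, and with the use of $\mathrm{res}|^{\Lambda_{G\wr\Sigma_{m+n}}(\underline g,\underline h;\sigma\tau)}_{\Lambda_{G\wr\Sigma_n}(\underline g,\sigma)\times_{\mathbb T}\Lambda_{G\wr\Sigma_m}(\underline h,\tau)}$ in Theorem~\ref{main1p}(ii), is that $\Phi$ is an isomorphism onto that subgroupoid, and the square in (iii) commutes with the horizontal arrows read as these inclusions. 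Making that precise would improve both your writeup and the statement.
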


\begin{proof}

(i) is indicated in the proof of Theorem \ref{iptm}.

(ii) We can define a functor $\Phi$ from $$(X^{\times
n})^{(\underline{g},
\sigma)}/\!\!/\Lambda_{G\wr\Sigma_n}(\underline{g},
\sigma)\times_{\mathbb{T}}(X^{\times m})^{(\underline{h},
\tau)}/\!\!/\Lambda_{G\wr\Sigma_m}(\underline{h}, \tau)$$ to
$(X^{\times (n+m)})^{(\underline{g}, \underline{h}
\sigma\tau)}/\!\!/\Lambda_{G\wr\Sigma_{n+m}}(\underline{g},
\underline{h}, \sigma\tau)$ sending an object $(x_1, x_2)$ to
$(x_1, x_2)$ and a morphism $([\alpha,t],[\beta, t])$ to $[\alpha,
\beta, t].$  It is straightforward to check $\Phi$ is an
isomorphism between the groupoids.

(iii) The proof is left to the readers.

\end{proof}

\subsection{Total Power Operation of
$QEll^*_G$}\label{s2complete}

In this section   we construct the total power operations
 for quasi-elliptic cohomology and give
its explicit formula in (\ref{ee}). We show in Theorem
\ref{main1p} that they satisfy the axioms that Ganter concluded in
Definition 4.3, \cite{Gan06} for equivariant power operation.

We explain each map in the formula (\ref{pgs1}) and (\ref{pgs2}).
The functor  $U: \Lambda^1_{(\underline{g},
\sigma)}(X)\longrightarrow \Lambda(X/\!\!/G)$ is defined in
(\ref{functorU}). The pullback $(\mbox{ })^{\Lambda}_k$ is defined
in (\ref{rescalelambda}). The external product $\boxtimes$ is
explained in (\ref{lambdaproductd}). The fourth is the pullback by
$f_{(\underline{g}, \sigma)}$.

\textbf{The Functor $U$}

 For each $(\underline{g},
\sigma)\in G\wr\Sigma_n$, $r\in\mathbb{Z}$,
let $\Lambda^r_{(\underline{g}, \sigma)}(X)$ denote the groupoid 
with objects
$$\coprod_k\coprod_{(i_1, \cdots i_k)}X^{g_{i_k}\cdots g_{i_1}}$$
where $(i_1, \cdots i_k)$ goes over all the $k-$cycles of
$\sigma$, and with morphisms
$$\coprod_k\coprod_{(i_1, \cdots i_k), (j_1, \cdots
j_k)}\Lambda^r_G(g_{i_k}\cdots g_{i_1}, g_{j_k}\cdots
g_{j_1})\times X^{g_{i_k}\cdots g_{i_1}},$$ where $(i_1, \cdots
i_k)$ and $(j_1, \cdots j_k)$ go over all the $k-$cycles of
$\sigma$ respectively. It may not be a subgroupoid of
$\Lambda^r(X/\!\!/G)$ because there may be cycles $(i_1, \cdots
i_k)$ and $(j_1, \cdots j_m)$
such that $$g_{i_k}\cdots g_{i_1}=g_{j_m}\cdots g_{j_1}.$$ 

Let \begin{equation}U:\Lambda^1_{(\underline{g},
\sigma)}(X)\longrightarrow \Lambda(X/\!\!/
G)\label{functorU}\end{equation} denote the functor sending $x$ in
the component $X^{g_{i_k}\cdots g_{i_1}}$ to the $x$ in the
component $X^{g_{i_k}\cdots g_{i_1}}$ of $\Lambda(X/\!\!/ G)$, and
send each morphism $$([h, t], x)\mbox{   in
}\Lambda_G(g_{i_k}\cdots g_{i_1}, g_{j_k}\cdots g_{j_1})\times
X^{g_{i_k}\cdots g_{i_1}}$$ to $$([h, t], x)\mbox{   in
  }\Lambda_G(g_{i_k}\cdots g_{i_1}, g_{j_k}\cdots g_{j_1})\times
X^{g_{i_k}\cdots g_{i_1}}.$$ In the case that  $g_{i_k}\cdots
g_{i_1}$ and $g_{j_k}\cdots g_{j_1}$ are equal, $([h, t], x)$ is
an arrow inside a single connected component.

\bigskip

\textbf{The Functor $(\mbox{                  })_k$}

For each integer $k$, there is a functor of groupoids $(\mbox{
})_k:  \Lambda^k(X/\!\!/G)\longrightarrow \Lambda(X/\!\!/G)$ 
sending an object $x$ to $x$ and  a morphism $([h, t_0], x)\mbox{
    to     }([h, \frac{t_0}{k}], x).$ The composition
$((\mbox{ })_k)_r=(\mbox{ })_{kr}.$

The functor $(\mbox{                  })_k$ gives a well-defined
map $$K_{orb}(\Lambda(X/\!\!/G))\longrightarrow K_{orb}(\Lambda^k
(X/\!\!/G))$$ by pullback of bundles.  We still use the symbol
$(\mbox{                  })_k$ to denote it when there is no
confusion.  For any $\Lambda(X/\!\!/G)-$vector bundle
$\mathcal{V}$,
 $S^1$ acts on $(\mathcal{V})_k$ via
\begin{align*}q^{\frac{1}{k}}: \mathbb{R}/k\mathbb{Z} &\longrightarrow U(1)\\ a&\mapsto e^{\frac{2\pi ia}{k}}.\end{align*}
If $\mathcal{V}$ has the decomposition
$\mathcal{V}=\bigoplus\limits_{j\in\mathbb{Z}}V_jq^j,$ then
\begin{equation}(\mathcal{V})_k=\bigoplus_{j\in\mathbb{Z}}V_jq^{\frac{j}{k}}.\label{lpok}\end{equation}

\textbf{The Functor $(\mbox{ })_k^{\Lambda}$}

Let $\Lambda^{var}_{(\underline{g}, \sigma)}(X)$ be the groupoid
with the same objects as $\Lambda^1_{(\underline{g}, \sigma)}(X)$
and morphisms
$$\coprod\limits_k\coprod_{(i_1, \cdots i_k), (j_1, \cdots
j_k)}\Lambda^k_G(g_{i_k}\cdots g_{i_1}, g_{j_k}\cdots
g_{j_1})\times X^{g_{i_k}\cdots g_{i_1}},$$ where $(i_1, \cdots
i_k)$ and $(j_1, \cdots j_k)$ go over all the $k-$cycles of
$\sigma$ respectively.

We can define a similar functor
\begin{equation}(\mbox{  })_k^{\Lambda}: \Lambda^{var}_{(\underline{g},
\sigma)}(X)\longrightarrow \Lambda^1_{(\underline{g},
\sigma)}(X)\label{rescalelambda}\end{equation} that is identity on
objects and sends each $[g, t]\in \Lambda^k_G(g_{i_k}\cdots
g_{i_1}, g_{j_k}\cdots g_{j_1})$ to $[g, \frac{t}{k}]\in
\Lambda^1_G(g_{i_k}\cdots g_{i_1}, g_{j_k}\cdots g_{j_1})$. We use
the same symbol $(\mbox{  })_k^{\Lambda}$ to denote the pull back
\begin{equation}K_{orb}(\Lambda^1_{(\underline{g},
\sigma)}(X))\longrightarrow K_{orb}(\Lambda^{var}_{(\underline{g},
\sigma)}(X)).\label{rescalekpower}\end{equation}


\bigskip

\textbf{The external product $\boxtimes$ }

Let $Y$ an $H$-space,  $(\underline{g}, \sigma)\in G\wr\Sigma_n$
and $(\underline{h}, \tau)\in G\wr\Sigma_m$.

Each $K^*_{orb}(d_{(\underline{g}, \sigma)}(X))$ is a
$\mathbb{Z}[q^{\pm}]-$algebra, as shown in Section \ref{setupad}.
The external product in the theory $K^*_{orb}(d_{(\underline{g},
\sigma)}(-))$ is defined to be the tensor product of
$\mathbb{Z}[q^{\pm}]-$algebras. The fibred product
$d_{(\underline{g},
\sigma)}(X)\times_{\mathbb{T}}d_{(\underline{h}, \tau)}(X)$ has
the same objects as $d_{(\underline{g}, \underline{h},
\sigma\tau)}(X)$ and is a subgroupoid of it.

So we have the K\"{u}nneth map
\begin{equation}K^*_{orb}(d_{(\underline{g},
\sigma)}(X))\otimes_{\mathbb{Z}[q^{\pm}]}K^*_{orb}(d_{(\underline{h},
\tau)}(X))\longrightarrow K^*_{orb}(d_{(\underline{g},
\sigma)}(X)\times_{\mathbb{T}}d_{(\underline{h}, \tau)}(X))\label{kunnethdc1}\end{equation} 
It is compatible with the K\"{u}nneth map (\ref{ku})  of the
quasi-elliptic cohomology in the sense that the diagram below
commutes.
\begin{equation}\begin{CD}[cols=0.1em]K^*_{orb}(d_{(\underline{g},
\sigma)}(X))\!\underset{\mathbb{Z}[q^{\pm}]}\otimes\!
K^*_{orb}\!(d_{(\underline{h}, \sigma)}(X)) @>>>
K^*_{orb}(d_{(\underline{g},
\sigma)}(X)\!\times_{\mathbb{T}}\!d_{(\!\underline{h},
\sigma\!)}(X))\\
@V{f_{(\underline{g},
\sigma)}^*\otimes_{\mathbb{Z}[q^{\pm}]}f_{(\underline{h},
\sigma)}^*}VV    @V{f_{(\underline{(g, h)}, \sigma)}^*}VV
\\ K^{*}_{\Lambda_{G\wr\Sigma_n}\!(\underline{g},
\sigma)}\!((X^n)^{(\underline{g}, \sigma)})
\!\!\!\underset{\mathbb{Z}[q^{\pm}]}\otimes\!\!\!
K^*_{\Lambda_{H\wr\Sigma_n}\!(\underline{h},
\sigma)}\!((Y^m)^{(\underline{h}, \sigma)}) @>>>
K^{*}_{\!\Lambda_{(\!G\!\times
\!H\!)\wr\Sigma_n}\!\!(\underline{(g, h)},
\sigma)}\!((X\!\times\!Y)^n)^{(\underline{(g, h)},
\sigma)}\end{CD}\label{kudqellc}\end{equation} where the
horizontal maps are K\"{u}nneth maps.

If we have a vector bundle
$E=\coprod\limits_k\coprod\limits_{(i_1, \cdots
i_k)}E_{g_{i_k}\cdots g_{i_1}}$ over $\Lambda^1_{(\underline{g},
\sigma)}(X)$,  the external product
$$\boxtimes_k\boxtimes_{(i_1, \cdots
i_k)}E_{g_{i_k}\cdots g_{i_1}}$$ is a 
vector bundler over $d_{(\underline{g}, \sigma)}(X)$. This defines
a map \begin{equation} K_{orb}(\Lambda^1_{(\underline{g},
\sigma)}(X))\longrightarrow K_{orb}(d_{(\underline{g},
\sigma)}(X))\label{lambdaproductd}\end{equation}

Composing all the functors as in (\ref{pgs1}) and (\ref{pgs2}), we get the explicit formula of $\mathbb{P}_{(\underline{g}, \sigma)}$ 
\begin{equation}\mathbb{P}_{(\underline{g},
\sigma)}(\mathcal{V})=f^*_{(\underline{g},
\sigma)}(\boxtimes_{k}\boxtimes_{(i_1,\cdots
i_k)}(\mathcal{V}_{g_{i_k}\cdots
g_{i_1}})_k).\label{ee}\end{equation}
$\mathbb{P}_{(\underline{g}, \sigma)}$ is natural. If
$(\underline{g}, \sigma)$ and $(\underline{h}, \tau)$ are
conjugate in $G\wr\Sigma_n$,  $\mathbb{P}_{(\underline{g},
\sigma)}(\mathcal{V})$ and $\mathbb{P}_{(\underline{h},
\tau)}(\mathcal{V})$ are isomorphic.

\begin{theorem}The family of maps $$\mathbb{P}_n=\prod_{(\underline{g},
\sigma)\in
(G\wr\Sigma_n)_{conj}}\mathbb{P}_{(\underline{g},\sigma)}:
QEll^*_G(X)\longrightarrow QEll^*_{G\wr\Sigma_n}(X^{\times n}),$$
satisfy

(i) $\mathbb{P}_1=$Id, $\mathbb{P}_0(x)=1$.

(ii) Let $x\in QEll^*_G(X)$, $(\underline{g}, \sigma)\in
G\wr\Sigma_n$ and $(\underline{h}, \tau)\in G\wr\Sigma_m$. The
external product of two power operations
$$\mathbb{P}_{(\underline{g}, \sigma)}(x)\boxtimes \mathbb{P}_{(\underline{h}, \tau)}(x)=res|^{\Lambda_{G\wr\Sigma_{m+n}}(\underline{g},\underline{h}; \sigma\tau)}
_{\Lambda_{G\wr\Sigma_n}(\underline{g},
\sigma)\times_{\mathbb{T}}\Lambda_{G\wr\Sigma_m}(\underline{h},
\tau)} \mathbb{P}_{(\underline{g},\underline{h};
\sigma\tau)}(x).$$

(iii) The composition of two power operations is
$$\mathbb{P}_{((\underline{\underline{h}, \tau}); \sigma)}(\mathbb{P}_m(x))=res|^{\Lambda_{G\wr\Sigma_{mn}}(\underline{\underline{h}}, (\underline{\tau}, \sigma))}
_{\Lambda_{(G\wr\Sigma_m)\wr\Sigma_n}((\underline{\underline{h},
\tau}); \sigma)}\mathbb{P}_{(\underline{\underline{h}},
(\underline{\tau}, \sigma))}(x)$$ where
$(\underline{\underline{h}, \tau})\in (G\wr \Sigma_m)^{\times n}$,
and $\sigma\in\Sigma_n$. $(\underline{\tau}, \sigma)$ is in
$\Sigma_m\wr\Sigma_n$, thus, can be viewed as an element in
$\Sigma_{mn}$.

(iv) $\mathbb{P}$ preserves external product. For $\underline{(g,
h)}=((g_1, h_1), \cdots (g_n, h_n))\in (G\times H)^{\times n}$,
$\sigma\in \Sigma_n$,
$$\mathbb{P}_{(\underline{(g, h)},\sigma)}(x\boxtimes y)=res|^{\Lambda_{G\wr\Sigma_n}(\underline{g}, \sigma)\times_{\mathbb{T}}\Lambda_{H\wr\Sigma_n}(\underline{h}, \sigma)}
_{\Lambda_{(G\times H)\wr\Sigma_n}(\underline{(g, h)},
\sigma)}\mathbb{P}_{(\underline{g}, \sigma)}(x)\boxtimes
\mathbb{P}_{(\underline{h}, \sigma)}(y).$$ \label{main1p}
\end{theorem}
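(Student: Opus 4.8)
The plan is to verify the four axioms one at a time, reducing each to a statement about the intermediate groupoids $d_{(\underline g,\sigma)}(X)$ and the functors $U$, $(\ )_k$, $\boxtimes$, $f_{(\underline g,\sigma)}$, and then using the groupoid isomorphisms and compatibility diagrams established in Section \ref{111}. For (i), when $n=1$ the only element of $G\wr\Sigma_1=G$ gives $\sigma=(1)$, a single $1$-cycle; the functor $U$ is the identity on $\Lambda(X/\!\!/G)$, the rescaling $(\ )_1$ is the identity, the external product over a single factor is trivial, and by Proposition \ref{sim}(i) $f_{(g,(1))}$ is the identity on $X^g/\!\!/\Lambda_G(g)$. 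Composing, $\mathbb P_{(g,(1))}=\mathrm{Id}$, so $\mathbb P_1=\mathrm{Id}$. For $n=0$ the groupoid $X^{\times 0}/\!\!/(G\wr\Sigma_0)$ is a point, the empty external product is the unit bundle, and $\mathbb P_0(x)=1$.

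For (ii), the key geometric input is Proposition \ref{sim}(ii)--(iii): the groupoid $(X^{\times n})^{(\underline g,\sigma)}/\!\!/\Lambda_{G\wr\Sigma_n}(\underline g,\sigma)\times_{\mathbb T}(X^{\times m})^{(\underline h,\tau)}/\!\!/\Lambda_{G\wr\Sigma_m}(\underline h,\tau)$ is isomorphic to $(X^{\times(n+m)})^{(\underline g,\underline h;\sigma\tau)}/\!\!/\Lambda_{G\wr\Sigma_{n+m}}(\underline g,\underline h;\sigma\tau)$, compatibly with $f_{(\underline g,\sigma)}\times f_{(\underline h,\tau)}$ and $f_{(\underline g,\underline h;\sigma\tau)}$. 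I would chase the explicit formula (\ref{ee}): the cycles of $\sigma\tau$ (viewing $\Sigma_n\times\Sigma_m\subseteq\Sigma_{n+m}$) are exactly the cycles of $\sigma$ together with the cycles of $\tau$, so the double external product $\boxtimes_k\boxtimes_{(i_1\cdots i_k)}(\mathcal V_{g_{i_k}\cdots g_{i_1}})_k$ over the cycles of $\sigma\tau$ factors as the external product of the corresponding products over the cycles of $\sigma$ and of $\tau$. Since the groupoid $d_{(\underline g,\sigma)}(X)\times_{\mathbb T}d_{(\underline h,\tau)}(X)$ is by construction a subgroupoid of $d_{(\underline g,\underline h;\sigma\tau)}(X)$ on the same objects, restricting the bundle $\mathbb P_{(\underline g,\underline h;\sigma\tau)}(x)$ along this inclusion gives precisely the bundle whose $f^*$-pullback is $\mathbb P_{(\underline g,\sigma)}(x)\boxtimes\mathbb P_{(\underline h,\tau)}(x)$; translating the groupoid inclusion back through $f_{(\underline{(g,h)},\sigma\tau)}$ it becomes the group inclusion $\Lambda_{G\wr\Sigma_n}(\underline g,\sigma)\times_{\mathbb T}\Lambda_{G\wr\Sigma_m}(\underline h,\tau)\hookrightarrow\Lambda_{G\wr\Sigma_{m+n}}(\underline g,\underline h;\sigma\tau)$, i.e.\ the claimed restriction map. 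The compatibility of the two K\"unneth maps in diagram (\ref{kudqellc}) is exactly what makes the two descriptions of the external product agree, so (ii) follows.

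Axiom (iii) is the main obstacle, and I expect most of the work to be here: one must identify the cycle structure of the composite permutation $(\underline\tau,\sigma)\in\Sigma_m\wr\Sigma_n\subseteq\Sigma_{mn}$ in terms of the data $(\underline{\underline{h,\tau}},\sigma)\in (G\wr\Sigma_m)\wr\Sigma_n$, and check that iterating the construction (\ref{ee}) once inside $\mathbb P_m$ and once outside produces, up to the canonical groupoid isomorphisms of Theorem \ref{iprm} and the rescaling identities $((\ )_k)_r=(\ )_{kr}$ for $(\ )_k$ and $((\ )^\Lambda_k)_r=(\ )^\Lambda_{kr}$, the same bundle on $d_{(\underline{\underline h},(\underline\tau,\sigma))}(X)$ that $\mathbb P_{(\underline{\underline h},(\underline\tau,\sigma))}$ gives on the nose, with the discrepancy being exactly the inclusion of $\Lambda_{(G\wr\Sigma_m)\wr\Sigma_n}((\underline{\underline{h,\tau}});\sigma)$ into $\Lambda_{G\wr\Sigma_{mn}}(\underline{\underline h},(\underline\tau,\sigma))$. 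The bookkeeping of how a $k$-cycle of $\sigma\in\Sigma_n$ interacts with an $r$-cycle of the relevant $\tau_j\in\Sigma_m$ to produce cycles of length $\mathrm{lcm}$-type in $\Sigma_{mn}$, and tracking the scalars $q$ contributed by the nested rescalings, is the combinatorial heart of the argument; I would organize it by first treating the case $G$ trivial (where it reduces to the associativity of the symmetric-power power operation in $K$-theory, which is Ganter's axiom for $K$-theory, combined with the dilation bookkeeping $\mathrm{lcm}$ of rotation speeds) and then twisting by the $G$-data via the $\beta^{\underline h,\tau}_{j,i}$ of (\ref{betadef}).

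For (iv), the input is again a groupoid-level statement: when $\underline{(g,h)}\in(G\times H)^{\times n}$ and $\sigma\in\Sigma_n$, the product $(g_{i_k},h_{i_k})\cdots(g_{i_1},h_{i_1})=(g_{i_k}\cdots g_{i_1},\,h_{i_k}\cdots h_{i_1})$ for each cycle, so $(X\times Y)^{g_{i_k}\cdots g_{i_1},h_{i_k}\cdots h_{i_1}}=X^{g_{i_k}\cdots g_{i_1}}\times Y^{h_{i_k}\cdots h_{i_1}}$, and the groupoid $d_{(\underline{(g,h)},\sigma)}(X\times Y)$ is the $\times_{\mathbb T}$-fibre product $d_{(\underline g,\sigma)}(X)\times_{\mathbb T}d_{(\underline h,\sigma)}(Y)$ on the nose; the relevant group $\Lambda_{(G\times H)\wr\Sigma_n}(\underline{(g,h)},\sigma)$ sits inside $\Lambda_{G\wr\Sigma_n}(\underline g,\sigma)\times_{\mathbb T}\Lambda_{H\wr\Sigma_n}(\underline h,\sigma)$. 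The external product bundle $\boxtimes_k\boxtimes_{(i_1\cdots i_k)}((\mathcal V\boxtimes\mathcal W)_{g_{i_k}\cdots g_{i_1},h_{i_k}\cdots h_{i_1}})_k$ then factors as an external product of the two one-variable products, because rescaling $(\ )_k$ commutes with $\boxtimes$ and $(\mathcal V\boxtimes\mathcal W)_k=(\mathcal V)_k\boxtimes(\mathcal W)_k$ (both decompose into $q$-eigenspaces compatibly, cf.\ (\ref{lpok})). Pulling back along $f_{(\underline{(g,h)},\sigma)}$ and using Proposition \ref{sim}(iii) to compare $f_{(\underline{(g,h)},\sigma)}$ with $f_{(\underline g,\sigma)}\times f_{(\underline h,\sigma)}$, the claimed restriction formula drops out. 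Throughout I would rely on the fact that all four functors in (\ref{ee}) are induced by pullback along maps of groupoids, so that each axiom reduces to a diagram of groupoids commuting up to canonical natural isomorphism, and the only genuinely new content beyond Section \ref{111} is the combinatorics of cycle decompositions and the multiplicativity of the rescaling operators $(\ )_k$.
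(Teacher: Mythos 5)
Your treatment of (i), (ii), and (iv) matches the paper's proof essentially step for step: (i) follows immediately from the formula~(\ref{ee}) together with Proposition~\ref{sim}(i); (ii) reduces, as you say, to the fact that the cycles of $\sigma\tau\in\Sigma_{n}\times\Sigma_{m}\subseteq\Sigma_{n+m}$ are the disjoint union of the cycles of $\sigma$ and of $\tau$, plus Proposition~\ref{sim}(iii) and the compatibility diagram~(\ref{kudqellc}); and (iv) is the direct computation you describe, using $(\mathcal V\boxtimes\mathcal W)_k=(\mathcal V)_k\boxtimes(\mathcal W)_k$ and Proposition~\ref{sim}(iii).

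Where your sketch goes wrong is in the combinatorics of (iii), which is also where the real work in the paper's proof lives. You write that a $k$-cycle of $\sigma\in\Sigma_n$ and an $r$-cycle of the composite $\tau_{i_k}\cdots\tau_{i_1}\in\Sigma_m$ produce ``cycles of length lcm-type'' in $\Sigma_{mn}$. That is not what happens: as the paper verifies from the formula $(\underline\tau,\sigma)^q\cdot(i,j)=(\sigma^q(i),\tau_{\sigma^q(i)}\cdots\tau_{\sigma(i)}(j))$, the orbit of $(i,j)$ has exactly $kr$ elements, i.e.\ the resulting cycle has length $kr$, the \emph{product} rather than the lcm. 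You actually use the correct identity $((\ )_k)_r=(\ )_{kr}$ one sentence later, so the two statements contradict each other; if you tried to carry out the rescaling bookkeeping with $\mathrm{lcm}(k,r)$ it would not close up, since the rotation speed after nesting is $q^{1/(kr)}$, not $q^{1/\mathrm{lcm}(k,r)}$. You also need to say explicitly how the two layers of $f^*$'s collapse to the single $f^*_{(\underline{\underline h},(\underline\tau,\sigma))}$ restricted along the inclusion $\Lambda_{(G\wr\Sigma_m)\wr\Sigma_n}\hookrightarrow\Lambda_{G\wr\Sigma_{mn}}$; the paper does this by an explicit calculation of the twisted products $H_{\underline i\underline j}$ of the $h_{i,j}$'s and then quotes Proposition~4.11 of \cite{Gan07} for the final identification, a reduction your outline does not account for and would have to reprove.
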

\begin{proof}

We check each one respectively.

(i) When $n=1$, all the cycles of a permutation is 1-cycle.
$(\mbox{ })_1$ and the homeomorphism $f_{(g, (1))}$ are both
identity maps. Directly from the formula (\ref{ee}),
$\mathbb{P}_1(x)=x.$

\bigskip
(ii) \begin{align*}&\mathbb{P}_{(\underline{g},
\sigma)}(x)\boxtimes \mathbb{P}_{(\underline{h}, \tau)}(x)\\
=&f^*_{(\underline{g},
\sigma)}(\boxtimes_{k}\boxtimes_{(i_1,\cdots
i_k)}(x_{g_{i_k}\cdots g_{i_1}})_k)\boxtimes f^*_{(\underline{h},
\tau)}(\boxtimes_{j}\boxtimes_{(r_1,\cdots
r_j)}(x_{h_{r_j}\cdots h_{r_1}})_j)\\
=&res|^{\Lambda_{G\wr\Sigma_{m+n}}(\underline{g},\underline{h};
\sigma\tau)} _{\Lambda_{G\wr\Sigma_n}(\underline{g},
\sigma)\times_{\mathbb{T}}\Lambda_{G\wr\Sigma_m}(\underline{h},
\tau)} f^*_{(\underline{g}, \underline{h}; \sigma
\tau)}((\boxtimes_{k}\boxtimes_{(i_1,\cdots i_k)}(x_{g_{i_k}\cdots
g_{i_1}})_k)\\ &\boxtimes (\boxtimes_{j}\boxtimes_{(r_1,\cdots
r_j)}(x_{h_{r_j}\cdots h_{r_1}})_j)).\end{align*}where
$(i_1,\cdots i_k)$ goes over all the $k$-cycles of $\sigma$ and
$(r_1,\cdots r_j)$ goes over all the $j$-cycles of $\tau$ and
$(\mbox{                  })_k$ is the map cited in (\ref{lpok}).
The second step is from Proposition \ref{sim} (iii).

$$f^*_{(\underline{g}, \underline{h}; \sigma
\tau)}((\boxtimes_{k}\boxtimes_{(i_1,\cdots i_k)}(x_{g_{i_k}\cdots
g_{i_1}})_k)\boxtimes (\boxtimes_{j}\boxtimes_{(r_1,\cdots
r_j)}(x_{h_{r_j}\cdots h_{r_1}})_j))$$ is exactly
$$\mathbb{P}_{(\underline{g},\underline{h};
\sigma\tau)}(x).$$

\bigskip
(iii) Recall that for an element $(\underline{\tau},
\sigma)=(\tau_1, \cdots \tau_n, \sigma)\in\Sigma_{mn}$, it acts on
the set with $mn$ elements
$$\{(i, j)| 1\leq i\leq n, 1\leq j \leq m\}$$ in this way: $$(\underline{\tau}, \sigma)\cdot(i, j)=(\sigma(i),
\tau_{\sigma(i)}(j)).$$ That also shows how to view it as an
element in $\Sigma_{mn}$.

Then for any integer $q$, \begin{equation}(\underline{\tau},
\sigma)^q\cdot (i, j)=(\sigma^q(i),
\tau_{\sigma^q(i)}\tau_{\sigma^{q-1}(i)}\cdots
\tau_{\sigma(i)}(j)).\label{not}\end{equation}
\smallskip

To find all the cycles of $(\underline{\tau}, \sigma)$ is exactly
to find all the orbits of the action by $(\underline{\tau},
\sigma)$. If $i$ belongs to an $s$-cycle of $\sigma$ and $j$
belongs to a $r$-cycle of
$\tau_{\sigma^s(i)}\tau_{\sigma^{s-1}(i)}\cdots \tau_{\sigma(i)}$,
then the orbit containing $(i, j)$ has $sr$ elements by
(\ref{not}). In other words, $(i_1, \cdots i_s)$ is an $s$-cycle
of $\sigma$ and $(j_1, \cdots j_r)$ is a $r$-cycle of
$\tau:=\tau_{i_s}\cdots \tau_{i_1}$ if and only if
\begin{align*}\bigg(&(i_1, \tau_{i_1}(j_{r-1}))(i_2, \tau_{i_2}\tau_{i_1}(j_{r-1}))\cdots (i_s, j_r)\\
&(i_1, \tau_{i_1}(j_{r-2}))(i_2, \tau_{i_2}\tau_{i_1}(j_{r-2}))\cdots (i_s, j_{r-1})\\
&\cdots\\
&(i_1, \tau_{i_1}(j_{1}))(i_2,
\tau_{i_2}\tau_{i_1}(j_{1}))\cdots (i_s, j_2)\\
&(i_1, \tau_{i_1}(j_{r}))(i_2, \tau_{i_2}\tau_{i_1}(j_{r}))\cdots
(i_s, j_1)\bigg)\\\end{align*} is an $sr$-cycle of
$(\underline{\tau}, \sigma)$.

\begin{align*}&\mathbb{P}_{((\underline{\underline{h}, \tau});
\sigma)}(\mathbb{P}_m(x))\\ =&f^*_{((\underline{\underline{h},
\tau}); \sigma)}[\boxtimes_k\boxtimes_{(i_1,\cdots
i_k)}(\mathbb{P}_{((\underline{h}_{i_k},\tau_{i_k})\cdots(\underline{h}_{i_1},\tau_{i_1}))}(x))_k]\\
=&f^*_{((\underline{\underline{h}, \tau});
\sigma)}[\boxtimes_k\boxtimes_{(i_1, \cdots
i_k)}[f^*_{((\underline{h}_{i_k},\tau_{i_k})\cdots(\underline{h}_{i_1},\tau_{i_1}))}(\boxtimes_r\boxtimes_{(j_1,
\cdots
j_r)}(x_{H_{\underline{i}\underline{j}}})_r)]_k]\\
=&(f^*_{((\underline{\underline{h}, \tau}); \sigma)}\circ
\prod_{k, (i_1, \cdots
i_k)}f^*_{((\underline{h}_{i_k},\tau_{i_k})\cdots(\underline{h}_{i_1},\tau_{i_1}))})[\boxtimes_{k,
(i_1,\cdots i_k)}\boxtimes_{r, (j_1, \cdots
j_r)}(x_{H_{\underline{i},
\underline{j}}})_{kr}]\\
=&f^*_{(\underline{\underline{h}}, (\underline{\tau},
\sigma))}[\boxtimes_{k, (i_1,\cdots i_k)}\boxtimes_{r, (j_1,
\cdots j_r)}(x_{H_{\underline{i},
\underline{j}}})_{kr}]\end{align*} where
\begin{align*}H_{\underline{i}\underline{j}}:=\mbox{  }&h_{i_k, j_1}h_{i_{k-1}, \tau^{-1}_{i_k}(j_1)}\cdots h_{i_1, (\tau_{i_k}\cdots\tau_{i_2})^{-1}(j_1)}\\
&h_{i_k, j_2}h_{i_{k-1}, \tau^{-1}_{i_k}(j_2)}\cdots h_{i_1, (\tau_{i_k}\cdots\tau_{i_2})^{-1}(j_2)}\\
&\cdots\\
&h_{i_k, j_r}h_{i_{k-1}, \tau^{-1}_{i_k}(j_r)}\cdots h_{i_1,
(\tau_{i_k}\cdots\tau_{i_2})^{-1}(j_r)}\\=\mbox{  }&h_{i_k,
j_1}h_{i_{k-1},
\tau_{i_{k-1}}\cdots\tau_{i_2}\tau_{i_1}(j_1)}\cdots h_{i_1,
\tau_{i_1}(j_r)}\\
&h_{i_k, j_2}h_{i_{k-1},
\tau_{i_{k-1}}\cdots\tau_{i_2}\tau_{i_1}(j_2)}\cdots h_{i_1,
\tau_{i_1}(j_1)}\\
&\cdots\\
&h_{i_k, j_r}h_{i_{k-1},
\tau_{i_{k-1}}\cdots\tau_{i_2}\tau_{i_1}(j_{r-1})}\cdots h_{i_1,
\tau_{i_1}(j_{r-1})}
\end{align*}
where $(i_1, \cdots i_k)$ goes over all the $k$-cycles of
$\sigma\in\Sigma_m$ and $(j_1, \cdots j_r)$ goes over all the
$r$-cycles of $\tau_{i_k}\cdots\tau_{i_1}\in\Sigma_n$. The last step 
is by Proposition 4.11 in \cite{Gan07}.

$$f^*_{(\underline{\underline{h}}, (\underline{\tau},
\sigma))}[\boxtimes_{k, (i_1,\cdots i_k)}\boxtimes_{r, (j_1,
\cdots j_r)}(x_{H_{\underline{i}, \underline{j}}})_{kr}]$$ is the
same space as $\mathbb{P}_{(\underline{\underline{h}},
(\underline{\tau}, \sigma))}(x)$, 
but the action is restricted by
$$res|^{\Lambda_{G\wr\Sigma_{mn}}(\underline{\underline{h}}, (\underline{\tau}, \sigma))}
_{\Lambda_{(G\wr\Sigma_m)\wr\Sigma_n}((\underline{\underline{h},
\tau}); \sigma)}.$$

\bigskip
(iv)We have
\begin{align*}&\mathbb{P}_{(\underline{(g,h)},\sigma)}(x\boxtimes
y)=f^*_{(\underline{(g, h)},
\sigma)}(\boxtimes_{k}\boxtimes_{(i_1,\cdots
i_k)}((x\boxtimes y)_{(g_{i_k}\cdots g_{i_1}, h_{i_k}\cdots h_{i_1})})_k)\\ &\\
=&f^*_{(\underline{(g, h)},
\sigma)}(\boxtimes_{k}\boxtimes_{(i_1,\cdots
i_k)}(x_{g_{i_k}\cdots g_{i_1}})_k\boxtimes (y_{h_{i_k}\cdots
h_{i_1}})_k)\\ &\\ =&f^*_{(\underline{(g, h)},
\sigma)}(\boxtimes_{k}\boxtimes_{(i_1,\cdots
i_k)}(x_{g_{i_k}\cdots
g_{i_1}})_k)\boxtimes(\boxtimes_{j}\boxtimes_{(r_1,\cdots r_j)}
(y_{h_{r_j}\cdots h_{r_1}})_j)\\ &\\
=&res|^{\Lambda_{G\wr\Sigma_n}(\underline{g},
\sigma)\times_{\mathbb{T}}\Lambda_{H\wr\Sigma_n}(\underline{h},
\sigma)} _{\Lambda_{(G\times H)\wr\Sigma_n}(\underline{(g, h)},
\sigma)}(f^*_{(\underline{g}, \sigma)}\times f^*_{(\underline{h},
\sigma)})(\boxtimes_{k}\boxtimes_{(i_1,\cdots
i_k)}(x_{g_{i_k}\cdots g_{i_1}})_k)\\
&\boxtimes(\boxtimes_{j}\boxtimes_{(r_1,\cdots r_j)}
(y_{h_{r_j}\cdots h_{r_1}})_j)\\ &\\
=&res|^{\Lambda_{G\wr\Sigma_n}(\underline{g},
\sigma)\times_{\mathbb{T}}\Lambda_{H\wr\Sigma_n}(\underline{h},
\sigma)} _{\Lambda_{(G\times H)\wr\Sigma_n}(\underline{(g, h)},
\sigma)}f^*_{(\underline{g},\sigma)}[\boxtimes_{k}\boxtimes_{(i_1,\cdots
i_k)}(x_{g_{i_k}\cdots g_{i_1}})_k]\\ &\boxtimes
f^*_{(\underline{h},\sigma)}[\boxtimes_{j}\boxtimes_{(r_1,\cdots
r_j)} (y_{h_{r_j}\cdots h_{r_1}})_j],\end{align*}where $(i_1,
\cdots i_k)$ goes over all the $k-$cycles of $\sigma$ and $(r_1,
\cdots r_j)$ goes over all the $j-$cycles of $\sigma$.  It equals
to
$$res|^{\Lambda_{G\wr\Sigma_n}(\underline{g}, \sigma)\times_{\mathbb{T}}\Lambda_{H\wr\Sigma_n}(\underline{h}, \sigma)}
_{\Lambda_{(G\times H)\wr\Sigma_n}(\underline{g},
\sigma)}\mathbb{P}_{(\underline{g}, \sigma)}(x)\boxtimes
\mathbb{P}_{(\underline{h}, \sigma)}(y).$$

\end{proof}

\begin{example}Let $G$ be the trivial group and $X$ a space. Let $\sigma\in\Sigma_n$.  Then $QEll^*_G(X)=
K^*_{\mathbb{T}}(X)$. The functor $f_{(\underline{1}, \sigma)}$
gives the homeomorphism
$$(X^{\times n})^{(\underline{1}, \sigma)}\cong
\prod_k\prod_{(i_1, \cdots i_k)}X,$$ where the second direct
product goes over all the $k$-cycles of $\sigma$. By (\ref{ee}),
the power operation is
$$\mathbb{P}_{(\underline{1}, \sigma)}(x)=\boxtimes_k\boxtimes_{(i_1, \cdots
i_k)}(x)_k.$$

When $n=2$, $\mathbb{P}_{(\underline{1}, (1)(1))}(x)=x\boxtimes x$
and $\mathbb{P}_{(\underline{1}, (12))}(x)=(x)_2$.

When $n=3$, $\mathbb{P}_{(\underline{1}, (1)(1)(1))}(x)=x\boxtimes
x\boxtimes x$, $\mathbb{P}_{(\underline{1},
(12)(1))}(x)=(x)_2\boxtimes x$, and $\mathbb{P}_{(\underline{1},
(123))}(x)=(x)_3$.

When $n=4$, $\mathbb{P}_{(\underline{1}, (1)(1)(1)(1))}(x)=
x\boxtimes x\boxtimes x\boxtimes x$, $\mathbb{P}_{(\underline{1},
(12))}(x)=(x)_2\boxtimes x\boxtimes x$,
$\mathbb{P}_{(\underline{1}, (123))}(x)=(x)_3\boxtimes x$,
$\mathbb{P}_{(\underline{1}, (1234))}(x)=(x)_4$, and
$\mathbb{P}_{(\underline{1}, (12)(34))}(x)=(x)_2\boxtimes (x)_2.$
Note that there is a $\Sigma_2-$action permuting the two $(x)_2$
in $\mathbb{P}_{(\underline{1}, (12)(34))}(x)$.

\label{pointsymmpower}

\end{example}

\begin{remark}
We have the relation between equivariant Tate K-theory and
quasi-elliptic cohomology
\begin{equation}QEll_G(X)\otimes_{\mathbb{Z}[q^{\pm}]}\mathbb{Z}((q))\cong (K_{Tate})_G(X).\end{equation}

It extends uniquely to  a power operation for Tate K-theory
$$
QEll_G(X)\otimes_{\mathbb{Z}[q^{\pm}]}\mathbb{Z}((q))\longrightarrow
QEll_{G\wr\Sigma_n}(X^{\times
n})\otimes_{\mathbb{Z}[q^{\pm}]}\mathbb{Z}((q))
$$
which  is the stringy power operation $P^{string}_n$ constructed
in Definition 5.10, \cite{Gan07}. It is elliptic in the sense of
\cite{AHS04}. \label{stringyrelation}

\end{remark}

\section{Orbifold quasi-elliptic cohomology and its power operation}\label{orbifoldquasibeforepower}

The elliptic cohomology of orbifolds involves a rich interaction
between the orbifold structure and the elliptic curve. Ganter
explores this interaction in the case of the Tate curve in
\cite{Gan13}, describing $K_{Tate}$ for an orbifold $X$ in terms
of the equivariant K-theory and the groupoid structure of $X$.

In Section \ref{sb1} we give a description of orbifold
quasi-elliptic cohomology. In Section \ref{sb2} we   discuss the
inertia groupoid of symmetric power and the groupoids needed for
the construction of the power operation in Section \ref{s2}.

\subsection{Definition}\label{sb1}

We have two ways to define orbifold quasi-elliptic cohomology. The
first one is motivated by Ganter's definition of orbifold Tate
K-theory in Section 2, \cite{Gan13}. The other one is a
generalization of the definition of quasi-elliptic cohomology in
Section \ref{orbqec}.

We consider the category of groupoids $\mathcal{G}pd$ as a
2-category  with small topological groupoids as the objects and
with
$$\mbox{1Hom}(X, Y)=Fun(X, Y).$$  This 2-category is different from that in Section 3
\cite{LerStack}. Let $\mathcal{G}pd^{cen}$ denote the 2-category
of centers of groupoids defined in Section 2, \cite{Gan13}. Ganter
constructed in Example 2.3 \cite{Gan13} a 2-functor for any $k\in
\mathbb{Z}$
\begin{align*}\mathcal{G}pd &\longrightarrow \mathcal{G}pd^{cen}\\
X&\mapsto (I(X), \xi^k)
\end{align*} where $\xi^k$ is the center element of the inertia groupoid $I(X)$ sending
$(x, g)$ to $(x, g^k)$. We use $\xi$ to denote $\xi^1$.

Let $\mbox{pt}/\!\!/\mathbb{R}\times_{1\sim\xi}I(X)$ denote the
groupoid
$$(\mbox{pt}/\!\!/\mathbb{R})\times I(X)/\sim$$ with
$\sim$ generated by $1\sim\xi$.

\begin{definition}For any topological groupoid $X$, the quasi-elliptic
cohomology $QEll^*(X)$ is the orbifold K-theory
\begin{equation}K^*_{orb}(\mbox{pt}/\!\!/\mathbb{R}\times_{1\sim
\xi}I(X)).\label{orbqecnora}\end{equation}

In other words, 
for a topological groupoid $X$, $QEll(X)$ is defined to be a
subring of  $K_{orb}(X)\llbracket q^{\pm\frac{1}{|\xi|}}
\rrbracket$ that is the Grothendieck group of finite sums
$$\sum_{a\in\mathbb{Q}} V_a q^a$$ satisfying: 
$$\mbox{for each }a\in\mathbb{Q} \mbox{, the coefficient }V_a \mbox{ is an }e^{2\pi ia}-\mbox{eigenbundle of }\xi.$$

In the global quotient case,
$$QEll^*(X/\!\!/G)=QEll^*_G(X).$$ \label{orbqecnoradef}\end{definition}

\bigskip

In addition, for any topological groupoid $X$, we can also
consider the category
$$Loop_1(X):=Bibun(S^1/\!\!/\ast, X)$$ and formulate
$Loop_1^{ext}(X)$ by adding the rotation action by circle, as the
construction in Section \ref{orbifoldloop}.  Afterwards we can
construct the subgroupoid $\Lambda(X)$ of  $Loop_1^{ext}(X)$
consisting of the constant loops, which is isomorphic to
$\mbox{pt}/\!\!/\mathbb{R}\times_{1\sim \xi}I(X)$. So in this way
we give an equivalent definition of orbifold quasi-elliptic
cohomology.

\subsection{Symmetric powers of orbifolds and its inertia
groupoid}\label{sb2}

In this section  we  introduce the groupoids necessary for the
construction of the power operation. In  Lemma \ref{eq1},
 \ref{eq2} and \ref{eq3} we show the relation between
them.

For groupoids  like $\mbox{pt}/\!\!/\mathbb{R}\times_{k\sim\xi}X$,
instead of the total symmetric power (Definition 3.1,
\cite{Gan13}) $S(\mbox{pt}/\!\!/\mathbb{R}\times_{k\sim\xi}X)$, we
consider a subgroupoid
$$S^R(\mbox{pt}/\!\!/\mathbb{R}\times_{k\sim\xi}X)$$ of
it.

\begin{definition}[The groupoid $S^R(\mbox{pt}/\!\!/\mathbb{R}\times_{k\sim\xi}X)$]
Let $$\rho_k:
\mbox{pt}/\!\!/\mathbb{R}\times_{k\sim\xi}X\longrightarrow
\mbox{pt}/\!\!/(\mathbb{R}/\mathbb{Z})$$ be the functor sending
all the objects to the single point, and an arrow $$[g, t]$$ to
$$t\mbox{ mod }\mathbb{Z}.$$

Let
$\times_{\mathbb{R}}(\mbox{pt}/\!\!/\mathbb{R}\times_{k\sim\xi}X)$
denote the limit of the diagram of groupoids
$$\xymatrix{\mbox{pt}/\!\!/\mathbb{R}\times_{k\sim\xi}X \ar[r]^{\rho_k}
&\mbox{pt}/\!\!/(\mathbb{R}/\mathbb{Z})
&\mbox{pt}/\!\!/\mathbb{R}\times_{k\sim\xi}X \ar[l]_{\rho_k}}.$$

Let
$$\times^n_{\mathbb{R}}(\mbox{pt}/\!\!/\mathbb{R}\times_{k\sim\xi}X)$$
denote the limit of $n$ morphisms $\rho_k$s. It inherits a
$\Sigma_n-$action on it by permutation from that on the product
$(\mbox{pt}/\!\!/\mathbb{R}\times_{k\sim\xi}X)^{\times n}$.

Let $S_n^R(\mbox{pt}/\!\!/\mathbb{R}\times_{k\sim\xi}X)$ denote
the groupoid with the same objects as
$$\times^n_{\mathbb{R}}(\mbox{pt}/\!\!/\mathbb{R}\times_{k\sim\xi}X)$$
and morphisms of the form $([g_1, t_1], \cdots [g_n, t_n];
\sigma)$ with $([g_1, t_1], \cdots [g_n, t_n])$ a morphism in
$\times^n_{\mathbb{R}}(\mbox{pt}/\!\!/\mathbb{R}\times_{k\sim\xi}X)$
and $\sigma\in\Sigma_n$. This new groupoid
$S_n^R(\mbox{pt}/\!\!/\mathbb{R}\times_{k\sim\xi}X)$ is a
subgroupoid of $$(\mbox{pt}/\!\!/\mathbb{R}\times_{k\sim\xi}X)\wr
\Sigma_n.$$

Define
\begin{equation}S^R(\mbox{pt}/\!\!/\mathbb{R}\times_{k\sim\xi}X):=\coprod\limits_{n\geq
0}S_n^R(\mbox{pt}/\!\!/\mathbb{R}\times_{k\sim\xi}X).\end{equation}
\end{definition}

The triple $$(S^R(\mbox{pt}/\!\!/\mathbb{R}\times_{k\sim\xi}X),
\ast, (\mbox{  }))$$ is  a symmetric monoid  where $\ast$ is the
concatenation and the unit $(\mbox{   })$ is the unique object in
$X\wr\Sigma_0$. $S^R(\mbox{pt}/\!\!/\mathbb{R}\times_{k\sim\xi}X)$
is the symmetric product that we will use to formulate the power
operation.

\begin{lemma}Let $\Phi_k(X)$ denote the groupoid in Definition 3.3, \cite{Gan13}, and $\phi_k\in$ \\ Center$(\Phi_k)$ denote the
restriction of $S_k(\xi)$ to $\Phi_k$.   For each integer $k\geq
1$, there is an equivalence between
$$\mbox{pt}/\!\!/\mathbb{R}\times_{1\sim\phi_k}\Phi_k(X)$$ and the
groupoid $\mbox{pt}/\!\!/\mathbb{R}\times_{1\sim
\xi^{\frac{1}{k}}}I(X)[\xi^{\frac{1}{k}}]$ which identifies
$\phi_k$ with $\xi^{\frac{1}{k}}$. Here $\xi^{\frac{1}{k}}$ is an
added element such that the composition of $k$
$\xi^{\frac{1}{k}}$s is $\xi$. \label{eq1}\end{lemma}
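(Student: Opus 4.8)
The plan is to exhibit mutually quasi-inverse functors between the two groupoids by ``contracting'' a $k$-cycle to a single object and, conversely, ``replicating'' an object $k$-fold, and then to check that the contraction functor carries the marked center element $\phi_k$ to $\xi^{\frac{1}{k}}$. Recall that an object of $\Phi_k(X)$ is a triple $(\underline{x},\underline{g},\varsigma_k)$ with $\varsigma_k$ a generator of a cyclic subgroup of $\Sigma_k$ acting transitively on $\{1,\dots,k\}$ and each $g_i$ a morphism $x_i\to x_{\varsigma_k^{-1}(i)}$ of $X$; equivalently it is a functor $\mbox{pt}/\!\!/\mathbb{Z}\to S(X)$ whose image is a single $k$-cycle in $X\wr\Sigma_k$. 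Under this optic the guiding picture is: such a functor is the same as a functor out of the (connected, transitive) action groupoid of $\mathbb{Z}$ on a $k$-element set, i.e.\ abstractly an object of $I(X)$, but carried together with the rotational $\mathbb{Z}/k$-symmetry of the $k$-cycle. The marked element $\phi_k=S_k(\xi)|_{\Phi_k}$ is the tautological automorphism $(\underline{g},\varsigma_k)$; going $k$ steps around the $\mathbb{Z}/k$ applies the ambient $\mathbb{Z}$-action once, i.e.\ $\phi_k^{\,k}$ is the ``diagonal'' automorphism whose basepoint entry is the composite of the $g$'s once around the orbit. Thus $(\Phi_k(X),\phi_k)$ is, up to equivalence, $I(X)$ equipped with a formal central $k$-th root of $\xi$, which is precisely what $I(X)[\xi^{\frac{1}{k}}]$ packages; imposing $1\sim\phi_k$ on one side and $1\sim\xi^{\frac{1}{k}}$ on the other then gives the claim. (Concretely, $\mbox{pt}/\!\!/\mathbb{R}\times_{1\sim\xi^{\frac{1}{k}}}I(X)[\xi^{\frac{1}{k}}]$ is just $\mbox{pt}/\!\!/\mathbb{R}\times_{k\sim\xi}I(X)$, the formal root having been identified with $1\in\mathbb{R}$.)

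To make this precise I would first build the contraction functor
$$F\colon\ \mbox{pt}/\!\!/\mathbb{R}\times_{1\sim\phi_k}\Phi_k(X)\ \longrightarrow\ \mbox{pt}/\!\!/\mathbb{R}\times_{1\sim\xi^{\frac{1}{k}}}I(X)[\xi^{\frac{1}{k}}].$$
On an object $(\underline{x},\underline{g},\varsigma_k)$ let $j_0$ be the smallest index in $\{1,\dots,k\}$ and set $F(\underline{x},\underline{g},\varsigma_k):=(x_{j_0},\gamma)$, where $\gamma:=g_{\varsigma_k^{-(k-1)}(j_0)}\circ\cdots\circ g_{\varsigma_k^{-1}(j_0)}\circ g_{j_0}\colon x_{j_0}\to x_{j_0}$ is the around-the-orbit composite. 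On a morphism $[(\underline{h},\rho),t]$ of the crossed product, where $\rho$ conjugates $\varsigma_k$ to the target cycle $\varsigma_k'$ and $(\underline{h},\rho)$ intertwines the two tautological automorphisms, I would write $\rho=\rho_0\,\varsigma_k^{\,m}$ uniquely with $\rho_0$ carrying the source basepoint to the target basepoint and $m\in\{0,\dots,k-1\}$, and send $[(\underline h,\rho),t]$ to the morphism of $I(X)$ induced by $\underline h$ at the basepoints, post-composed with the $m$-th power of $\xi^{\frac{1}{k}}$, with $\mathbb{R}$-coordinate $t$. The relation $(\xi^{\frac{1}{k}})^k=\xi$ on the target exactly absorbs the ambiguity $m\mapsto m+k$, $t\mapsto t-1$ coming from $1\sim\phi_k$ on the source, so $F$ descends to the quotients; and by inspection $F$ sends the class of $\phi_k$ (the case $\rho_0=\mathrm{id}$, $m=1$, $t=0$) to $\xi^{\frac{1}{k}}$, as required.

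Next I would define the quasi-inverse $F'$ by cyclic replication: $(x_0,g)\in I(X)$ goes to the triple on $(x_0,\dots,x_0)$ with $\varsigma_k=(1\,2\,\cdots\,k)$, $g_1=g$ and $g_2=\cdots=g_k=\mathrm{id}_{x_0}$ (so the around-the-orbit composite is $g$); a morphism of $I(X)$ goes to its $k$-fold replica; and $\xi^{\frac{1}{k}}$, equivalently $1\in\mathbb{R}$ on the source, goes to the one-step rotation automorphism of the replicated object, with $\mathbb{R}$-coordinate $0$ — legitimate because its $k$-th power is the diagonal automorphism $g$, matching $(\xi^{\frac{1}{k}})^k=\xi$. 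Then $F\circ F'=\mathrm{id}$, while $F'\circ F$ is naturally isomorphic to $\mathrm{id}$ via the evident morphism of $X\wr\Sigma_k$ that re-indexes each cycle to start at $j_0$ and collapses along $\gamma$; naturality is a direct diagram chase. Finally, that $\varsigma_k$ is allowed to range over all generators of the cyclic group is harmless: all $k$-cycles are conjugate in $\Sigma_k$, so any two objects of $\Phi_k(X)$ differing only in the choice of generator are isomorphic, and restricting to a fixed generator yields an equivalent full subgroupoid.

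The main obstacle is the bookkeeping in the middle step: one must check that the decomposition $\rho=\rho_0\varsigma_k^{\,m}$ is compatible with composition of morphisms, and that shifting $m$ by a multiple of $k$ against a compensating integer shift of the $\mathbb{R}$-coordinate is \emph{exactly} the relation imposed by $1\sim\phi_k$ on the source and by $1\sim\xi^{\frac{1}{k}}$ together with $(\xi^{\frac{1}{k}})^k=\xi$ on the target — in other words, that $F$ and $F'$ really descend to the quotient groupoids and are inverse equivalences there rather than merely on the underlying groupoids. In the global quotient case $X=\mbox{pt}/\!\!/G$ this amounts to the identification $C_{G\wr\Sigma_k}(\underline g,\varsigma_k)\cong C^k_G(\Gamma)$ with $\Gamma$ the around-the-cycle product (Definition \ref{lambdak1}), which is already implicit in the analysis of $d_{(\underline g,\sigma)}(X)$ in Section \ref{setupad}; the general statement is the groupoid-level upgrade of that computation, and I would present it in that order.
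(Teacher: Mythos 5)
Your proposal is correct and follows essentially the same route as the paper: the paper's proof defines the very contraction functor $A_k$ you call $F$ (sending a $k$-cycle object $(\underline{x},\underline{g},(1\,2\cdots k))$ to the basepoint together with the around-the-orbit composite, and a morphism $[\underline{h},(1\,2\cdots k)^m,t]$ to the $\beta$-element of (\ref{betadef}) at $\mathbb{R}$-coordinate $m+t$, which is your $(\xi^{1/k})^m$-twist written into the $\mathbb{R}$-slot), and then refers to Theorem~\ref{iptm} for invertibility. You spell out the quasi-inverse $F'$ by cyclic replication and handle the variation of the generator $\varsigma_k$ explicitly, both of which the paper leaves implicit; these are welcome additions but the underlying argument is the same.
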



\begin{proof}

We can define a functor $$A_k:
\mbox{pt}/\!\!/\mathbb{R}\times_{1\sim\phi_k}\Phi_k(X)\longrightarrow
\mbox{pt}/\!\!/\mathbb{R}\times_{1\sim
\xi^{\frac{1}{k}}}I(X)[\xi^{\frac{1}{k}}]$$ by sending an object
$(\underline{x}, \underline{g}, (1 2 \cdots k))$ to $(x_1,
g_k\cdots g_1)$ and sending a morphism $[\underline{h}, (1 2\cdots
k)^m, t]$ to $$[h_kg^{-1}_{1-m}\cdots g^{-1}_{k-1}g^{-1}_k,
m+t].$$

Recall $h_kg^{-1}_{1-m}\cdots g^{-1}_{k-1}g^{-1}_k$ conjugates
$g_k\cdots g_1$ to itself. It is the element
$$\beta^{\underline{h}, \mbox{Id}}_{(1 2\cdots k), (1 2\cdots
k)}$$ defined in (\ref{betadef}). The functor $A_k$ is an
isomorphism, as implied in the proof of Theorem \ref{iptm}.

\end{proof}

Let $\Phi(X):=\coprod\limits_{k\geq 1}\Phi_k(X)$. Let
$\phi:=\coprod\limits_{k\geq 1}\phi_k\in\mbox{Center}(\Phi)$
denote the restriction of $S(\xi)$ to $\Phi$.

Theorem \ref{iptm} can be reinterpreted as Lemma \ref{eq2}.
\begin{lemma}
The groupoid
$S^R(\coprod\limits_k\mbox{pt}/\!\!/\mathbb{R}\times_{1\sim
\xi^{\frac{1}{k}}}I(X)[\xi^{\frac{1}{k}}])$ is equivalent to
$$\mbox{pt}/\!\!/\mathbb{R}\times_{1\sim S(\xi)} I(S(X)).$$
\label{eq2}
\end{lemma}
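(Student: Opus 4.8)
The plan is to establish the equivalence by producing an explicit functor in both directions and checking they are mutually inverse up to natural isomorphism, reusing the combinatorial bookkeeping already set up for the functor $f_{(\underline{g},\sigma)}$ in Section~\ref{FunctorF}. First I would unwind both sides. On the one hand, an object of $\mbox{pt}/\!\!/\mathbb{R}\times_{1\sim S(\xi)}I(S(X))$ is, after passing to the global quotient description, a point $\underline{x}\in X^{\times n}$ together with an automorphism $(\underline{g},\sigma)\in G\wr\Sigma_n$ of it in $(X/\!\!/G)\wr\Sigma_n$, i.e.\ an object of $I(X^{\times n}/\!\!/(G\wr\Sigma_n))$, with the added real rotation coordinate and the identification $1\sim S(\xi)$, where $S(\xi)$ acts on the $\sigma$-component by sending $(\underline{g},\sigma)$ to its image under $\xi$ on each symmetric factor. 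On the other hand, $S^R(\coprod_k\mbox{pt}/\!\!/\mathbb{R}\times_{1\sim\xi^{1/k}}I(X)[\xi^{1/k}])$ is the fibre-product-over-$\mathbb{R}$ symmetric power of the groupoids appearing on the left of Lemma~\ref{eq1}: an object is a finite unordered collection of pairs $(x_a, h_a)$ with $h_a$ an automorphism of $x_a$, tagged by the cycle length $k_a$, and morphisms are permutations together with compatible arrows whose rotation coordinates all have the same image in $\mathbb{R}/\mathbb{Z}$ (the ``$\times_\mathbb{R}$'' condition) — this last condition is exactly the one that distinguishes $S^R$ from the full symmetric power $S$.

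Next I would define the comparison functor. Given $(\underline{x},\underline{g},\sigma)$, decompose $\sigma$ into its cycles; to each $k$-cycle $(i_1,\dots,i_k)$ associate the pair consisting of the point $x_{i_k}\in X^{g_{i_k}\cdots g_{i_1}}$ (equivalently the component of $\Lambda^1_{(\underline{g},\sigma)}(X)$ indexed by that cycle, cf.\ the construction of $f_{(\underline{g},\sigma)}$) and the automorphism $g_{i_k}\cdots g_{i_1}$, tagged by $k$. This is precisely the object-level recipe already used for $f_{(\underline{g},\sigma)}$ combined with the functor $A_k$ of Lemma~\ref{eq1}. On morphisms, a morphism $[(\underline{h},\tau),t]$ of the left-hand groupoid permutes cycles (via $\tau|_{W^\sigma_i}$) and on each cycle contributes the element $\beta^{\underline{h},\tau}_{\tau(\cdot),\cdot}$ of~(\ref{betadef}) with shifted rotation coordinate $m_i+t$; one checks these fit together into a morphism of $S^R$ and that the common-image-in-$\mathbb{R}/\mathbb{Z}$ condition in Definition~\ref{ADgroupoid} translates exactly to the $\times_\mathbb{R}$ condition. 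This is mostly a repackaging of Theorem~\ref{iptm} (indeed the remark preceding this lemma already says ``Theorem~\ref{iptm} can be reinterpreted as the lemma below''), so the content is to verify that the target lands in the $S^R$ subgroupoid and not merely in the full symmetric power, and that the $1\sim S(\xi)$ and $1\sim\xi^{1/k}$ identifications correspond under $A_k$.

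For the inverse, I would assemble $J_{(\underline{g},\sigma)}$ from Theorem~\ref{iptm} together with $A_k^{-1}$ from Lemma~\ref{eq1}: given an $S^R$-object, group its pairs by cycle length $k$ and by $(\underline{g},\sigma)$-equivariance class (the sets $W^\sigma_i$), read off $\sigma$ as a disjoint union of $k$-cycles of the appropriate multiplicities, build $\underline{x}\in X^{\times n}$ by the formula $\nu_{i_s}(t)=\nu_{i,k}(s+t)g_{i_1}^{-1}\cdots g_{i_s}^{-1}$ as in the proof of Theorem~\ref{iptm}, and recover the permutation/group data of a morphism of $I(S(X))$ from the permutation, the integer shifts $m^i_j$, and the $\beta$-elements, exactly as in that proof. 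The two composites being identity-up-to-isomorphism then follows from $J_{(\underline{g},\sigma)}\circ f_{(\underline{g},\sigma)}=\mathrm{Id}$, $f_{(\underline{g},\sigma)}\circ J_{(\underline{g},\sigma)}=\mathrm{Id}$ (Theorem~\ref{iptm}) and $A_k\circ A_k^{-1}=\mathrm{Id}$, $A_k^{-1}\circ A_k=\mathrm{Id}$ (Lemma~\ref{eq1}), after checking compatibility with the concatenation monoidal structures and with the respective rotation identifications.

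The main obstacle I anticipate is purely bookkeeping rather than conceptual: matching the two ``fibre over $\mathbb{R}/\mathbb{Z}$'' conditions precisely — the condition in Definition~\ref{ADgroupoid} (and in the definition of $S^R$) that all rotation parameters $t^i_j$ share a common image in $\mathbb{R}/\mathbb{Z}$ — while simultaneously keeping track of how the scaling maps $A_k$ (which rescale $\mathbb{R}/k\mathbb{Z}$ to $\mathbb{R}/\mathbb{Z}$ by $t\mapsto t/k$) interact with the global single copy of $\mbox{pt}/\!\!/\mathbb{R}$ appearing on the right, and how $S(\xi)$ versus the collection of $\xi^{1/k}$'s are identified. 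Once the right-hand side is identified via the preceding remark's equivalence $S^R(\coprod_k\mbox{pt}/\!\!/\mathbb{R}\times_{1\sim\xi^{1/k}}I(X)[\xi^{1/k}])\simeq S^R(\coprod_k\mbox{pt}/\!\!/\mathbb{R}\times_{1\sim\phi_k}\Phi_k(X))$, the statement becomes essentially the assertion that $S^R$ applied to $\coprod_k\Phi_k(X)$ with its rotation decoration reproduces $I(S(X))$ with the $S(\xi)$-decoration, which is a direct consequence of the fact that every object of $I(S(X))$ decomposes uniquely into indecomposables (objects of $\Phi_k(X)$) under $\ast$, together with Theorem~\ref{iptm} handling the morphisms. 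I would therefore present the proof as: (1) invoke the remark to replace $\xi^{1/k}$-groupoids by $\Phi_k$-groupoids; (2) invoke unique decomposition into indecomposables to reduce to a single wreath-power summand $X\wr\Sigma_n$; (3) invoke Theorem~\ref{iptm} on that summand, checking that the $S^R$-condition and the Definition~\ref{ADgroupoid} condition agree. Step~(3) is where the routine-but-delicate verification lives.
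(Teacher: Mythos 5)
Your proposal is correct and follows essentially the same route as the paper: define an explicit comparison functor by decomposing $\sigma$ into its cycles and assigning to each $k$-cycle $(i_1,\dots,i_k)$ a pair (a point of the relevant fixed space together with $g_{i_k}\cdots g_{i_1}$, tagged by $k$) with morphism data built from the $\beta^{\underline{h},\tau}_{\cdot,\cdot}$-elements and shifted rotation coordinates $m_i+t$, then reduce the verification that it is fully faithful and essentially surjective to the argument in Theorem~\ref{iptm}. The only (cosmetic) divergence is that you propose first passing through $\Phi_k$ via the remark after Lemma~\ref{eq1}, whereas the paper works with the $\xi^{1/k}$-groupoids directly; the underlying computation is identical.
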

The proof is similar to that of Theorem \ref{iptm}.

\begin{lemma}We have an equivalence of 
 groupoids $$Q^R: S^R(\mbox{pt}/\!\!/\mathbb{R}\times_{1\sim \phi}\Phi(X))\longrightarrow \mbox{pt}/\!\!/\mathbb{R}\times_{1\sim S(\xi)}
I(S(X)),$$ which is natural in $X$ and satisfies
$$Q^RS^R(\phi)=S(\xi) Q^R.$$\label{eq3}\end{lemma}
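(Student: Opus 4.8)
The plan is to construct $Q^R$ by composing the two equivalences already produced in Lemma \ref{eq1} and Lemma \ref{eq2}, and then to read off naturality and the center-element identity from the explicit descriptions of those equivalences.

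First I would unwind the left-hand side. Since $\Phi(X)=\coprod_{k\ge 1}\Phi_k(X)$ and $\phi=\coprod_{k\ge 1}\phi_k$, we have
$$\mathrm{pt}/\!\!/\mathbb{R}\times_{1\sim\phi}\Phi(X)=\coprod_{k\ge 1}\left(\mathrm{pt}/\!\!/\mathbb{R}\times_{1\sim\phi_k}\Phi_k(X)\right).$$
By Lemma \ref{eq1} each $A_k$ is an isomorphism of groupoids identifying $\phi_k$ with $\xi^{\frac{1}{k}}$; moreover $A_k$ takes a morphism $[\underline{h},(1\,2\,\cdots\,k)^m,t]$ to one whose $\mathbb{R}$-coordinate is $m+t$, and $m\in\mathbb{Z}$, so $A_k$ commutes with the projections to $\mathrm{pt}/\!\!/(\mathbb{R}/\mathbb{Z})$ that define the $S^R$-construction. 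Hence $\coprod_k A_k$ is an isomorphism over $\mathrm{pt}/\!\!/(\mathbb{R}/\mathbb{Z})$ from $\mathrm{pt}/\!\!/\mathbb{R}\times_{1\sim\phi}\Phi(X)$ onto $\coprod_k \mathrm{pt}/\!\!/\mathbb{R}\times_{1\sim\xi^{\frac{1}{k}}}I(X)[\xi^{\frac{1}{k}}]$ (cf.\ the remark following Lemma \ref{eq1}), and applying the functor $S^R$ yields an isomorphism $S^R(\coprod_k A_k)$ which carries $S^R(\phi)$ to $S^R(\coprod_k \xi^{\frac{1}{k}})$.

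Next, Lemma \ref{eq2} provides an equivalence
$$F:\ \mathrm{pt}/\!\!/\mathbb{R}\times_{1\sim S(\xi)}I(S(X))\ \longrightarrow\ S^R\!\left(\coprod_k \mathrm{pt}/\!\!/\mathbb{R}\times_{1\sim\xi^{\frac{1}{k}}}I(X)[\xi^{\frac{1}{k}}]\right);$$
choosing a quasi-inverse $G$ of $F$, I would set
$$Q^R:=G\circ S^R\!\left(\coprod_k A_k\right):\ S^R(\mathrm{pt}/\!\!/\mathbb{R}\times_{1\sim\phi}\Phi(X))\ \longrightarrow\ \mathrm{pt}/\!\!/\mathbb{R}\times_{1\sim S(\xi)}I(S(X)).$$
This is the composite of an isomorphism with an equivalence, so $Q^R$ is an equivalence. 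Naturality in $X$ is inherited: $A_k$ and $F$ are given by formulas functorial in $X$, $S^R$ is a functor, and the quasi-inverse may be chosen naturally, so $Q^R$ is natural in $X$.

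It remains to verify $Q^R S^R(\phi)=S(\xi)Q^R$. We have already noted that $S^R(\coprod_k A_k)$ sends $S^R(\phi)$ to $S^R(\coprod_k\xi^{\frac{1}{k}})$. On the other hand, inspecting the explicit formula for $F$ given in the proof of Lemma \ref{eq2}, one sees that $F$ matches the center element $S(\xi)$ of $\mathrm{pt}/\!\!/\mathbb{R}\times_{1\sim S(\xi)}I(S(X))$ with $S^R(\coprod_k\xi^{\frac{1}{k}})$; equivalently $G$ sends $S^R(\coprod_k\xi^{\frac{1}{k}})$ back to $S(\xi)$. Composing these two facts gives $Q^R S^R(\phi)=S(\xi)Q^R$. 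The main obstacle in the whole argument is precisely this last step: keeping the various identifications — $\phi_k\leftrightarrow\xi^{\frac{1}{k}}$, the integer shifts $m+t\equiv t\pmod{\mathbb{Z}}$, and the passage through $S^R$ and through the formula for $F$ — consistently aligned. Everything else is formal, the substance of Lemma \ref{eq3} being packaged already in Lemmas \ref{eq1} and \ref{eq2}.
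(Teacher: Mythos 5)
Your proof takes a genuinely different route from the paper's. The paper constructs $Q^R$ directly: it takes the inclusion $I:\mbox{pt}/\!\!/\mathbb{R}\times_{1\sim\phi}\Phi(X)\hookrightarrow\mbox{pt}/\!\!/\mathbb{R}\times_{1\sim S(\xi)}I(S(X))$ (whose essential image is exactly the indecomposable objects), applies $S$, composes with the counit $\epsilon$ of the free-forgetful adjunction $(S,\ast,(\ ))\dashv\mbox{forget}$, and then restricts to $S^R$; essential surjectivity follows from indecomposability, and full faithfulness is precisely what the passage from $S$ to $S^R$ buys. You instead chain the equivalences from Lemma~\ref{eq1} and Lemma~\ref{eq2}, writing $Q^R=G\circ S^R(\coprod_k A_k)$ with $G$ a quasi-inverse to the $F$ of Lemma~\ref{eq2}. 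This is a legitimate alternative, and makes the logical dependence on the preceding lemmas explicit, but it has a real soft spot: a quasi-inverse of an equivalence cannot in general be chosen strictly naturally, so your assertion that ``the quasi-inverse may be chosen naturally'' and that $Q^RS^R(\phi)=S(\xi)Q^R$ holds on the nose both need support rather than assertion. The repair is close at hand — $F$ is in fact given by an explicit formula with an explicit two-sided inverse (the proof of Lemma~\ref{eq2} defers to that of Theorem~\ref{iptm}, which produces $J_{(\underline{g},\sigma)}$ with $J\circ f=\mathrm{Id}$ and $f\circ J=\mathrm{Id}$), so $G=F^{-1}$ is canonical and strictly functorial in $X$; once you say this, your naturality and center-element verifications go through. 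The paper's adjunction-based construction has the advantage that these two properties are visible directly from the formula for $Q^R$ without any inversion step.
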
 
\begin{proof}Let $I$ be the inclusion $$\mbox{pt}/\!\!/\mathbb{R}\times_{1\sim \phi}\Phi(X)\longrightarrow \mbox{pt}/\!\!/\mathbb{R}\times_{1\sim S(\xi)}
I(S(X)).$$ Let $\epsilon$ be the counit of the adjunction
$(S,\ast, (\mbox{  }))\dashv \mbox{forget}$. Let $Q$ denote the
composition
$$S(\mbox{pt}/\!\!/\mathbb{R}\times_{1\sim \phi}\Phi(X))\buildrel{S(I)}\over\longrightarrow S(\mbox{pt}/\!\!/\mathbb{R}\times_{1\sim S(\xi)}
I(S(X)))\buildrel{\epsilon}\over\longrightarrow\mbox{pt}/\!\!/\mathbb{R}\times_{1\sim
S(\xi)} I(S(X)).$$ Let $Q^R$ be the restriction of $Q$ to the
subgroupoid $S^R(\mbox{pt}/\!\!/\mathbb{R}\times_{1\sim
\phi}\Phi(X))$, i.e. the composition \begin{align*}Q^R:
S^R(\mbox{pt}/\!\!/\mathbb{R}\times_{1\sim
\phi}\Phi(X))&\buildrel{S^R(I)}\over\longrightarrow
S^R(\mbox{pt}/\!\!/\mathbb{R}\times_{1\sim S(\xi)} I(S(X)))\\
&\buildrel{\mbox{restriction of
}\epsilon}\over\longrightarrow\mbox{pt}/\!\!/\mathbb{R}\times_{1\sim
S(\xi)} I(S(X)).\end{align*}

The essential image of $I$ consists exactly of the indecomposable
objects of $\mbox{pt}/\!\!/\mathbb{R}\times_{1\sim S(\xi)}
I(S(X))$, thus, both $Q$ and $Q^R$ are essentially surjective.


$Q$ is not fully faithful but $Q^R$ is. This is why we need the
product $S^R$ instead of $S$.

\end{proof}


\subsection{Power Operation for orbifold quasi-elliptic
cohomology}\label{s2}

In this section we construct the total power operation for the
orbifold quasi-elliptic cohomology
$$P^{Ell}: QEll(X)\longrightarrow QEll(SX)$$ in (\ref{main2porb}), which satisfy the axioms that Ganter
formulated in Definition 3.9, \cite{Gan13} for power operations
for orbifold theories. The power operation we constructed in
Section \ref{s2complete} is a special case of it for $G-$spaces.

\begin{example}\label{atiyahell} We can construct Atiyah's power
operation for orbifold quasi-elliptic cohomology.

Let $V$ be an orbifold vector bundle over the orbifold
$$\mbox{pt}/\!\!/\mathbb{R}\times_{1\sim \xi}I(X),$$ thus, $V$
represents an element in $QEll(X)$. Then $$P_n(V):=
V^{\otimes_{\mathbb{Z}[q^{\pm}]} n}$$ is an orbifold vector bundle
over
$$S^R(\mbox{pt}/\!\!/\mathbb{R}\times_{1\sim \xi}I(X))\cong \mbox{pt}/\!\!/\mathbb{R}\times_{1\sim \xi}SI(X).$$ So $P_n(V)$
is in $QEll^*(S(X))$.

$P=(P_n)_{n\geq 0}$ satisfies the axioms of a total power
operation. \label{apel}\end{example}


Before the construction of the power operation of $QEll$, we
introduce several maps necessary for the construction of the power
operation.

Let $X$ be an orbifold groupoid 
and $k\geq 1$ an integer. We define
the map
\begin{align}
s_k:
K_{orb}(\mbox{pt}/\!\!/\mathbb{R}\times_{1\sim\xi}I(X))&\longrightarrow
K_{orb}(\mbox{pt}/\!\!/\mathbb{R}\times_{k\sim\xi}I(X))\\
[\sum V_aq^a]&\mapsto[\sum V_a q^{\frac{a}{k}}]\end{align} and
\begin{equation}\coprod\limits_k s_k:
K_{orb}(\mbox{pt}/\!\!/\mathbb{R}\times_{1\sim\xi}I(X))\longrightarrow
K_{orb}(\coprod\limits_k(\mbox{pt}/\!\!/\mathbb{R}\times_{k\sim\xi}I(X))).\end{equation}

The functor $$(\mbox{  })_k: \Lambda_{(\underline{g},
\sigma)}(X)\longrightarrow \Lambda^1_{(\underline{g},
\sigma)}(X)$$ defined in (\ref{lpok})  
is a special local case of $s_k$ when $X$ is a $G-$space and
$(\underline{g}, \sigma)$ is fixed.

\bigskip

Let $\theta: QEll(X)\longrightarrow
K_{orb}(\mbox{pt}/\!\!/\mathbb{R}\times_{1\sim\phi}\Phi(X))$    
 be the additive operation whose $k-$th component is
$A_k^*\circ s_k$, where $A_k$ is the equivalence defined in Lemma
\ref{eq1}.

\bigskip

Now we are ready to define the total power operation $P^{Ell}$ of
$QEll^*$ as the composition below:

\begin{equation}\xymatrix{ QEll(X)\ar[r]^>>>>>{\theta}
&K_{orb}(\mbox{pt}/\!\!/\mathbb{R}\times_{1\sim\phi}\Phi(X))\ar[r]^>>>>>{P}
&K_{orb}(S^R(\mbox{pt}/\!\!/\mathbb{R}\times_{1\sim
\phi}\Phi(X)))\ar[d]^>>>>>>{(Q^{R *})^{-1}} \\ &&QEll(SX).}
\label{main2porb}\end{equation}

\begin{theorem}$P^{Ell}$ satisfies the axioms of a total power
operation in Definition 3.9 \cite{Gan13}.\end{theorem}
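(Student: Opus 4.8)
The plan is to verify the two conditions of Definition~\ref{ng1} for the composite $P^{Ell}=(Q^{R*})^{-1}\circ P\circ\theta$, after first checking that $P^{Ell}$ is a natural transformation $QEll\Rightarrow QEll\circ S$. Naturality is immediate: the maps $s_k$ are natural in $X$ since they only relabel the exponents of $q$; the equivalences $A_k$ of Lemma~\ref{eq1} and $Q^R$ of Lemma~\ref{eq3} are natural in $X$; and Atiyah's power operation $P$ is natural by Example~\ref{atiyahell}. Hence $\theta$, and therefore $P^{Ell}$, is natural.

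For \textbf{exponentiality} I would trace the three maps. If $X=\emptyset$, then $\Phi(\emptyset)=\emptyset$ by additivity of $\Phi$, so $K_{orb}(\mbox{pt}/\!\!/\mathbb{R}\times_{1\sim\phi}\Phi(\emptyset))=0$ and $\theta(0)=0$; Atiyah's operation sends $0$ to the empty external tensor product, i.e.\ the unit $1\in K_{orb}(S^R(\emptyset))=K_{orb}(\mbox{pt})=\mathbb{Z}$; and $Q^R$, being an equivalence of groupoids, induces a ring isomorphism, hence preserves $1$. Since $S\emptyset=\mbox{pt}$ this yields $P^{Ell}(0)=1$ in $QEll(S\emptyset)=QEll(\mbox{pt})$. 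For $X\sqcup Y$, additivity of $\theta$ (each component $A_k^*\circ s_k$ is additive) together with $\Phi(X\sqcup Y)=\Phi(X)\sqcup\Phi(Y)$ identifies $\theta_{X\sqcup Y}$ with $\theta_X\oplus\theta_Y$ under the splitting of $K_{orb}$ along the disjoint union; the construction $S^R$ carries disjoint unions to $\boxtimes$-products just as $S$ carries them to products, so by Example~\ref{ng2} and Example~\ref{atiyahell} the Atiyah operation sends a pair $(u,v)$ to $P(u)\boxtimes P(v)$; and $Q^R$ is compatible with these product decompositions by naturality. Composing, $P^{Ell}$ sends $(a,b)$ to $P^{Ell}(a)\boxtimes P^{Ell}(b)$.

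For the \textbf{comodule property} I would reduce to Theorem~\ref{main1p}. Because $QEll$ is contravariant, $(-)\circ S$ is a comonad whose counit is induced by the unit $X=X\wr\Sigma_1\hookrightarrow SX$ of the monad $S$ and whose comultiplication is induced by the concatenation $\mu\colon S\circ S\Rightarrow S$. Pulling $P^{Ell}$ back along the counit gives the $n=1$ component of $\mathbb{P}$, which is the identity because for $n=1$ every cycle is a $1$-cycle, and then the rescaling $(-)_1$, the equivalence $A_1$, and the restriction of $Q^R$ to the $n=1$ summand are all identities; this is axiom~(i) of Theorem~\ref{main1p} in orbifold form. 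For coassociativity I would unwind $P^{Ell}$ using the explicit formulas for $A_k$ (Lemma~\ref{eq1}) and $Q^R$ (Lemma~\ref{eq3}, built from the isomorphism of Theorem~\ref{iptm}), which transports the diagram to a statement about honest translation groupoids and external products of bundles. In the $S^R(\mbox{pt}/\!\!/\mathbb{R}\times_{1\sim\phi}\Phi(-))$ picture the operation is simply $V\mapsto V^{\boxtimes n}$, whose iteration is compatible with concatenation exactly as Atiyah's power operation satisfies the comodule axiom (Example~\ref{ng2}); the only extra bookkeeping is that the rescalings compose correctly, an $r$-cycle inside a $k$-cycle producing a $kr$-cycle along which $(-)_{kr}=((-)_k)_r$ and the twisted-loop-group label is the element $H_{\underline i\underline j}$ of the proof of Theorem~\ref{main1p}(iii). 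Transporting back along Lemmas~\ref{eq1},~\ref{eq2} and~\ref{eq3}, the coassociativity square for $P^{Ell}$ becomes precisely the composition axiom~(iii) of that theorem, glued over all orbifold components.

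The main obstacle is this last step: identifying the iterated construction with the single one along the comultiplication and checking that the functors $(-)_k$, $A_k$ and $Q^R$ all intertwine with $S$ compatibly. This is essentially the content and the difficulty of Theorem~\ref{main1p}(iii); once one observes that the argument there carries over with the global-quotient translation groupoids replaced by $I(X)$, $\Phi(X)$ and the $S^R(\cdots)$ groupoids via Lemmas~\ref{eq1}--\ref{eq3}, the remaining verifications are routine.
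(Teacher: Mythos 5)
Your proof follows the paper's strategy: exponentiality is derived from the additivity of $\theta$ together with the exponentiality of Atiyah's operation $P$ of Example~\ref{atiyahell}, while naturality and the comodule property are obtained by tracing through the constituent functors $s_k$, $A_k$, $P$ and $Q^R$. You treat the comodule axiom considerably more carefully than the paper, which merely asserts it is clear from the definition; your reduction to Theorem~\ref{main1p}(iii) via Lemmas~\ref{eq1}--\ref{eq3} and the rescaling identity $(-)_{kr}=((-)_k)_r$ is sound, though, like the paper, you ultimately leave the final bookkeeping as routine.
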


\begin{proof}
From the definition of $P^{Ell}$, we can see it is a well-defined
natural transformation $QEll\Rightarrow QEll\circ S$ and is a
comodule over the comonad $(-)\circ S$.

In addition, the functor $\theta$ has the property of additivity
\begin{align*}\theta: QEll(X\sqcup Y)&\longrightarrow QEll(\Phi(X)\sqcup\Phi(Y))\\ (a, b)&\mapsto (\theta(a), \theta(b)). \end{align*}
The power operation $P$ defined in Example \ref{atiyahell} has the
exponential property. Therefore, $P^{Ell}$ has the exponential
property. So $P^{Ell}$ is a total power operation.
\end{proof}

\begin{remark}
Let $X/\!\!/G$ be a quotient orbifold. The power operation we
construct in Section \ref{111} for quotient orbifolds is in fact
the one below.

$\mathbb{P}: QEll^*(X/\!\!/G)\buildrel{\coprod\limits_k
s_k}\over\longrightarrow K^*_{orb}(\coprod\limits_k
\mbox{pt}/\!\!/\mathbb{R}\times_{1\sim
\xi^{\frac{1}{k}}}I(X/\!\!/G)[\xi^{\frac{1}{k}}])\buildrel{P}\over\longrightarrow
\\ K^*_{orb}(S^R(\coprod\limits_k
\mbox{pt}/\!\!/\mathbb{R}\times_{1\sim
\xi^{\frac{1}{k}}}I(X/\!\!/G)[\xi^{\frac{1}{k}}]))\buildrel{J^*}\over\longrightarrow
QEll^*(S(X/\!\!/G))$ where $J$ is constructed from the functors
$J_{(\underline{g}, \sigma)}$ in the proof of Theorem \ref{iptm}.

For global quotient orbifolds, $P^{Ell}$ and $\mathbb{P}$ are the
same up to isomorphism. The diagram

$$\xymatrix{QEll^*(X/\!\!/G)\ar[d]^{\theta} &QEll^*(S(X/\!\!/G)) \\ K_{orb}(\mbox{pt}/\!\!/\mathbb{R}\underset{1\sim\phi}\times
I(\Phi(X/\!\!/G)))\ar[r]^{P}
&K_{orb}(S^R(\mbox{pt}/\!\!/\mathbb{R}\underset{1\sim
\phi}\times\Phi(X/\!\!/G)))\ar[u]^>>>>>{(Q^{R*})^{-1}}
\\
 K_{orb}(\coprod\limits_k \mbox{pt}/\!\!/\mathbb{R}\!\!\underset{1\sim
\xi^{\frac{1}{k}}}\times\!\!
I(X/\!\!/G)[\xi^{\frac{1}{k}}])\ar[r]^>>>>>{P}\ar[u]^{\coprod\limits_k
A_k^*} &K_{orb}(S^R(\coprod\limits_k
\mbox{pt}/\!\!/\mathbb{R}\!\!\underset{1\sim
\xi^{\frac{1}{k}}}\times\!\!
I(X/\!\!/G)[\xi^{\frac{1}{k}}]))\ar[u]^{S^R(\coprod\limits_k
A_k^*)} }$$ commutes. The vertical maps $\coprod\limits_k A_k^*$
and $S^R(\coprod\limits_k A_k^*)$ are both equivalences of
groupoids. The horizontal maps are the power operation defined in
Example \ref{atiyahell}.

\end{remark}

\section{Finite subgroups of the Tate curve}
\label{stricklandtorsion}

Strickland showed in \cite{Str98} that the quotient of the Morava
E-theory of the symmetric group by a certain transfer ideal can be
identified with the product of rings $\prod\limits_{k\geq 0}R_k$
where each $R_k$ classifies subgroup-schemes of degree $p^k$  in
the  formal group associated
to $E^0\mathbb{C}P^{\infty}$.  
In this section we prove similar conclusions for Tate K-theory and
quasi-elliptic cohomology. The main conclusion for Section
\ref{stricklandtorsion} is Theorem \ref{stricktheorem}.

\subsection{Background}\label{tatecurve}

In this section we introduce the Tate curve and its finite
subgroups. The main references are Section 2.6, \cite{AHS} and
Section 8.7, 8.8, \cite{KM85}.

An elliptic curve over the complex numbers $\mathbb{C}$ is a
connected Riemann surface, i.e. a connected compact 1-dimensional
complex manifold, of genus 1. By the uniformization theorem every
elliptic curve over $\mathbb{C}$ is analytically isomorphic to a
1-dimensional complex torus, and can be expressed as
$$\mathbb{C}^*/q^{\mathbb{Z}}$$ with $q\in\mathbb{C}$ and
$0<|q|<1$, where $\mathbb{C}^*$ is the multiplicative group
$\mathbb{C}\backslash\{0\}.$

The Tate curve  $Tate(q)$  is the elliptic curve $$E_q:
y^2+xy=x^3+a_4x+a_6$$ whose coefficients are given by the formal
power series in $\mathbb{Z}((q))$
$$a_4=-5\sum_{n\geqslant 1} n^3q^n/(1-q^n)   \,\,\,\,\,\,\,\,\,\,\,\,\,\,\,   a_6=-\frac{1}{12}\sum_{n\geqslant 1}(7n^5+5n^3)q^n/(1-q^n).$$



Before we talk about the torsion part of $Tate(q)$, we recall a
smooth one-dimensional commutative group scheme $T$ over
$\mathbb{Z}[q^{\pm}]$. It sits in a short exact sequence of
group-schemes over $\mathbb{Z}[q^{\pm}]$
$$0\longrightarrow \mathbb{G}_m\longrightarrow T\longrightarrow \mathbb{Q}/\mathbb{Z}\longrightarrow 0.$$

 The $N-$torsion points
$T[N]$ of it is the disjoint union of $N$ schemes $T_0[N]$,
$\cdots$ $T_{N-1}[N]$, where
$$T_i[N]=\mbox{Spec}(\mathbb{Z}[q^{\pm}][x]/(x^N-q^i)).$$ It fits
into a short exact sequence $$0\longrightarrow
\mu_N\buildrel{a_N}\over\longrightarrow
T[N]\buildrel{b_N}\over\longrightarrow
\mathbb{Z}/N\mathbb{Z}\longrightarrow 0,$$ 
The canonical extension structure on $T(N)$ is compatible with an
alternating paring of $\mathbb{Z}[q^{\pm}]-$group schemes $e_N:
T(N)\times T(N)\longrightarrow \mu_N$ in the sense that
$$e_N(a_N(x), y)= x^{b_N(y)}\mbox{,   for any  }\mathbb{Z}[q^{\pm}]-\mbox{algebra } R\mbox{   and  any }x\in\mu_N(R).$$

We have the conclusion below, which is Theorem 8.7.5, \cite{KM85}.

\begin{theorem}
There exists a faithfully flat $\mathbb{Z}[q^{\pm}]-$algebra $R$,
an elliptic curve $E/R$, and an isomorphism of ind-group-schemes
over $R$ $$T_{torsion}\otimes_{\mathbb{Z}[q^{\pm}]}
R\buildrel\sim\over\longrightarrow E_{tors},$$ such that for every
$N\geq 1$, the isomorphism on $N-$division points $T[N]\otimes
R\buildrel\sim\over\longrightarrow E[N]$ is compatible with
$e_N-$pairings. \label{torsionT}
\end{theorem}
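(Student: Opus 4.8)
The plan is to recognize the statement as (essentially) Theorem 8.7.5 of \cite{KM85} and to reproduce the shape of its argument. Recall that $T$ is the smooth one-dimensional group scheme over $\mathbb{Z}[q^{\pm}]$ in the extension $0\to\mathbb{G}_m\to T\to\mathbb{Q}/\mathbb{Z}\to 0$, built as the smooth locus of a N\'eron polygon: take a doubly-infinite chain of copies of $\mathbb{P}^1$ (the ``$\infty$-gon'', whose smooth locus is $\mathbb{G}_m\times\mathbb{Z}$) and divide by the translation that shifts the chain and multiplies the $\mathbb{G}_m$-coordinate by $q$. Its division points are $T[N]=\coprod_{i=0}^{N-1}\mathrm{Spec}\,\mathbb{Z}[q^{\pm}][x]/(x^N-q^i)$, with transition maps $T[NM]\to T[N]$ induced by $x\mapsto x^M$ on coordinates. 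On the other side, $\mathrm{Tate}(q)$ is the Weierstrass curve displayed above over $\mathbb{Z}((q))$, which Katz--Mazur identify with the smooth locus of the same kind of polygon after completing at $q=0$; write $E$ for its base change to the faithfully flat ring $R$ of the statement. First I would set both sides up as generalized elliptic curves and record their $N$-division schemes explicitly.

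The comparison isomorphism is then built from the classical uniformization $\mathbb{G}_m/q^{\mathbb{Z}}$. An $N$-torsion point of $\mathbb{G}_m/q^{\mathbb{Z}}$ is represented by an element $z$ with $z^N=q^i$ for a unique $i\in\{0,\dots,N-1\}$, the residue $i$ giving the image in $\mathbb{Z}/N\mathbb{Z}$ and the choice of $z$ with $z^N=q^i$ being exactly a point of the $i$-th component $\mathrm{Spec}\,\mathbb{Z}[q^{\pm}][x]/(x^N-q^i)$ of $T[N]$. Thus over $R$ (large enough that the uniformization algebraizes on $N$-division points for all $N$; as in Katz--Mazur this is arranged by completing and adjoining a compatible system of roots of $q$) the rule $x\mapsto z=x$ defines an isomorphism $T[N]\otimes R\buildrel{\sim}\over\longrightarrow E[N]$. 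Compatibility with the transition maps $x\mapsto x^M$ is immediate, so these assemble to an isomorphism $T_{torsion}\otimes_{\mathbb{Z}[q^{\pm}]}R\buildrel{\sim}\over\longrightarrow E_{tors}$ of ind-group-schemes, natural in the evident sense. That it is a homomorphism I would check by matching the group laws: addition of the exponents $i$ on the $\mathbb{Z}/N\mathbb{Z}$-quotient and multiplication on the $\mu_N$-kernel both transport correctly under the Weierstrass parametrization of $\mathbb{G}_m/q^{\mathbb{Z}}$.

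The final step is the Weil-pairing compatibility: one computes $e_N^{\mathrm{Tate}}(z_1,z_2)=\zeta_1^{\,i_2}\zeta_2^{-i_1}$ (with the usual sign normalization) where $z_t=\zeta_t q^{i_t/N}$, and checks this matches the pairing $e_N$ on $T[N]$ pinned down by bilinearity, alternating-ness, and $e_N(a_N(x),y)=x^{b_N(y)}$. The \textbf{main obstacle} is not this bookkeeping but rather (i) producing $T$ as an honest \emph{smooth} group scheme over all of $\mathbb{Z}[q^{\pm}]$ --- this is the N\'eron-polygon / generalized-elliptic-curve theory and needs care at the cusp $q=0$ and in mixed characteristic --- and (ii) identifying the faithfully flat $R$ over which the comparison is defined and verifying that the resulting isomorphism is canonical in the appropriate sense (independent of the auxiliary roots of $q$). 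Both of these are precisely what \cite{KM85} establishes, so in practice the proof here is to invoke their Theorem 8.7.5 rather than to reprove it from scratch.
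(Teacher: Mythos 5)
The paper offers no proof of this statement at all: it is quoted as Theorem 8.7.5 of \cite{KM85}, which the text explicitly flags just before the theorem. Your proposal correctly recognizes this and rightly concludes by invoking the Katz--Mazur result, so it matches the paper's treatment; your sketch of the underlying N\'eron-polygon and $\mathbb{G}_m/q^{\mathbb{Z}}$ uniformization is a reasonable outline of what that reference actually does, but it is supplementary rather than required here.
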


Thus, we have the unique isomorphism of ind-group-schemes on
$\mathbb{Z}((q))$ $$T_{torsion}\otimes_{\mathbb{Z}[q^{\pm}]}
\mathbb{Z}((q))\buildrel\sim\over\longrightarrow Tate(q)_{tors}.$$
The isomorphism is compatible with the canonical extension
structure: for each $N\geq 1$, $$\xymatrix{0 \ar[r]
&\mu_N\ar[d]_{=}\ar[r]
&T[N] \ar[r]\ar[d]_{\cong} &\mathbb{Z}/N\mathbb{Z} \ar[r] \ar[d]_= &0\\
0\ar[r] &\mu_N \ar[r] &Tate(q)[N] \ar[r]
&\mathbb{Z}/N\mathbb{Z}\ar[r] &0}$$

Therefore, $Tate(q)[N]$  
is isomorphic to the disjoint union
$$\coprod\limits_{k=0}^{N-1}\mbox{Spec}(\mathbb{Z}((q))[x]/(x^N-q^k)).$$

In addition, we have the question how to classify all the finite
subgroups of $Tate(q)$.  As shown in Proposition 6.5.1,
\cite{KM85}, the ring $O_{Sub_n}$ that classifies subgroups of
$Tate(q)$ of order $n$ exists. To give a description of it, first
we describe the isogenies for the analytic Tate curve over
$\mathbb{C}$.

Let $(d, e)$ be a pair of positive integers such that $N=de$ and
$q'$ a nonzero complex number such that $q^d=q'^e$. The map
\begin{align*}\psi_d: \mathbb{C}^*/q^{\mathbb{Z}}&\longrightarrow \mathbb{C}^*/q'^{\mathbb{Z}}\\x&\mapsto x^d\end{align*}
 is well-defined since $\psi_d(q^{\mathbb{Z}})\subseteq
q'^{\mathbb{Z}}$. The kernel of $\psi_d$ is
$$\{\mu_d^nq^{\frac{m}{e}}q^{\mathbb{Z}}| n, m\in\mathbb{Z}\}$$ where $\mu_d$ is a $d-$th primitive root of 1 and $q^{\frac{1}{e}}$ is a $e-$th primitive root of
$q$. Its order is $N$. In fact
$$\{\mbox{Ker}\psi_d| \mbox{   } d \mbox{  divides }N\mbox{  and
}d\geq 1\}$$ gives all the subgroups of
$\mathbb{C}^*/q^{\mathbb{Z}}$ of order $N$.

\begin{proposition}For each pair of number $(d, e)$, there exists an isogeny
$$\Psi_{d, e}: Tate((q))\longrightarrow Tate((q'))$$ of the elliptic curves over $O_{Sub_n}$ such that
its kernel is the universal subgroup.  \end{proposition}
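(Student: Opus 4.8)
The plan is to realize $\Psi_{d,e}$ as a composite of two classical isogenies of Tate curves, base-changed to the appropriate ring. Set $n=de$ and write $R_{d,e}:=\mathbb{Z}((q))[q']/\langle q^{d}-q'^{\,e}\rangle$, which is free of rank $e$ over $\mathbb{Z}((q))$, hence flat; note $q'$ is automatically a unit in $R_{d,e}$, with $q'^{-1}=q^{-d}q'^{\,e-1}$, so $Tate(q')$ has unit discriminant and is a genuine elliptic curve over $R_{d,e}$, as is $Tate(q)\times_{\mathbb{Z}((q))}R_{d,e}$, and $Tate(q^{d})$ and $Tate(q'^{\,e})$ coincide over $R_{d,e}$. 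Since $O_{Sub_n}$ is the product of the rings $R_{d,n/d}$ over divisors $d$ of $n$, by the analytic classification of order-$n$ subgroups recalled above (this matches the rank count $\sum_{d\mid n}n/d=\sum_{d\mid n}d$ of the number of such subgroups over $\mathbb{C}$), it suffices to produce $\Psi_{d,e}$ on each factor $R_{d,e}$ and assemble.

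First I would recall from Katz--Mazur, \S\S 8.7--8.8 of \cite{KM85}, the degree-$d$ isogeny $\varphi_{d}\colon Tate(q)\to Tate(q^{d})$ whose kernel is the finite flat subgroup scheme $\mu_{d}\subseteq Tate(q)$ occurring in the canonical extension structure $0\to\mu_{d}\to Tate(q)[d]\to\mathbb{Z}/d\mathbb{Z}\to 0$; on the analytic model it is $x\mapsto x^{d}$ from $\mathbb{C}^{*}/q^{\mathbb{Z}}$ to $\mathbb{C}^{*}/q^{d\mathbb{Z}}$. Next I would recall, over $\mathbb{Z}((q'))$, the cyclic $e$-isogeny $\pi_{e}\colon Tate(q'^{\,e})\to Tate(q')$ whose kernel is the \'etale order-$e$ subgroup generated by the canonical torsion point attached to $q'\in\mathbb{C}^{*}$ (a section of the projection $b_{e}$ to $\mathbb{Z}/e\mathbb{Z}$); on the analytic model it is the quotient $\mathbb{C}^{*}/q'^{\,e\mathbb{Z}}\to\mathbb{C}^{*}/q'^{\mathbb{Z}}$. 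Both isogenies exist over the stated bases because Tate's explicit uniformization exhibits the maps on Weierstrass coordinates by power series with coefficients in $\mathbb{Z}((q))$, resp.\ $\mathbb{Z}((q'))$; equivalently, one writes the maps down analytically and invokes the $q$-expansion principle to conclude that they are algebraic.

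Then I would define $\Psi_{d,e}:=\pi_{e}\circ\varphi_{d}$, base-changed to $R_{d,e}$ and using $q^{d}=q'^{\,e}$ to identify the intermediate curve $Tate(q^{d})=Tate(q'^{\,e})$. Its kernel $C$ is a finite flat subgroup scheme of $Tate(q)\times R_{d,e}$ of order $de=n$, fitting in $0\to\mu_{d}\to C\to\mathbb{Z}/e\mathbb{Z}\to 0$; on the analytic model $C$ is exactly $\mathrm{Ker}(\psi_{d})$ from the discussion preceding the statement. Finally, to identify $C$ with the universal subgroup: by Proposition 6.5.1 of \cite{KM85} the functor of order-$n$ subgroups of $Tate(q)$ is represented by an affine scheme $Sub_{n}$, and the classification of the previous paragraph identifies $O_{Sub_n}\cong\prod_{d\mid n}R_{d,n/d}$ so that the tautological subgroup restricted to the factor $R_{d,e}$ (with $e=n/d$) is precisely $C$; hence the classifying map of $C$ is the identity, $\Psi_{d,e}$ is the universal isogeny over that component, its kernel is the universal subgroup, and its target is the elliptic curve $Tate(q')$. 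Assembling over all divisors $d\mid n$ gives the statement over $O_{Sub_n}$.

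The step I expect to be the real obstacle is the algebraization: passing from the transcendental description $x\mapsto x^{d}$ on $\mathbb{C}^{*}/q^{\mathbb{Z}}$ to an honest morphism of Weierstrass models over the non-field ring $R_{d,e}$, and checking that its kernel is finite locally free of rank $n$ over that base. This is precisely the content of the explicit Tate-curve computations in \cite{KM85}, \S\S 8.7--8.8, together with Proposition 6.5.1 there (alternatively, one can transport everything through the isomorphism of Theorem \ref{torsionT} with the very explicit group scheme $T$ over $\mathbb{Z}[q^{\pm}]$); so the proof reduces to quoting and assembling those results, the only genuinely new work being the bookkeeping with the components $R_{d,e}$.
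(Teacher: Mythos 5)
The paper itself offers no proof of this proposition: it is stated as a recollection, preceded only by the transcendental model $\psi_d\colon \mathbb{C}^*/q^{\mathbb{Z}}\to\mathbb{C}^*/q'^{\mathbb{Z}}$, $x\mapsto x^d$, and followed by the bare assertion $O_{Sub_n}\otimes\mathbb{C}=\prod_{N=de}\mathbb{C}((q))[q']/\langle q^d-q'^e\rangle$, with everything delegated to Sections 8.7--8.8 and Proposition 6.5.1 of \cite{KM85}. Your proposal therefore supplies more than the paper does. The two-step factorization $\Psi_{d,e}=\pi_e\circ\varphi_d\colon Tate(q)\to Tate(q^d)=Tate(q'^e)\to Tate(q')$, with $\varphi_d$ the $\mu_d$-quotient and $\pi_e$ the \'etale $e$-quotient, is exactly the algebraic form of $\psi_d$; your kernel $C$ sitting in $0\to\mu_d\to C\to\mathbb{Z}/e\mathbb{Z}\to 0$ matches the paper's explicit $\mathrm{Ker}\,\psi_d=\{\mu_d^n q^{m/e}q^{\mathbb{Z}}\}$; and the references you point to are the correct ones. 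In spirit this is precisely the argument the paper expects the reader to extract from \cite{KM85}.

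One caveat worth making explicit: you take $O_{Sub_n}\cong\prod_{d\mid n}\mathbb{Z}((q))[q']/\langle q^d-q'^{n/d}\rangle$ as $\mathbb{Z}((q))$-algebras in order to split the construction across the factors $R_{d,e}$, but the paper only records this after $\otimes\,\mathbb{C}$. Integrally, $\mathbb{Z}((q))$ has points of positive residue characteristic (where $\mu_d$ is no longer \'etale when the characteristic divides $d$), so the component decomposition of $Sub_n$ is not a formal consequence of the complex picture; it is itself one of the facts you must quote from \cite{KM85} (via the comparison with the explicit group scheme $T$ of Theorem~\ref{torsionT}), not something to assume. Since you already cite that material for the algebraization and finiteness of the kernel, you should name this identification as one of the borrowed ingredients as well. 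With that attribution in place, the base change of the two classical isogenies to each $R_{d,e}$ and the match of the kernel with the tautological subgroup go through as you describe, and the assembled $\Psi_{d,e}$ over $O_{Sub_n}$ is the asserted universal isogeny.
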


We have
$$O_{Sub_n}\otimes\mathbb{C}=\prod_{N=de}\mathbb{C}((q))[q']/\langle
q^d-q'^e \rangle.$$ Moreover, we have  the conclusion below.

\begin{proposition} The finite subgroups of the
Tate curve are the kernels of isogenies. \end{proposition}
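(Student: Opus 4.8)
The plan is to deduce the proposition from the moduli description already in place: the ring $O_{Sub_N}$ of Proposition~6.5.1 of \cite{KM85} corepresenting order-$N$ subgroups of $Tate(q)$, its computation, and the isogenies $\Psi_{d,e}$ constructed in the preceding Proposition. First I would reduce to a fixed order $N$, noting that any finite subgroup $H$ of order $N$ of $Tate(q)$ over a $\mathbb{Z}((q))$-algebra $R$ lies in the $N$-division points $Tate(q)[N]$, which by Theorem~\ref{torsionT} sits in the canonical extension
\[
0\longrightarrow \mu_N\longrightarrow Tate(q)[N]\longrightarrow \mathbb{Z}/N\mathbb{Z}\longrightarrow 0.
\]
Corepresentability means $H$ is classified by a unique homomorphism $\rho_H: O_{Sub_N}\to R$ and is the pullback of the universal order-$N$ subgroup along $\rho_H$. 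The structural input I would use is that $O_{Sub_N}$, integrally, is the product $\prod_{d\mid N}\mathbb{Z}((q))[q']/\langle q^{d}-q'^{N/d}\rangle$ (the $\otimes\mathbb{C}$ version is recorded in Section~\ref{tatecurve}, and the integral version is the content underlying \eqref{tatec}); hence $\mathrm{Spec}\,O_{Sub_N}$ is a finite disjoint union indexed by the factorizations $N=de$, and $\rho_H$ induces compatible decompositions $R\cong\prod_{d\mid N}R_{d}$ and $H\cong\coprod_{d\mid N}H_{d}$.

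Next I would invoke the preceding Proposition: over the component $\mathrm{Spec}\big(\mathbb{Z}((q))[q']/\langle q^{d}-q'^{N/d}\rangle\big)$ the restriction of the universal subgroup is precisely $\mathrm{Ker}\,\Psi_{d,N/d}$, the kernel of $\Psi_{d,N/d}: Tate((q))\to Tate((q'))$ (equivalently, the kernel of the analytic isogeny $\psi_d(x)=x^d$). Pulling back along $R_d$ shows each $H_d$ is the kernel of the base change of $\Psi_{d,N/d}$ over $R_d$, and assembling these over the decomposition of $\mathrm{Spec}\,R$ exhibits $H$ as the kernel of a single isogeny $Tate(q)_R\to\coprod_{d\mid N}Tate(q')_{R_d}$. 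For the reverse inclusion one notes that every isogeny out of $Tate(q)$ has finite kernel — and, by the preceding Proposition, restricted to each component it is one of the $\Psi_{d,e}$ up to an isomorphism of the target — so the two classes of subgroups coincide, giving the stated equality.

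The step I expect to be the real obstacle is the identification, on each component, of the universal order-$N$ subgroup with $\mathrm{Ker}\,\Psi_{d,e}$ rather than merely with some subgroup of the same ``type'' $(d,e)$ under the filtration above; this is a rigidity statement — uniqueness of a finite subgroup of $Tate(q)[N]$ of prescribed type over a fixed base — and it rests on the canonicity of the extension, i.e. on the $e_N$-compatibility in Theorem~\ref{torsionT}. The remaining points are more routine: upgrading the computation of $O_{Sub_N}$ from the $\otimes\mathbb{C}$ form to the integral one over $\mathbb{Z}((q))$ (which comes out of the same computation giving \eqref{tatec}), and the bookkeeping that a disjoint union of kernels of isogenies over a decomposition $\mathrm{Spec}\,R=\coprod_d\mathrm{Spec}\,R_d$ is again the kernel of an isogeny with (possibly disconnected) target.
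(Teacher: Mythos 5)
The paper gives no proof of this Proposition; it is stated, together with the preceding Proposition about $\Psi_{d,e}$ and the ring $O_{Sub_n}$, as background drawn from Katz--Mazur \cite{KM85} (cf.\ Proposition~6.5.1 there), so there is nothing in the source to compare your argument against line by line.

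Your deduction is sound in outline, but you overcomplicate the core step. Granting the preceding Proposition --- which already asserts that over $O_{Sub_n}$ the universal order-$N$ subgroup \emph{is} $\ker\Psi_{d,e}$ on the component indexed by $(d,e)$ --- the forward direction is immediate: any finite subgroup $H\subset Tate(q)_R$ of order $N$ is the pullback of the universal subgroup along its classifying map $\rho_H\colon O_{Sub_N}\to R$, and the pullback of a kernel of an isogeny is the kernel of the base-changed isogeny. The ``rigidity obstacle'' you flag (that the universal subgroup might only have the right \emph{type} $(d,e)$ rather than literally equal $\ker\Psi_{d,e}$) is not an obstacle to this Proposition at all; it is precisely the content of the \emph{previous} Proposition, which you are entitled to cite. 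Worrying about it here conflates proving the present statement with proving the one it depends on.

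One further caution: in the reverse inclusion you write that every isogeny restricted to a component ``is one of the $\Psi_{d,e}$ up to isomorphism of the target.'' The preceding Proposition asserts the \emph{existence} of $\Psi_{d,e}$ with the universal kernel; it does not classify all isogenies out of $Tate(q)$. Fortunately you do not need such a classification: the reverse inclusion is trivial because, by definition, an isogeny of elliptic curves has finite (flat) kernel, hence $\ker$ of any isogeny out of $Tate(q)$ is a finite subgroup. Dropping the unnecessary appeal to the preceding Proposition in that direction, and dropping the rigidity digression in the forward direction, leaves a clean two-line argument that is correct and is presumably the intended (though unwritten) proof.
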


\subsection{Formulas for Induction} \label{inductionqell} 


Before the main conclusion,  we introduce the induction formula
for quasi-elliptic cohomology. The induction formula for Tate
K-theory is constructed in Section 2.3.3, \cite{Gan13}.

Let $H\subseteq G$ be an inclusion of finite groups and $X$ be a
$G-$space. Then we have the inclusion of the groupoids
$$j:X/\!\!/H\longrightarrow X/\!\!/G.$$

Let $a'=\prod\limits_{\sigma\in H_{conj}}a'_{\sigma}$ be an
element in $QEll_H(X)=\prod\limits_{\sigma\in
H_{conj}}K_{\Lambda_H(\sigma)}(X^{\sigma})$ where $\sigma$ goes
over all the conjugacy classes in $H$. The finite covering map
$$f': \Lambda(G\times_H X/\!\!/G)\longrightarrow  \Lambda(X/\!\!/G)$$  is defined by sending  an object $(\sigma, [g, x])$ to  $(\sigma, gx)$ and
a morphism  $([g', t], (\sigma, [g, x]))$ to $([g', t], (gx,
\sigma))$. The transfer of  quasi-elliptic cohomology
$$\mathcal{I}_H^G: QEll_H(X)\longrightarrow QEll_G(X)$$
is defined to be the composition \begin{equation}
QEll_H(X)\buildrel\cong\over\longrightarrow
QEll_G(G\times_HX)\longrightarrow
QEll_G(X)\label{qelltransfer}\end{equation} where the first map is
the change-of-group isomorphism and the second is the finite
covering.

Thus
$$\mathcal{I}^G_H(a')_{g}=\sum_{r}r\cdot a'_{r^{-1}gr}$$ where $r$ goes
over a set of representatives of $(G/H)^{g}$, in other words,
$r^{-1}gr$ goes over a set of representatives of conjugacy classes
in $H$ conjugate to $g$ in $G$.

\begin{equation}\mathcal{I}^G_H(a')_{g}=\begin{cases}Ind^{\Lambda_G}_{\Lambda_H}(a'_{g}) &\mbox{if  }g\mbox{ is conjuate to some element  }h \mbox{  in
}H;\\ 0 &\mbox{if there is no element conjugate to }g\mbox{  in
}H.
\end{cases}\end{equation}

\bigskip
There is another way to describe the transfer, which is shown in Rezk's unpublished work
\cite{Rez11} for quasi-elliptic cohomology. 
The transfer of Tate K-theory can be described similarly. 

\subsection{The main theorem}\label{proofstrict}

Theorem \ref{stricktheorem} gives a classification of finite
subgroups of the Tate curve and a similar conclusion for the
quasi-elliptic cohomology. We prove it in this section by
representation theory. We assume the readers are familiar with the
transfer ideal $I_{tr}$ of equivariant K-theory. References for
that include Chapter II, \cite{LMSM} and Section 1.8,
\cite{Rez06}.

Let $N$ be an integer. Analogous to the transfer ideal $I_{tr}$ of
equivariant K-theory,  we can define the transfer ideal for Tate
K-theory
\begin{equation}I^{Tate}_{tr}:= \sum_{\substack{i+j=N,\\
N>j>0}}\mbox{Image}[I^{\Sigma_N}_{\Sigma_i\times\Sigma_j}:
K_{Tate}(\mbox{pt}/\!\!/\Sigma_i\times\Sigma_j)\longrightarrow
K_{Tate}(\mbox{pt}/\!\!/\Sigma_N)]\label{transferidealtatek}\end{equation}
where $I^G_H$ is the transfer map of $K_{Tate}$ along
$H\hookrightarrow G$ defined in Proposition 2.23, \cite{Gan13},
and the transfer ideal for quasi-elliptic cohomology
\begin{equation}\mathcal{I}^{QEll}_{tr}:= \sum_{\substack{i+j=N,\\
N>j>0}}\mbox{Image}[\mathcal{I}^{\Sigma_N}_{\Sigma_i\times\Sigma_j}:
QEll(\mbox{pt}/\!\!/\Sigma_i\times\Sigma_j)\longrightarrow
QEll(\mbox{pt}/\!\!/\Sigma_N)]\label{transferidealqec}\end{equation}
with $\mathcal{I}^G_H$ the transfer map of $QEll$ along
$H\hookrightarrow G$ defined in (\ref{qelltransfer}).

\begin{theorem}\label{stricktheorem} The Tate K-theory of symmetric groups modulo the transfer
ideal $I^{Tate}_{tr}$ classifies the finite subgroups of the Tate
curve. Explicitly,
\begin{equation}(K_{Tate})_{\Sigma_N}(\mbox{pt})/I^{Tate}_{tr}\cong
\prod_{N=de}\mathbb{Z}((q))[q']/\langle q^d-q'^e
\rangle,\label{tatec}\end{equation} where  $q'$ is the image of
$q$ under the power operation $P^{Tate}$ constructed in Definition
3.15, \cite{Gan13}. The product goes over all the ordered pairs of
positive integers $(d, e)$ such that $N=de$.

\bigskip

We have the analogous conclusion for quasi-elliptic cohomology.
\begin{equation}QEll_{\Sigma_N}(\mbox{pt})/\mathcal{I}^{QEll}_{tr}\cong
\prod_{N=de}\mathbb{Z}[q^{\pm}][q']/\langle q^d-q'^e
\rangle,\label{ellc}\end{equation} where $q'$ is the image of $q$
under the power operation $\mathbb{P}_N$ constructed in Section
\ref{s2complete}. The product goes over all the ordered pairs of
positive integers $(d, e)$ such that $N=de$.

\end{theorem}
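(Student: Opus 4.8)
The plan is to prove the quasi-elliptic assertion (\ref{ellc}) directly and then to obtain the Tate assertion (\ref{tatec}) from it by flat base change along $\mathbb{Z}[q^{\pm}]\hookrightarrow\mathbb{Z}((q))$. For (\ref{ellc}) I would start from the conjugacy-class splitting
\[
QEll(\mbox{pt}/\!\!/\Sigma_{N})=\prod_{\lambda\vdash N}R\Lambda_{\Sigma_{N}}(\sigma_{\lambda}),
\]
the product over partitions $\lambda=\prod_{k}k^{a_{k}}$ of $N$, where $\Lambda_{\Sigma_{N}}(\sigma_{\lambda})$ is a fibred product over $\mathbb{T}$ of the wreath products $\Lambda^{k}_{\mathbb{Z}/k\mathbb{Z}}(g_{k})\wr_{\mathbb{T}}\Sigma_{a_{k}}$ ($g_{k}$ a generator of $\mathbb{Z}/k\mathbb{Z}$), so each factor is computed by Example~\ref{ppex} and Theorem~\ref{repfibwr}. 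Using the induction formula of Section~\ref{inductionqell} together with Lemma~\ref{induequ}, one sees that $\mathcal{I}^{\Sigma_{N}}_{\Sigma_{i}\times\Sigma_{j}}$ applied to a class supported on a single conjugacy class of $\Sigma_{i}\times\Sigma_{j}$ is again supported on the single factor of the product above indexed by the corresponding cycle type, where it is an ordinary induction of centralizer representation rings twisted by an $\mathbb{R}$-character. Hence $\mathcal{I}^{QEll}_{tr}=\prod_{\lambda}(\mathcal{I}^{QEll}_{tr})_{\lambda}$, and the problem reduces to identifying the quotient $R\Lambda_{\Sigma_{N}}(\sigma_{\lambda})/(\mathcal{I}^{QEll}_{tr})_{\lambda}$ factor by factor.

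The governing dichotomy is rectangular versus non-rectangular $\lambda$. If $\lambda$ has parts of at least two distinct sizes, pick a size $k$ with multiplicity $a_{k}$ and split $N=ka_{k}+(N-ka_{k})$ with both summands positive; the block-diagonal $\Sigma_{N}$-conjugate of $\sigma_{\lambda}$ then lies in $\Sigma_{ka_{k}}\times\Sigma_{N-ka_{k}}$ with the \emph{same} centralizer it has in $\Sigma_{N}$, so by Lemma~\ref{induequ} the corresponding transfer restricts to the identity on the $\lambda$-factor and $(\mathcal{I}^{QEll}_{tr})_{\lambda}$ is the whole factor. Thus modulo the transfer ideal only the rectangular partitions $\lambda=k^{N/k}$, $k\mid N$, survive, and they are in bijection with the ordered factorizations $N=de$ via $(d,e)=(N/k,k)$. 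For such a $\lambda$, with $a=N/k$, the only transfers reaching its factor come from the coloured Young subgroups $\mathbb{Z}/k\mathbb{Z}\wr(\Sigma_{b}\times\Sigma_{a-b})$ of $\mathbb{Z}/k\mathbb{Z}\wr\Sigma_{a}$, $0<b<a$, and the remaining task is to prove
\[
R\bigl(\Lambda^{k}_{\mathbb{Z}/k\mathbb{Z}}(g_{k})\wr_{\mathbb{T}}\Sigma_{a}\bigr)\big/\,(\text{ideal of these transfers})\;\cong\;\mathbb{Z}[q^{\pm}][q']/\langle q^{d}-q'^{e}\rangle,
\]
with $q'$ the restriction of $\mathbb{P}_{N}(q)=\boxtimes_{k}\boxtimes_{(i_{1}\cdots i_{k})}(q)_{k}$ from Example~\ref{pointsymmpower}.

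This last computation is the step I expect to be the main obstacle; it is a $q$-graded refinement of the classical fact that $R\Sigma_{a}$ modulo transfers from proper Young subgroups is $\mathbb{Z}$. Using the basis of $R\bigl(\Lambda^{k}_{\mathbb{Z}/k\mathbb{Z}}(g_{k})\wr_{\mathbb{T}}\Sigma_{a}\bigr)$ from Theorem~\ref{repfibwr}, indexed by $\mathbb{Z}/k\mathbb{Z}$-coloured partitions of $a$, I would show by Frobenius reciprocity and the projection formula that the transfers from the coloured Young subgroups generate exactly the $\mathbb{Z}[q^{\pm}]$-submodule spanned by all basis classes except the $e=k$ classes whose coloured partition is a single row of all $a$ boxes in a single colour; these $k$ surviving classes are precisely $1,q',\dots,q'^{\,e-1}$, so the quotient is $\mathbb{Z}[q^{\pm}]$-free of rank $e$ and is generated as an algebra by $q'$. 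The relation $q^{d}=q'^{e}$ is forced: $\boxtimes$ is the tensor product over $\mathbb{Z}[q^{\pm}]$ and $((q)_{k})^{\otimes k}=q$ by (\ref{lpok}), whence on this factor $(q')^{e}=\bigl((q)_{k}^{\otimes k}\bigr)^{\boxtimes a}=q^{\boxtimes a}=q^{d}$; the rank count shows there is no further relation. Assembling the factors gives (\ref{ellc}); by the analytic description of the isogenies of the Tate curve in Section~\ref{tatecurve}, the ring $\prod_{N=de}\mathbb{Z}[q^{\pm}][q']/\langle q^{d}-q'^{e}\rangle$ is exactly the one classifying its order-$N$ subgroups.

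For (\ref{tatec}) I would apply the flat functor $-\otimes_{\mathbb{Z}[q^{\pm}]}\mathbb{Z}((q))$ to (\ref{ellc}). By (\ref{tateqellequiv}) the left-hand side becomes $K^{0}_{Tate}(\mbox{pt}/\!\!/\Sigma_{N})$; the transfers of Tate K-theory are obtained from those of $QEll$ by the same base change (Section~\ref{inductionqell}) and $\mathbb{Z}((q))$ is flat over $\mathbb{Z}[q^{\pm}]$, so $\mathcal{I}^{QEll}_{tr}$ extends to $I^{Tate}_{tr}$; and the right-hand side becomes $\prod_{N=de}\mathbb{Z}((q))[q']/\langle q^{d}-q'^{e}\rangle$, with $q'$ now the image of $q$ under $P^{Tate}$ since $\mathbb{P}_{N}$ extends uniquely to $P^{Tate}$ under the identification (\ref{tateqellequiv}) (Remark~\ref{stringyrelation}). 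This yields (\ref{tatec}) and completes the proof.
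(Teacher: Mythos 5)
Your proposal follows the paper's overall strategy: conjugacy-class decomposition of $QEll(\mbox{pt}/\!\!/\Sigma_N)$, the same dichotomy (cycle types with parts of two or more distinct lengths are killed because $\Lambda_{\Sigma_N}(\sigma)=\Lambda_{\Sigma_i\times\Sigma_{N-i}}(\sigma)$, leaving only the rectangular ones with $d$ cycles of length $e$), the same identification of $\Lambda_{\Sigma_N}(\sigma)$ with a fibred wreath product, the same use of Theorem~\ref{repfibwr} for a $\mathbb{Z}[q^{\pm}]$-basis, the same identification of the surviving classes with $1,q',\dots,q'^{e-1}$ where $q'=\mathbb{P}_\sigma(q)$, and the same relation $q'^e=q^d$ forced by $(q)_e^{\,e}=q$. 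Your derivation of (\ref{tatec}) from (\ref{ellc}) by base change along $\mathbb{Z}[q^{\pm}]\to\mathbb{Z}((q))$ is in line with Remark~\ref{stringyrelation} and is essentially what the paper intends by ``the proof of (\ref{tatec}) is similar.''

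The genuine gap is in your final step. You assert that Frobenius reciprocity and the projection formula show that ``the transfers from the coloured Young subgroups generate exactly the $\mathbb{Z}[q^{\pm}]$-submodule spanned by all basis classes except the $e$ survivors,'' and then do a rank count. That literal claim is false, and the rank count does not follow from it. Already in the smallest non-trivial case $d=2$, $e=1$ (so $\sigma=e\in\Sigma_2$), the transfer from $\Lambda_{\Sigma_1\times\Sigma_1}(e)$ sends $1$ to $1+\mathrm{sign}$, not to $\mathrm{sign}$; so $\mathcal{I}^{QEll}_{tr}\cap R\Lambda_{\Sigma_2}(e)$ is $(1+\mathrm{sign})$, which is \emph{not} the $\mathbb{Z}[q^{\pm}]$-span of the non-surviving basis element $\mathrm{sign}$. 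The quotient still has rank $e=1$, but it is a quotient of the representation ring rather than a submodule spanned by the survivors, and you must rule out relations among the images of $1,q',\dots,q'^{e-1}$ introduced by elements of the ideal that have nonzero coefficients on the survivors. This is exactly what the paper's character/Vandermonde computation is for: the elements $[(a_1,\dots,a_d;\beta),t]$ with $\beta$ a full $d$-cycle are conjugate into no proper $\Lambda_{\Sigma_i\times\Sigma_{N-i}}(\sigma)$, so every element of the ideal has vanishing character on them; evaluating $\sum_j f_j(q)\,q'^j$ on these elements yields a nonsingular Vandermonde system, which forces $f_j=0$ for all $j$ and establishes that no extra relations appear. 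Some such argument (or an equivalent one, say via the fact that $R\Sigma_d/I_{tr}\cong\mathbb{Z}$ is known) is needed to finish; ``Frobenius reciprocity $+$ projection formula $+$ rank count'' as stated does not close it.
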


We show the proof of (\ref{ellc}). The proof of (\ref{tatec}) is
similar.

\begin{proof}[Proof of (\ref{ellc})]

We divide the elements in $\Sigma_N$ into two cases.

\textbf{Case I:}

The decomposition of $\sigma$ has cycles of different length. For
example, the element $$(1\mbox{  } 2)(3\mbox{  } 4)(5\mbox{  }
6)(7 \mbox{ }8\mbox{  } 9 \mbox{ }10)(11\mbox{ } 12\mbox{  }
13\mbox{ } 14)(15 \mbox{ }16\mbox{ } 17)\in\Sigma_{17}$$ is in
this case and $(1\mbox{   }2)(3\mbox{   }4)(5\mbox{   }6)$,
$(1\mbox{ }2\mbox{   }3\mbox{   }4\mbox{   }5)(6\mbox{   }7\mbox{
}8\mbox{ }9\mbox{   }10)$ are not.

Most elements in $\Sigma_N$ belong to Case I. $\sigma$ is not in
this case if and only if it consists of cycles of the same length,
such as $(1\mbox{  }2)(3\mbox{  }4)$, $(1\mbox{  }2\mbox{
}3)$, $1$, $(1\mbox{  }2\mbox{  }3)(4\mbox{  }5\mbox{  }6)$.

For those $\sigma$ that belong to Case I,
$\Lambda_{\Sigma_N}(\sigma)=\Lambda_{\Sigma_r\times\Sigma_{N-r}}(\sigma)$,
so
$Ind^{\Lambda_{\Sigma_N}(\sigma)}_{\Lambda_{\Sigma_r\times\Sigma_{N-r}}(\sigma)}$
is the identity map, so
$K_{\Lambda_{\Sigma_N}(\sigma)}(\mbox{pt})$ is equal to
$Ind^{\Lambda_{\Sigma_N}(\sigma)}_{\Lambda_{\Sigma_r\times\Sigma_{N-r}}(\sigma)}
K_{\Lambda_{\Sigma_r\times\Sigma_{N-r}}(\sigma)}(\mbox{pt})$.
Thus, the summand corresponding to $\sigma$ in
$QEll(\mbox{pt}/\!\!/\Sigma_N)$ is completely cancelled. 

\bigskip

\textbf{Case II:}

$\sigma$ consists of cycles of the same length. In other words, it
consists of $d$ $e-$cycles with $N=de$. 

The centralizer $C_{\Sigma_N}(\sigma)\cong C_e\wr \Sigma_d$, where
$C_e=\mathbb{Z}/e\mathbb{Z}$ is the cyclic group with order $e$.            
We have $$\Lambda_{\Sigma_N}(\sigma)\cong
\Lambda_{\Sigma_e}(12\cdots e)\wr_{\mathbb{T}}\Sigma_d$$ is the
subgroup of $\Lambda_{\Sigma_e}(12\cdots e)\wr\Sigma_d$ with
elements of the form $$([a_1,t], [a_2, t], \cdots [a_d, t];
\tau),\mbox{   with    }a_1, \cdots a_d\in C_e,\mbox{   } \tau\in
\Sigma_d, \mbox{  }t\in\mathbb{R}.$$
$K_{\Lambda_{\Sigma_N}(\sigma)}(\mbox{pt})$ is the representation
ring $R\Lambda_{\Sigma_N}(\sigma)$. According to Theorem
\ref{repfibwr}, as a $\mathbb{Z}[q^{\pm}]-$module, it has the
basis
\begin{align*}\{&Ind^{\Lambda_{\Sigma_e}(12\cdots
e)\wr_{\mathbb{T}}\Sigma_d}_{\Lambda_{\Sigma_e}(12\cdots
e)\wr_{\mathbb{T}}\Sigma_{(d)}}(q^{\frac{a_1}{e}})^{\otimes_{\mathbb{Z}[q^{\pm}]}d_1}\otimes_{\mathbb{Z}[q^{\pm}]}\cdots\otimes_{\mathbb{Z}[q^{\pm}]}(q^{\frac{a_r}{e}})^{\otimes_{\mathbb{Z}[q^{\pm}]}d_r}\otimes
D_{\tau}\mbox{ }|\\&(d)=(d_1, d_2, \cdots d_r)\mbox{ is a
partition of }d. \\ &a_1, a_2, \cdots a_r\mbox{ are in
 }\{0, 1, \cdots e-1\}. \mbox{     }\tau\in R\Sigma_{(d)}\mbox{  is  irreducible}.\}\end{align*}
where for each $a\in\mathbb{Z}$, $q^{\frac{a}{e}}:
\Lambda_{C_e}((12\cdots e))\longrightarrow U(1)$ is the map
\begin{equation}q^{\frac{a}{e}}([(12\cdots e)^j, t])=e^{2\pi
ia\frac{j+t}{e}}.\end{equation} Namely, it is the map $x_1^a$ in
the sense of Example \ref{ppex}. 

For each partition $(d)$ of $d$, if it has more than one cycle, 
$\Sigma_{(d)}$ is a subgroup of some $\Sigma_{d_1}\times
\Sigma_{d-d_1}$ for some positive integer $0<d_1<d$. So for each
$$Ind^{\Lambda_{\Sigma_e}(12\cdots
e)\wr_{\mathbb{T}}\Sigma_d}_{\Lambda_{\Sigma_e}(12\cdots
e)\wr_{\mathbb{T}}\Sigma_{(d)}}(q^{\frac{a_1}{e}})^{\otimes_{\mathbb{Z}[q^{\pm}]}d_1}\otimes_{\mathbb{Z}[q^{\pm}]}\cdots\otimes_{\mathbb{Z}[q^{\pm}]}(q^{\frac{a_r}{e}})^{\otimes_{\mathbb{Z}[q^{\pm}]}d_r}\otimes
D_{\tau}$$ with $r\geq 2$, it is equal to
\begin{align*}Ind^{\Lambda_{\Sigma_e}(12\cdots
e)\wr_{\mathbb{T}}\Sigma_d}_{\Lambda_{\Sigma_e}(12\cdots
e)\wr_{\mathbb{T}}(\Sigma_{d_1}\times\Sigma_{d-d_1})}&(Ind^{\Lambda_{\Sigma_e}(12\cdots
e)\wr_{\mathbb{T}}(\Sigma_{d_1}\times\Sigma_{d-d_1})}_{\Lambda_{\Sigma_e}(12\cdots
e)\wr_{\mathbb{T}}\Sigma_{(d)}}(q^{\frac{a_1}{e}})^{\otimes_{\mathbb{Z}[q^{\pm}]}d_1}\otimes_{\mathbb{Z}[q^{\pm}]}\\
&\cdots\otimes_{\mathbb{Z}[q^{\pm}]}(q^{\frac{a_r}{e}})^{\otimes_{\mathbb{Z}[q^{\pm}]}d_r}\otimes
D_{\tau})\end{align*} by the property of induced representation.
Note that $$\Lambda_{\Sigma_e}(12\cdots
e)\wr_{\mathbb{T}}(\Sigma_{d_1}\times\Sigma_{d-d_1})\cong
\Lambda_{\Sigma_{d_1e}\times\Sigma_{N-d_1e}}(\sigma).$$ So
$$Ind^{\Lambda_{\Sigma_e}(12\cdots
e)\wr_{\mathbb{T}}(\Sigma_{d_1}\times\Sigma_{d-d_1})}_{\Lambda_{\Sigma_e}(12\cdots
e)\wr_{\mathbb{T}}\Sigma_{(d)}}(q^{\frac{a_1}{e}})^{\otimes_{\mathbb{Z}[q^{\pm}]}d_1}\otimes_{\mathbb{Z}[q^{\pm}]}\cdots\otimes_{\mathbb{Z}[q^{\pm}]}(q^{\frac{a_r}{e}})^{\otimes_{\mathbb{Z}[q^{\pm}]}d_r}\otimes
D_{\tau}$$ is in
$K_{\Lambda_{\Sigma_{d_1e}\times\Sigma_{N-d_1e}}(\sigma)}(\mbox{pt})$,
Thus, each
base element with $r\geq 2$ is contained in the transfer ideal. 

When $r=1$, consider $$(q^{\frac{a_1}{e}})^{\otimes_{\mathbb{Z}[q^{\pm}]}d}\otimes D_{\tau}$$ with $\tau\in R\Sigma_d$. 
As indicated in Proposition 1.1 and Corollary 1.5 in \cite{Ati66},
each $\tau$, except the trivial representation of $\Sigma_d$, can
be induced from a representation $\tau'$ in some
$R(\Sigma_i\times\Sigma_{d-i})$ with $d>i>0$. 

 \textbf{Claim: }the representation $$Ind^{\Lambda_{\Sigma_e}(12\cdots
e)\wr_{\mathbb{T}}\Sigma_d}_{\Lambda_{\Sigma_{e}}(12\cdots
e)\wr_{\mathbb{T}}(\Sigma_i\times\Sigma_{d-i})}
(q^{\frac{a_1}{e}})^{\otimes_{\mathbb{Z}[q^{\pm}]}i}\otimes_{\mathbb{Z}[q^{\pm}]}
(q^{\frac{a_1}{e}})^{\otimes_{\mathbb{Z}[q^{\pm}]}(d-i)} \otimes
D_{\tau'}$$ is isomorphic to
$$(q^{\frac{a_1}{e}})^{\otimes_{\mathbb{Z}[q^{\pm}]}d}\otimes
D_{Ind^{\Sigma_d}_{\Sigma_i\times\Sigma_{d-i}}\tau'},$$ which is
$$(q^{\frac{a_1}{e}})^{\otimes_{\mathbb{Z}[q^{\pm}]}d}\otimes D_{\tau}.$$

To prove this, we consider a set
$\{\tau_{\alpha}\}_{\alpha\in\Sigma_d/\Sigma_i\times\Sigma_{d-i}}$
of coset representatives. Then $$\{\eta_{\alpha}:=(1, \cdots 1;
\tau_{\alpha})\}_{\alpha\in\Sigma_d/\Sigma_i\times\Sigma_{d-i}}$$
is a set of coset representatives of
$$\big(\Lambda_{\Sigma_e}(12\cdots e)\wr_{\mathbb{T}}\Sigma_d  \big) /\big( \Lambda_{\Sigma_{e}}(12\cdots e)\wr_{\mathbb{T}}(\Sigma_i\times\Sigma_{d-i})\big).$$

Let $W$ be a representation space of
$\Lambda_{\Sigma_{e}}(12\cdots
e)\wr_{\mathbb{T}}(\Sigma_i\times\Sigma_{d-i})$, Then
$$Ind^{\Lambda_{\Sigma_e}(12\cdots
e)\wr_{\mathbb{T}}\Sigma_d}_{\Lambda_{\Sigma_{e}}(12\cdots
e)\wr_{\mathbb{T}}(\Sigma_i\times\Sigma_{d-i})}W$$ is the direct
product of $[\Sigma_d :\Sigma_i\times\Sigma_{d-i}]$ copies of $W$.
For any element $$H=(g_1, \cdots g_d; \beta)\in
\Lambda_{\Sigma_e}(12\cdots e)\wr_{\mathbb{T}}\Sigma_d,$$ and each
$\alpha\in\Sigma_d/\Sigma_i\times\Sigma_{d-i}$, there is a unique
$\alpha'\in\Sigma_d/\Sigma_i\times\Sigma_{d-i}$ and a unique $$
J_{\alpha}=(g'_1, \cdots g'_d; \gamma_{\alpha})\in
\Lambda_{\Sigma_{e}}(12\cdots
e)\wr_{\mathbb{T}}(\Sigma_i\times\Sigma_{d-i})$$ such that
$H\eta_{\alpha}=\eta_{\alpha'}J_{\alpha}$. Note that $$g'_1,
\cdots g'_d$$ is a permutation of $$g_1, \cdots g_d.$$ So
$(q^{\frac{a_1}{e}})^{\otimes_{\mathbb{Z}[q^{\pm}]}d}(g'_1, \cdots
g'_d)= (q^{\frac{a_1}{e}})^{\otimes_{\mathbb{Z}[q^{\pm}]}d}(g_1,
\cdots g_d).$ In addition,
$\beta\tau_{\alpha}=\tau_{\alpha'}\gamma_{\alpha}$. Let
$$\prod_{\alpha}w_{\alpha}$$ be an element in
$$Ind^{\Lambda_{\Sigma_e}(12\cdots
e)\wr_{\mathbb{T}}\Sigma_d}_{\Lambda_{\Sigma_{e}}(12\cdots
e)\wr_{\mathbb{T}}(\Sigma_i\times\Sigma_{d-i})}W.$$ We have
\begin{align*}&\bigg(Ind^{\Lambda_{\Sigma_e}(12\cdots
e)\wr_{\mathbb{T}}\Sigma_d}_{\Lambda_{\Sigma_{e}}(12\cdots
e)\wr_{\mathbb{T}}(\Sigma_i\times\Sigma_{d-i})}
(q^{\frac{a_1}{e}})^{\otimes_{\mathbb{Z}[q^{\pm}]}i}\otimes_{\mathbb{Z}[q^{\pm}]}
(q^{\frac{a_1}{e}})^{\otimes_{\mathbb{Z}[q^{\pm}]}d-i}
\otimes D_{\tau'}\bigg)(H)(\prod_{\alpha}w_{\alpha})\\
&=\prod_{\alpha}J_{\alpha}w_{\beta(\alpha)}
=\prod_{\alpha}(q^{\frac{a_1}{e}})^{\otimes_{\mathbb{Z}[q^{\pm}]}d}(g_1, \cdots g_d)D_{\tau'}(1,\cdots 1;\gamma_{\alpha})(w_{\beta\alpha})\\
&=(q^{\frac{a_1}{e}})^{\otimes_{\mathbb{Z}[q^{\pm}]}d}(g_1, \cdots g_d)\prod_{\alpha}\tau'(\gamma_{\alpha})(w_{\beta\alpha})\\
&=(q^{\frac{a_1}{e}})^{\otimes_{\mathbb{Z}[q^{\pm}]}d}(g_1, \cdots g_d)(Ind^{\Sigma_d}_{\Sigma_i\times\Sigma_{d-i}}\tau')(\beta)(\prod_{\alpha}w_{\alpha})\\
&=(q^{\frac{a_1}{e}})^{\otimes_{\mathbb{Z}[q^{\pm}]}d}(g_1, \cdots g_d; \beta)D_{Ind^{\Sigma_d}_{\Sigma_i\times\Sigma_{d-i}}\tau'}(g_1, \cdots g_d; \beta)(\prod_{\alpha}w_{\alpha})\\
&= \big((q^{\frac{a_1}{e}})^{\otimes_{\mathbb{Z}[
q^{\pm}]}d}\otimes
D_{Ind^{\Sigma_d}_{\Sigma_i\times\Sigma_{d-i}}\tau'}\big)(g_1,
\cdots g_d; \beta)(\prod_{\alpha}w_{\alpha})\end{align*}

So the claim is proved.
\bigskip

Since $$\{Ind^{\Sigma_d}_{\Sigma_i\times\Sigma_{d-i}}\tau'\mbox{ }
|   \mbox{   }\tau'\in R(\Sigma_i\times\Sigma_{d-i})\mbox{ and }
i=1, 2, \cdots d-1.\}$$ contains all the irreducible
representation of $\Sigma_d$ except the trivial representation,
which is corresponding to the partition $(d)$,  thus, by the
claim, $K_{\Lambda_{\Sigma_N}(\sigma)}(\mbox{pt})$ modulo the
image of the transfer, is a $\mathbb{Z}[q^{\pm}]-$module generated
by the equivalent classes represented by
\begin{equation}\{((q^{\frac{a}{e}})^{\otimes_{\mathbb{Z}[q^{\pm}]}d})^{\sim}\mbox{  }|\mbox{  } a=0, 1, \cdots
e-1\}.\label{get}\end{equation}

For any $a$, $(q^{\frac{a}{e}})^{\otimes_{\mathbb{Z}[q^{\pm}]}d}$
is $(q^{\frac{1}{e}})^{\otimes_{\mathbb{Z}[q^{\pm}]}d}$ to the
$a-$th power. Note that, by (\ref{ee}),
$(q^{\frac{1}{e}})^{\otimes_{\mathbb{Z}[q^{\pm}]}d}$ is
$$q':=\mathbb{P}_{\sigma}(q).$$


\bigskip

To get the isomorphism (\ref{ellc}), consider a map $$\Psi:
\mathbb{Z}[q^{\pm}][x]\longrightarrow
K_{\Lambda_{\Sigma_N}(\sigma)}(\mbox{pt})/\mathcal{I}^{QEll}_{tr}$$
by sending $q$ to $q$ and $x$ to $q'$, which is a well-defined
$\mathbb{Z}[q^{\pm}]-$homomorphism.

Since $q'^e= q^d$,
$K_{\Lambda_{\Sigma_N}(\sigma)}(\mbox{pt})/\mathcal{I}^{QEll}_{tr}$
is a $\mathbb{Z}[q^{\pm}]-$module generated by $$1, q', \cdots
q'^{e-1}.$$ So any element in it can be expressed as
$$\sum_{j=0}^{e-1}f_j(q)q'^j$$ where each $f_j(q)$ is in the
polynomial ring $\mathbb{Z}[q^{\pm}]$. It is the image of
$$\sum_{j=0}^{e-1}f_j(q)x^j$$ in $\mathbb{Z}[q^{\pm}][x]$. So $\Psi$ is surjective.

Then we study its kernel. If $$F:=\sum_{j=0}^{e-1}f_j(q)q'^j$$
is in $\mathcal{I}^{QEll}_{tr}$, then it is in $\mathbb{Z}[q^{\pm}]$. 
So we can assume $F=0$.


For each element $[(a_1, \cdots a_d; \beta), t]$ in
$\Lambda_{\Sigma_N}(\sigma)$ with $(a_1, \cdots a_d; \beta)\in
C_{\Sigma_N}(\sigma),$
\begin{equation}q([(a_1, \cdots a_d;
\beta), t])=e^{2\pi it},\end{equation} 
\begin{equation}q'([(a_1, \cdots a_d; \beta),
t])=e^{\frac{2\pi i(a_1+\cdots a_d+dt)}{e}}.\end{equation}

\begin{align*}F([(a_1, \cdots a_d; \beta),
t])&=\sum_{j=0}^{e-1}f_j(q)q'^j([(a_1, \cdots a_d; \beta),
t])\\=\sum_{j=0}^{e-1}f_j(e^{2\pi it})e^{\frac{2\pi ij(a_1+\cdots
+a_d+dt)}{e}} &=\sum_{j=0}^{e-1}f_j(e^{2\pi it})e^{\frac{2\pi
ijdt}{e}}e^{\frac{2\pi ij(a_1+\cdots +a_d)}{e}}.\end{align*}

Let $$F_j(t): =f_j(e^{2\pi it})e^{\frac{2\pi ijdt}{e}}$$ be the
complex-valued function in the variable $t$. Let $\alpha$ denote
the number $e^{\frac{2\pi i}{e}}$. The integers
$$(a_1+\cdots +a_d)$$ go over $0, 1, \cdots e-1$. Consider the
$e$ equations
$$\sum_{j=0}^{e-1}F_j(t)\alpha^{jk}=0\mbox{, for }k=0, 1, \cdots e-1.$$

In other words, \[ \left( \begin{array}{ccccc}
1 & 1 & 1 &\cdots &1 \\
1 & \alpha & \alpha^2 &\cdots &\alpha^{e-1} \\
1 & \alpha^2 & \alpha^4 &\cdots &\alpha^{2(e-1)}\\
\vdots&\vdots&\vdots&&\vdots\\
1 &\alpha^{e-1} &\alpha^{2(e-1)} &\cdots &\alpha^{(e-1)^2}
\end{array} \right)\left(
\begin{array}{c}
F_0(t) \\
F_1(t) \\
F_2(t) \\
\vdots \\
F_{e-1}(t)\end{array} \right)=0\]

The determinant of the Vandermonde matrix \[\left(
\begin{array}{ccccc}
1 & 1 & 1 &\cdots &1 \\
1 & \alpha & \alpha^2 &\cdots &\alpha^{e-1} \\
1 & \alpha^2 & \alpha^4 &\cdots &\alpha^{2(e-1)}\\
\vdots&\vdots&\vdots&&\vdots\\
1 &\alpha^{e-1} &\alpha^{2(e-1)} &\cdots &\alpha^{(e-1)^2}
\end{array} \right)\] is \begin{equation}\prod_{j=0}^{e-2}\prod_{k=j+1}^{e-1}(\alpha^{k}-\alpha^{j}).\label{det}\end{equation}


When $\alpha =e^{\frac{2\pi i}{e}}$, each
$(\alpha^{k}-\alpha^{j})$ in the product (\ref{det}) is nonzero,
so for any $e$, the determinant is nonzero and the matrix is
non-singular. So we get $F_j(t)=0$ for any $t\in\mathbb{R}$ and
$j=0, 1, 2, \cdots e-1$.

So each $f_j(q)$ in $F$ is the zero polynomial.

The kernel of $\Psi$ is the ideal generated by $q'^e-q^d$.

\end{proof}

From the power operation of quasi-elliptic cohomology, we can
construct a new operation for quasi-elliptic cohomology.
\begin{proposition}The composition
\begin{align*}\overline{P}_N: &QEll_{G}(X) \buildrel{\mathbb{P}_N}\over\longrightarrow QEll_{G\wr\Sigma_N}(X^{\times N})
\buildrel{res}\over\longrightarrow QEll_{G\times
\Sigma_N}(X^{\times N})\\  &\buildrel{diag^*}\over\longrightarrow
 QEll_{G\times\Sigma_N}(X) \cong QEll_G(X)\otimes_{\mathbb{Z}[q^{\pm}]} QEll_{\Sigma_N}(\mbox{pt}) \\
  &\longrightarrow QEll_G(X)\otimes_{\mathbb{Z}[q^{\pm}]} QEll_{\Sigma_N}(\mbox{pt})/\mathcal{I}^{QEll}_{tr} \\ &\cong
 QEll_G(X)\otimes_{\mathbb{Z}[q^{\pm}]}\prod_{N=de}\mathbb{Z}[q^{\pm}][q']/\langle q^d-q'^e
\rangle\end{align*} defines a ring homomorphism, where $res$ is
the restriction map by the inclusion
$$G\times\Sigma_N\hookrightarrow G\wr\Sigma_N\mbox{,    } (g, \sigma)\mapsto (g, \cdots g;
\sigma),$$ $diag$ is the diagonal map $$X\longrightarrow X^{\times
N}\mbox{,    } x\mapsto (x, \cdots x)$$ and the last map is the
isomorphism (\ref{ellc}).\label{adamsqell}
\end{proposition}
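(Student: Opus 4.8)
The plan is to reduce the statement to the ring-homomorphism property of the map $x\mapsto diag^*\,res\,\mathbb{P}_N(x)\bmod\mathcal{I}^{QEll}_{tr}$, since the last arrow in the composite defining $\overline{P}_N$ is the ring isomorphism $(\ref{ellc})$ of Theorem~\ref{strickqell} tensored over $\mathbb{Z}[q^\pm]$ with $QEll_G(X)$. For that reduced map it suffices to check preservation of the unit, of products, and of sums; and since $QEll_G(X)$ is generated as a ring by classes of genuine vector bundles over $\Lambda(X/\!\!/G)$, one checks the first two on all of $QEll_G(X)$ and additivity on actual bundles, extending over differences in the Grothendieck group in the usual way. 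The argument runs parallel to the proofs that Atiyah's power operation \cite{Ati66} and Ganter's stringy operation (\cite{Gan07}, Section~5.4) descend to ring homomorphisms.

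Preservation of $1$ is immediate: by $(\ref{ee})$, $\mathbb{P}_{(\underline g,\sigma)}(1)=f^*_{(\underline g,\sigma)}\bigl(\boxtimes_k\boxtimes_{(i_1,\dots,i_k)}1\bigr)=1$, and $res$, $diag^*$, the K\"unneth isomorphism and the quotient map all carry $1$ to $1$. For multiplicativity, apply Theorem~\ref{main1p}(iv) with $H=G$ and $Y=X$, then pull back along the space diagonal $X\hookrightarrow X\times X$ and the group diagonal $G\hookrightarrow G\times G$: on both sides the external product $\boxtimes$ becomes the internal product of $QEll_G(X)$, and the naturality of $\mathbb{P}$ recorded after $(\ref{ee})$ together with Proposition~\ref{restrictionq} yields $diag^*\,res\,\mathbb{P}_N(xy)=(diag^*\,res\,\mathbb{P}_N(x))\cdot(diag^*\,res\,\mathbb{P}_N(y))$ already before the quotient.

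The crux is additivity modulo the transfer ideal. For actual bundles $x,y$, in the formula $(\ref{ee})$ the factor $\bigl((x\oplus y)_{g_{i_k}\cdots g_{i_1}}\bigr)_k$ splits as $(x_{g_{i_k}\cdots g_{i_1}})_k\oplus(y_{g_{i_k}\cdots g_{i_1}})_k$; distributing $\boxtimes$ over $\oplus$ and collecting the summands according to how many of the $N$ indices land on $x$ (weighted by cycle lengths) produces, after $res$ to $G\times\Sigma_N$ and $diag^*$, a Cartan-type identity of the shape
\[
diag^*\,res\,\mathbb{P}_N(x\oplus y)=\sum_{i+j=N}Ind^{\,G\times\Sigma_N}_{\,G\times(\Sigma_i\times\Sigma_j)}\bigl((diag^*\,res\,\mathbb{P}_i)(x)\boxtimes(diag^*\,res\,\mathbb{P}_j)(y)\bigr)
\]
in $QEll_{G\times\Sigma_N}(X)$, with $\mathbb{P}_0(-)=1$; this is the exponentiality property of the total power operation established in Section~\ref{s2}, and via Lemma~\ref{induequ} it reduces to the familiar decomposition of an induced representation of $G\wr\Sigma_N$. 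The $i=N$ and $i=0$ terms are $\overline{P}_N(x)$ and $\overline{P}_N(y)$, the inductions being trivial. For $0<i<N$, the summand lies, by the induction formula of Section~\ref{inductionqell}, in the image of the transfer $\mathcal{I}^{\,\Sigma_N}_{\Sigma_i\times\Sigma_j}$ with the $G$-factor carried along; since the isomorphism $QEll_{G\times\Sigma_N}(X)\cong QEll_G(X)\otimes_{\mathbb{Z}[q^\pm]}QEll_{\Sigma_N}(\mbox{pt})$ is compatible with transfers, it lands in $QEll_G(X)\otimes\mathcal{I}^{QEll}_{tr}$ and vanishes in the quotient. Hence $\overline{P}_N(x\oplus y)=\overline{P}_N(x)+\overline{P}_N(y)$, and this extends to all of $QEll_G(X)$.

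The main obstacle is precisely the displayed Cartan identity: one must push the rescaling functors $(\ )_k$, the isomorphisms $f_{(\underline g,\sigma)}$ of Section~\ref{FunctorF}, and the $\Lambda_{G\wr\Sigma_N}$-actions through the decomposition of $X^{\times N}$ into cycle blocks, and verify that the "mixed" blocks recombine exactly into the representations induced from the Young subgroups $\Sigma_i\times\Sigma_{N-i}$ — the same bookkeeping that underlies the proof of Theorem~\ref{main1p}(ii) and (iii). Once that identity is in hand, recognising the cross-terms as elements of $\mathcal{I}^{QEll}_{tr}$ is formal.
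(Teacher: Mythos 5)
Your proposal takes a genuinely different route from the paper's. The paper pushes a single element $V=\prod_{g}V_g$ through the explicit formula (\ref{ee}) and records that $\overline{P}_N(V)=\prod_{g\in G_{conj},\,N=de}V_{g^e,d}\otimes_{\mathbb{Z}[q^{\pm}]}q'_{d,e}$, where $V_{g^e,d}$ denotes the $\Sigma_d$-invariant part of $V_{g^e}^{\otimes d}$; it then verifies additivity and multiplicativity by asserting the identities $(V\oplus W)_{g^e,d}=V_{g^e,d}\oplus W_{g^e,d}$ and $(V\otimes W)_{g^e,d}=V_{g^e,d}\otimes W_{g^e,d}$ and computing with the explicit product. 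Your route is instead the Atiyah--Strickland one: set up a Cartan-type decomposition of $diag^*\,res\,\mathbb{P}_N(x\oplus y)$ into pieces induced from the Young subgroups $G\times(\Sigma_i\times\Sigma_j)$ and observe that the mixed terms $0<i<N$ land in $\mathcal{I}^{QEll}_{tr}$ by the construction of the transfer in Section \ref{inductionqell}, so that only the $i=N$ and $i=0$ terms survive the quotient. Your handling of the unit and of multiplicativity is sound; in particular, pulling Theorem \ref{main1p}(iv) back along the two diagonals is a clean way to convert the external-product statement into internal multiplicativity of $diag^*\,res\,\mathbb{P}_N$ before the quotient is even taken, which the paper does not do explicitly.

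The gap is in how you justify the Cartan identity itself. You attribute it to ``the exponentiality property of the total power operation established in Section \ref{s2},'' but the exponentiality axiom in Definition \ref{ng1} concerns disjoint unions of orbifolds, $P\colon E(X\sqcup Y)\to E(SX\times SY)$, and says nothing about Whitney sums of classes over a fixed $X$. What you actually need is the multinomial decomposition of $(x\oplus y)^{\boxtimes N}$ into inductions from $\Sigma_i\times\Sigma_j$, transported through $U^*$, the rescalings $(\ )_k^{\Lambda}$, the external product over the groupoid $d_{(\underline g,\sigma)}(X)$, and the isomorphisms $f_{(\underline g,\sigma)}$ of Section \ref{FunctorF}, with the induced representations identified using Lemma \ref{induequ} and the structure of the fibered wreath product from Theorem \ref{repfibwr}. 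You correctly flag this as the crux of the argument, but you leave it as an assertion rather than carrying it out, so as written the proposal is an outline of a valid strategy rather than a complete proof. Note also that this Cartan formula is not stated or proved anywhere in the paper, so citing Sections \ref{s2} or \ref{main1p} does not discharge it; it would need its own lemma.
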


\begin{proof}

Let $V=\prod\limits_{g\in G_{conj}}V_{g}\in QEll_G(X)$. Apply the
explicit formula of the power operation in (\ref{ee}), the
composition $diag^*\circ res\circ\mathbb{P}_N$ sends $V$ to
$$\prod_{\substack{g\in G_{conj} \\ \sigma\in
{\Sigma_N}_{conj}}} \otimes_k\otimes_{(i_1, \cdots
i_k)}V_{g^k}q^{\frac{1}{k}}$$ where $(i_1, \cdots i_k)$ goes over
all the $k-$cycles of $\sigma$, and the tensor products are those
of the $\mathbb{Z}[q^{\pm}]-$algebras. Then, as shown in the proof
of (\ref{ellc}), after taking the quotient by the transfer ideal
$\mathcal{I}^{QEll}_{tr}$, all the factors in $diag^*\circ
res\circ\mathbb{P}_N(V)$ are cancelled except those corresponding
to the elements in ${\Sigma_N}_{conj}$ with cycles of the same
length.  For the factor corresponding to the element $\sigma\in
{\Sigma_N}_{conj}$ with $d$ $e-$cycles and $de=N$, the nontrivial
part is $V_{g^e, d}\otimes_{\mathbb{Z}[q^{\pm}]} q'_{d, e}$ where
$V_{g^e, d}$ is the fixed point space of
$V_{g^e}^{\otimes_{\mathbb{Z}[q, q^{-1}]}d}$ by the permutations
$\Sigma_d$ and $q'_{d, e}= \mathbb{P}_{\sigma}(q) =
(q^{\frac{1}{e}})^{\otimes_{\mathbb{Z}[q, q^{-1}]}d}$.

Thus, \begin{equation}\overline{P}_N(V)= \prod\limits_{\substack{g\in G_{conj} \\
N=de}}V_{g^e, d}\otimes_{\mathbb{Z}[q^{\pm}]} q'_{d,
e}.\end{equation}

Let $V, W$ be two elements in $QEll_G(X)$. We have $$(V\oplus
W)_{g^e, d}=V_{g^e, d}\oplus W_{g^e, d}\mbox{   and    }(V\otimes
W)_{g^e, d}=V_{g^e, d}\otimes W_{g^e, d}.$$
\begin{align*}\overline{P}_N(V\oplus W)&=\prod\limits_{\substack{g\in G_{conj} \\ N=de}}
(V\oplus W)_{g^e, d}\otimes_{\mathbb{Z}[q^{\pm}]}
q'_{d, e}\\ 
&=\bigg( \prod\limits_{\substack{g\in G_{conj} \\
N=de}}V_{g^e, d}\otimes_{\mathbb{Z}[q^{\pm}]} q'_{d, e}\bigg)
\oplus
\bigg(\prod\limits_{\substack{g\in G_{conj} \\
N=de}} W_{g^e, d}\otimes_{\mathbb{Z}[q^{\pm}]} q'_{d, e}\bigg)
\\ &=\overline{P}_N(V)\oplus\overline{P}_N(W) .\end{align*}
Similarly, \begin{align*}\overline{P}_N(V\otimes
W)&=\prod\limits_{\substack{g\in G_{conj} \\ N=de}} (V\otimes
W)_{g^e, d}\otimes_{\mathbb{Z}[q^{\pm}]}
q'_{d, e}\\ 
&=\bigg( \prod\limits_{\substack{g\in G_{conj} \\
N=de}}V_{g^e, d}\otimes_{\mathbb{Z}[q^{\pm}]} q'_{d, e}\bigg)
\otimes
\bigg(\prod\limits_{\substack{g\in G_{conj} \\
N=de}} W_{g^e, d}\otimes_{\mathbb{Z}[q^{\pm}]} q'_{d, e}\bigg)
\\ &=\overline{P}_N(V)\otimes\overline{P}_N(W) .\end{align*}
\end{proof}


\bibliographystyle{amsplain}


\end{document}